\newcommand{\calU}{\mathcal{U}}
\newcommand{\sgn}{\operatorname{sgn}}
\def\wh{\widehat}
\def\wt{\widetilde}
\def\EE{\mathbb{E}}
\def\VV{\mathbb{V}}
\def\PP{\mathbb{P}}
\def\VV{\mathbb{V}}
\def\1{\mathbbm{1}}
\def\VV{\mathbb{V}}
\def\wh{\widehat}
\def\sumsum{\mathop{\sum\sum}}
\def\calF{\mathcal{F}}
\newtheorem{theorem}{Theorem}[section]
\newtheorem{lemma}[theorem]{Lemma}
\newtheorem{proposition}[theorem]{Proposition}
\newtheorem{corollary}[theorem]{Corollary}
\newtheorem{assumption}[theorem]{Assumption}
\newtheorem{definition}[theorem]{Definition}
\author[Zhao]{Yue Zhao}
\address{Yue Zhao. Department of Statistical Science, Cornell University}
\author[Wegkamp]{Marten H. Wegkamp}
\address{Marten H. Wegkamp. Department of Mathematics \&
Department of Statistical Science, Cornell University}
\title{Semiparametric Gaussian copula classification}
\begin{document}

\baselineskip=18pt

\begin{abstract}
\noindent
This paper studies the binary classification of two distributions with the same Gaussian copula in high dimensions.  Under this semiparametric Gaussian copula setting, we derive an accurate semiparametric estimator of the log density ratio, which leads to our empirical decision rule and a bound on its associated excess risk.  Our estimation procedure takes advantage of the potential sparsity as well as the low noise condition in the problem, which allows us to achieve faster convergence rate of the excess risk than is possible in the existing literature on semiparametric Gaussian copula classification.  We demonstrate the efficiency of our empirical decision rule by showing that the bound on the excess risk nearly achieves a convergence rate of $n^{-1/2}$ in the simple setting of Gaussian distribution classification.\\


\noindent
{\sc Keywords and phrases}: {Classification, Gaussian copula, kernel density estimation, linear discriminant analysis, semiparametric model}
\end{abstract}


\maketitle

\section{Introduction}
\label{sec:Introduction}

\subsection{Background}
\label{sec:background}

This paper studies the binary classification of semiparametric Gaussian copulas in high dimensions.  We first briefly review the general classification setting.  We assume throughout that the random vector $(X,Y)\in\mathbb{R}^d\times \{0,1\}$, with $X=(X_1,\dots,X_d)^T$, unless otherwise specified.  The goal of classification is to determine the value of the unobserved $Y$ based on an observed realization of $X$.  The optimal decision rule, namely the Bayes rule $\delta^*:\mathbb{R}^d\rightarrow\{0,1\}$, predicts $Y=1$ if and only if the logarithm of the ratio of densities of $(X|Y=0)$ to $(X|Y=1)$, $\log(f^1/f^0):\mathbb{R}^d\rightarrow\mathbb{R}$, at $X$ satisfies
\begin{align}
\log(f^1/f^0)(X) = \log \dfrac{f^1(X)}{f^0(X)} \ge 0, \nonumber
\end{align}
or equivalently if and only if $\eta(X)\ge 1/2$.  Here $f^y:\mathbb{R}^d\rightarrow\mathbb{R}$ is the multivariate density for the random vector $(X|Y=y)$, and $\eta:\mathbb{R}^d\rightarrow[0,1]$ defined as $\eta(x) = \PP(Y=1|X=x)$ is the regression function.  For simplicity here and throughout the paper we assume that $\PP(Y=0)=\PP(Y=1)=1/2$, and $\mathbb{R}$ denotes the extended real number line.

In practice, the Bayes rule is unavailable to us.  Instead, we have at our disposal a \text{training set} $\left\{(X^i,Y^i),1\le i\le n\right\}$ such that each $(X^i,Y^i)$ is an independent copy of $(X,Y)$.  From the training set, we wish to construct an efficient empirical decision rule $\wh\delta_n:\mathbb{R}^d\rightarrow\{0,1\}$.  In this paper our construction of $\wh\delta_n$ will be based on an estimator $\wh{\log(f^0/f^1)}$ of $\log(f^0/f^1)$ such that the rule $\wh \delta_n$ predicts $1$ at $X$ if and only if $\wh{\log(f^0/f^1)}(X)\ge 0$.

One of the most popular classification methods is linear discriminant analysis (LDA).  Here we first consider this method in Gaussian distribution classification.  Suppose the random vector $(X,Y)$ satisfies $(X|Y=0)\sim N(\mu_0,\Sigma)$ and $(X|Y=1)\sim N(\mu_1,\Sigma)$, for the mean vectors $\mu_0,\mu_1\in\mathbb{R}^d$ and the common covariance matrix $\Sigma\in\mathbb{R}^{d\times d}$.  In this case, the Bayes LDA rule predicts $Y=1$ if and only if $(X-\mu)^T \Omega\mu_d\ge 0$; here $\mu=(\mu_0+\mu_1)/2$, $\mu_d=\mu_1-\mu_0$, and $\Omega$ is the precision matrix, i.e., $\Omega=\Sigma^{-1}$.  Then, in the traditional fixed $d$ setting, the classical empirical LDA rule, or Fisher's rule, makes prediction by replacing $\mu_0$, $\mu_1$ and $\Omega$ in the Bayes rule with their empirical versions $\wh\mu_0$, $\wh\mu_1$ and $(\wh\Sigma)^{-1}$ respectively, and this rule has been well studied \cite{McLachlan92}.


In the high dimensional setting when $d\gtrsim n$, it is well known that the classical empirical LDA rule often performs poorly without additional assumptions \cite{Bickel04,Shao11}.  Considerable progress has been made toward devising efficient empirical LDA rules in the high dimensional setting, typically by exploiting the potential sparsity in the problem, typically by assuming that $\Omega\mu_d\in\mathbb{R}^d$ is sparse \cite{Cai11,Clemmensen11,Fan12,KolarLiu13,Mai12,Shao11}.  In an orthogonal research direction, the traditional LDA under the Gaussian setting has been extended to tackle non-Gaussian distributions in the semiparametric LDA (SeLDA) model \cite{Lin03}.  More recently, the two aforementioned directions have been combined to further extend the LDA to classify non-Gaussian distributions in high dimensions by exploiting sparsity in the SeLDA model \cite{Han13,Mai13}.

Because the framework of SeLDA is closely related to our study in this paper, we will describe it in some details.  As in \cite{Lin03}, the SeLDA model assumes that there exists a $d$-variate transformation function $\alpha=(\alpha_1,\dots,\alpha_d)^T:\mathbb{R}^d\rightarrow\mathbb{R}^d$ that is strictly increasing (i.e., each univariate component $\alpha_i:\mathbb{R}\rightarrow\mathbb{R}$, for $i\in\{1,\dots,d\}$, is a strictly increasing function), such that $(\alpha(X)|Y=0)\sim N(\mu_0,\Sigma)$ and $(\alpha(X)|Y=1)\sim N(\mu_1,\Sigma)$ for some mean vectors $\mu_0,\mu_1\in\mathbb{R}^d$ and the common covariance matrix $\Sigma\in\mathbb{R}^{d\times d}$.  Here we use the convention that, for a vector $x=(x_1,\dots,x_d)^T\in\mathbb{R}^d$, $\alpha(x) = (\alpha_1(x_1),\dots,\alpha_d(x_d))^T$.  Then, as a natural generalization of the Bayes LDA rule under the traditional Gaussian setting, the Bayes rule under the SeLDA model predicts $Y=1$ if and only if $\left(\alpha(X)-\mu\right)^T \Omega\mu_d\ge 0$, with the same definitions for $\mu$, $\mu_d$ and $\Omega$ as described earlier.  Then, an efficient empirical decision rule under the SeLDA model is derived by replacing the unknown quantities in the Bayes rule, namely $\alpha$, $\mu$ and $\Omega\mu_d$, by their accurate estimates.  We emphasize that, under the SeLDA model, the transformation function $\alpha$ is required to be the same independent of the value of $Y$, because when classifying a new observation we have no prior knowledge of the value of $Y$ (which is what we would like to predict in a classification problem).

Because under the SeLDA model, $(X|Y=0)$ and $(X|Y=1)$ have the same Gaussian copula, SeLDA can also be regarded as an special instance of the \textit{semiparametric Gaussian copula classification problem}, or simply the Gaussian copula classification problem, which we define as classifying two distributions whose dependence structures are described by the same Gaussian copula but whose marginals are not explicitly specified.






\subsection{Limitation of the existing method}

Even though the SeLDA model is an instance of the Gaussian copula classification problem, it is in fact applicable only to a quite restrictive collection of distributions on $(X,Y)$ such that $(X|Y=y)$, $y\in\{0,1\}$ have the same Gaussian copula.  The assumption of SeLDA that the transformation function $\alpha$ must be the same independent of the class $y\in\{0,1\}$ already implies that some restriction must exist between the marginals of $(X|Y=0)$ and $(X|Y=1)$.  Here we show that the implied restriction is quite strong, perhaps even unnatural.  For simplicity, we assume here that $d=1$.  Then, the assumption of the SeLDA model states that there exists a strictly increasing univariate function $\alpha$ such that $(\alpha(X)|Y=y)\sim N(\mu_y,\sigma^2)$ for $y\in\{0,1\}$, which implies that $(\alpha(X)-\mu_y|Y=y)\sim N(0,\sigma^2)$ for $y\in\{0,1\}$.  Hence, recalling that $\alpha$ is strictly increasing, we derive the following relationship between the distributions of $(X|Y=0)$ and $(X|Y=1)$: for an arbitrary $t\in\mathbb{R}$, we have
\begin{align}
\PP(X\le t|Y=0) &= \PP(\alpha(X)\le \alpha(t)|Y=0 ) = \PP\left(\left.\alpha(X)-\mu_0\le\alpha(t)-\mu_0\right\vert Y=0\right) \nonumber \\
&=\PP\left(\left.\alpha(X)-\mu_1\le\alpha(t)-\mu_0\right\vert Y=1\right) = \PP\left(\left. \alpha(X)-\mu_1+\mu_0 \le \alpha(t) \right\vert Y=1\right) \nonumber \\
&= \PP(\alpha^{-1}(\alpha(X)-\mu_1+\mu_0)\le t|Y=1 ). \nonumber
\end{align}
Hence, SeLDA imposes a rather bizarre requirement that $(X|Y=0)$ and $(\alpha^{-1}(\alpha(X)-\mu_1+\mu_0)|Y=1)$ must have the same distribution.  This requirement would become more interpretable if the function $\alpha$ satisfies $\alpha^{-1}(\alpha(t)-\mu_1+\mu_0) = t-\mu_1+\mu_0$ for all $t\in\mathbb{R}$, which would imply that the random variables $(X|Y=0)$ and $(X|Y=1)$ are a constant shift $\mu_1-\mu_0$ from each other.  However, this is typically not the case unless $\alpha$ is the identity function, but then we simply revert back to the traditional case of classifying two Gaussian distributions with the same variance.  To put it somewhat differently, as one example of the strong restriction it places on the distribution of $(X,Y)$, SeLDA typically cannot accommodate the very natural scenario where $(X|Y=0)$ and $(X|Y=1)$ are a constant shift from each other, unless $(X|Y=0)$ and $(X|Y=1)$ are already normally distributed.


\subsection{Proposed research}

In this paper, we study the classification of two random vectors $(X|Y=0), (X|Y=1)\in\mathbb{R}^{d}$ that have the same Gaussian copula but that are otherwise completely arbitrary (except for certain regularity conditions) --- in short, we allow each class $y\in\{0,1\}$ to have their own transformation function $\alpha_y$ --- and develop a genuine and efficient Gaussian copula classification method in high dimensions.  We will make the blanket assumption that $(X|Y=0)$ and $(X|Y=1)$ have continuous marginals, and the Gaussian copula characterizing $(X|Y=0)$ and $(X|Y=1)$ has copula correlation matrix $\Sigma$.


As the starting point of our study, and also to describe our general strategy, we derive in this section the explicit from of the log density ratio $\log(f^0/f^1)$, which directly translates into an explicit Bayes rule for the Gaussian copula classification problem.  For the rest of the paper, we will construct a precise estimator of this ratio to establish an efficient corresponding empirical rule.

In the following we let $i\in\{1,\dots,d\}$, $y\in\{0,1\}$, $t\in\mathbb{R}$ and $x=(x_1,\dots,x_d)^T\in\mathbb{R}^d$.  Throughout the paper, we let $F_{i|y}$ and $f_{i|y}$ be, respectively, the distribution function and the density function of the $i$th coordinate for class $y$, and let $F_i = \left(F_{i|0} + F_{i|1}\right)/2$ and $f_i = \left(f_{i|0} + f_{i|1}\right)/2$ be, respectively, the marginal distribution function and the marginal density function of the $i$th coordinate (when $\PP(Y=0)=\PP(Y=1)=1/2$ as we are assuming).  We let $\Phi$ be the distribution function and $\Phi^{-1}$ the quantile function of $N(0,1)$.  We let the function $\alpha_{i|y}:\mathbb{R}\rightarrow\mathbb{R}$ be
\begin{align}
\alpha_{i|y}(t) = \Phi^{-1} (F_{i|y}(t)),
\label{def_alpha_y_i}
\end{align}
and we let the function $\alpha_y:\mathbb{R}^d\rightarrow\mathbb{R}^d$ be
\begin{align}
\alpha_y(x) &= \left( \alpha_{1|y}(x_1),\dots,\alpha_{d|y}(x_d) \right)^T.
\label{eq:def_alpha_x}
\end{align}
Then, we let the function $\Delta\alpha = \left( \Delta\alpha_1, \cdots, \Delta\alpha_d \right)^T:\mathbb{R}^d\rightarrow\mathbb{R}^d$ be
\begin{align}
\Delta\alpha(x) &= \alpha_0(x) - \alpha_1(x) = \left( \alpha_{1|0}(x_1)-\alpha_{1|1}(x_1),\dots,\alpha_{d|0}(x_d)-\alpha_{d|1}(x_d) \right)^T \nonumber \\
&= \left( \Delta\alpha_1(x_1), \cdots, \Delta\alpha_d(x_d) \right)^T,
\label{eq:def_Delta_alpha_x}
\intertext{and the function $\Delta\log f = (\Delta\log f_1,\dots,\Delta\log f_d)^T:\mathbb{R}^d\rightarrow\mathbb{R}^d$ be}
\Delta\log f(x) & = \left( \log f_{1|0}(x_1) - \log f_{1|1}(x_1), \cdots, \log f_{d|0}(x_d) - \log f_{d|1}(x_d) \right)^T \nonumber \\
&= (\Delta\log f_1(x_1), \cdots, \Delta\log f_d(x_d) )^T.
\label{eq:def_Delta_log_f_x}
\end{align}

We state in Theorem~\ref{thm:Gaussian_copula_density_ratio} the explicit from of the log density ratio $\log(f^0/f^1)$.
\begin{theorem}
\label{thm:Gaussian_copula_density_ratio}
For all $x\in\mathbb{R}$, we have
\begin{align}
\log(f^0/f^1)(x) &= -\dfrac{1}{2} (\alpha_0(x)+ \alpha_1(x) )^T \left( \Omega - I_d \right) ( \alpha_0(x) - \alpha_1(x)) + \sum_{i=1}^d \log\dfrac{ f_{i|0}( x_i) }{ f_{i|1}( x_i) } \nonumber \\
&= -\dfrac{1}{2} (\alpha_0(x) + \alpha_1(x) )^T \beta^*(x) + \sum_{i=1}^d \Delta\log f_i(x_i).
\label{eq:thm_density_ratio}
\end{align}
Here, $I_d$ denotes the $d\times d$ identity matrix, and for brevity (and analogous to the notation of \cite{Cai11}), we define the function $\beta^*=(\beta_1^*,\dots,\beta_d^*)^T=(\Omega-I_d)\Delta\alpha:\mathbb{R}^d\rightarrow\mathbb{R}^d$ as, for $x\in\mathbb{R}^d$,
\begin{align}
\beta^*(x)  = (\beta^*_1(x),\dots,\beta^*_d(x))^T = \left( \Omega - I_d \right) \Delta\alpha(x).
\label{eq:def_beta_star_x}
\end{align}
\end{theorem}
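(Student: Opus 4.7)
The plan is to compute $\log(f^0/f^1)$ directly by writing each class-conditional density $f^y$ as the product of its marginals with the common Gaussian copula density, and then simplifying the quadratic form that arises in the difference.

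First I would invoke Sklar's theorem: because $(X|Y=y)$ has continuous marginals $F_{i|y}$ and a Gaussian copula with correlation matrix $\Sigma$, its joint density factors as
\begin{align}
f^y(x) = c\bigl(F_{1|y}(x_1),\dots,F_{d|y}(x_d);\Sigma\bigr)\prod_{i=1}^d f_{i|y}(x_i),\nonumber
\end{align}
where $c(\cdot;\Sigma)$ is the Gaussian copula density. A short computation (change of variables $z_i=\Phi^{-1}(u_i)$ in the $N(0,\Sigma)$ density) gives
\begin{align}
c(u_1,\dots,u_d;\Sigma) = |\Sigma|^{-1/2}\exp\!\left(-\tfrac{1}{2}z^T(\Omega-I_d)z\right), \qquad z_i=\Phi^{-1}(u_i).\nonumber
\end{align}
Substituting $u_i=F_{i|y}(x_i)$ so that $z_i=\alpha_{i|y}(x_i)$, and recalling the vector notation \eqref{eq:def_alpha_x}, I obtain
\begin{align}
f^y(x) = |\Sigma|^{-1/2}\exp\!\left(-\tfrac{1}{2}\alpha_y(x)^T(\Omega-I_d)\alpha_y(x)\right)\prod_{i=1}^d f_{i|y}(x_i).\nonumber
\end{align}

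Second, I would take logarithms and subtract. The $|\Sigma|^{-1/2}$ factors cancel, yielding
\begin{align}
\log(f^0/f^1)(x) = -\tfrac{1}{2}\bigl[\alpha_0(x)^T(\Omega-I_d)\alpha_0(x) - \alpha_1(x)^T(\Omega-I_d)\alpha_1(x)\bigr] + \sum_{i=1}^d \log\frac{f_{i|0}(x_i)}{f_{i|1}(x_i)}.\nonumber
\end{align}

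Third, I would apply the symmetric-matrix identity $a^T M a - b^T M b = (a+b)^T M(a-b)$ with $M=\Omega-I_d$, $a=\alpha_0(x)$, $b=\alpha_1(x)$, to rewrite the quadratic part as $-\tfrac12(\alpha_0(x)+\alpha_1(x))^T(\Omega-I_d)(\alpha_0(x)-\alpha_1(x))$. Recognizing $\alpha_0(x)-\alpha_1(x)=\Delta\alpha(x)$ from \eqref{eq:def_Delta_alpha_x} and invoking the definition \eqref{eq:def_beta_star_x} of $\beta^*(x)=(\Omega-I_d)\Delta\alpha(x)$ produces the second displayed form, while the sum of log-marginal-ratios is by definition $\sum_{i=1}^d \Delta\log f_i(x_i)$ from \eqref{eq:def_Delta_log_f_x}.

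There is no real obstacle here: the result is an essentially formal computation, so the only care needed is in correctly identifying the Gaussian copula density (ensuring the $I_d$ subtraction in the exponent, which comes from dividing the $N(0,\Sigma)$ density by the product of standard normal marginals) and in handling the symmetric-matrix identity so that the $(\alpha_0+\alpha_1)$ factor appears with the correct sign.
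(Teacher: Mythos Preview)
Your proposal is correct and follows essentially the same route as the paper: both obtain the class-conditional density in the form $f^y(x)=|\Sigma|^{-1/2}\exp\bigl(-\tfrac12\alpha_y(x)^T(\Omega-I_d)\alpha_y(x)\bigr)\prod_i f_{i|y}(x_i)$ and then subtract logs, using the symmetric-matrix identity to collapse the quadratic difference. The only cosmetic difference is that the paper derives this density formula from scratch by differentiating $\Phi_\Sigma(\alpha_y(x))$ and simplifying the Jacobian factors $\prod_i f_{i|y}(x_i)/\phi(\alpha_{i|y}(x_i))$, whereas you invoke Sklar's theorem and the known Gaussian copula density directly; these are the same computation packaged differently.
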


\begin{proof}[Proof of Theorem~\ref{thm:Gaussian_copula_density_ratio}]
The proof can be found in Section~\ref{sec:proof_thm:Gaussian_copula_density_ratio}.
\end{proof}

It is clear from Equation~(\ref{eq:thm_density_ratio}) in Theorem~\ref{thm:Gaussian_copula_density_ratio} that the log density ratio $\log(f^0/f^1)$ at $x$ is decomposed as the sum of the term
\begin{align}
\left[(\alpha_0 + \alpha_1)^T \beta^*\right](x),
\label{eq:copula_part}
\end{align}
which we refer to as the copula part, and the term
\begin{align}
\sum_{i=1}^d \Delta\log f_i(x_i),
\label{eq:naive_Bayes_part}
\end{align}
which we refer to as the naive Bayes part.  Note that the copula part and the naive Bayes part are thus named because the former arises from the particular multivariate dependence structure described by the Gaussian copula, while the latter would arise even in the case of the classification of two multivariate distributions each with independent individual coordinates.  The estimation of the copula part and the naive Bayes part will involve different techniques.  Thus we will derive their estimators separately; in particular, we will derive the deviation properties of these estimator.


The estimators of the copula part and the naive Bayes part combined yield our semiparametric estimator $\wh {\log(f^0/f^1)}$ of the log density ratio $\log(f^0/f^1)$, which directly translates into our empirical decision rule $\wh\delta_n$.  By the aforementioned deviation properties, we can straightforwardly calculate the main result of our paper, a bound on the \textit{excess risk}
\begin{align}
\PP(\wh\delta_n(X)\neq Y)-\PP(\delta^*(X)\neq Y)
\label{eq:excess_risk}
\end{align}
associated with the empirical decision rule $\wh\delta_n$.  In words, the excess risk, which is a canonical benchmark for evaluating the efficiency of a decision rule, is the probability of misclassification associated with the empirical rule $\wh\delta_n$ in excess of that associated with the optimal Bayes rule $\delta^*$.  Moreover, by the same reason, we can easily incorporate in the excess risk calculation the \textit{margin assumption} (i.e., ``low noise'' condition) to take advantage of the potential low noise condition in the problem, which allows us to achieve faster convergence rate of the excess risk than is possible in the existing literature on Gaussian copula classification.

We will allow the dimension $d$ and certain other parameters (to be specified in more details throughout the paper) to grow with the sample size $n$.  To avoid error accumulation in high dimensions, throughout our studies, we will present explicit procedures that take advantage of the potential sparsity present in the problem, in particular the joint sparsity of $\Delta\alpha$ and $\Omega-I_d$ for the copula part, and the sparsity of $\Delta\log f$ in the naive Bayes part.

To demonstrate the efficiency of our empirical decision rule $\wh\delta_n$, we calculate the particular bound on excess risk that we achieve in the simple case of classifying two Gaussian distributions with common covariance, and show that our empirical decision rule nearly achieves the rate of $n^{-1/2}$ (with dimension $d$, sparsity indices, etc., all fixed).  (Of course, this simple case is more specifically and efficiently tackled by several well-developed high-dimensional LDA methods.  Our aim here is not to compete with these methods, but only to demonstrate the convergence rate, in particular with respect to $n$, of our method in this case.)


\subsection{Outline of the paper}

To facilitate presentation, we collect in Section~\ref{sec:risk_bound} the major ingredients of our paper.  First, Section~\ref{sec:sparsity} describes the types of sparsity that we exploit in our Gaussian copula classification framework.  Then, Section~\ref{sec:main_result_copula_part} describes the estimation procedure for the copula part and Section~\ref{sec:main_result_Bayes_part} describes the estimation procedure for the naive Bayes part.  Then, Section~\ref{sec:main_result_decision_rule} describes the feature of the resultant empirical decision rule $\wh\delta_n$, and presents the aforementioned main result of the paper, a bound on the excess risk associated with the rule $\wh\delta_n$, in Theorem~\ref{thm:excess_risk}.  Section~\ref{sec:case_study} presents the particular bound on excess risk we achieve when classifying two Gaussian distributions with common covariance.  More detailed, ``step-by-step'' studies of the estimation of the copula part and the naive Bayes part are deferred to Sections~\ref{sec_copula} and \ref{sec_naive_bayes} respectively.

For brevity of presentation, we defer the detailed proofs for Sections~\ref{sec:Introduction}, \ref{sec:risk_bound}, \ref{sec_copula} and \ref{sec_naive_bayes} to Sections~\ref{sec:proof_for_sec:Introduction}, \ref{sec:proof_for_sec:risk_bound}, \ref{sec:proof_for_sec_copula}, \ref{sec:proof_for_sec_naive_bayes} respectively.


\subsection{Conventions and notations}

For brevity of presentation, we assume that we have $n$ independent copies $X^{y,j}=(X^{y,j}_1,\dots,X^{y,j}_d)^T\in\mathbb{R}^d$, $j\in\{1,\dots,n\}$, of $(X|Y=y)$ for each class $y\in\{0,1\}$.  We can easily accommodate unequal sample sizes for the two classes.

For any vector $v$, we will use $[v]_k$ to denote its $k$th element, and for any matrix $A$, we will use $[A]_{k\ell}$ to denote the $k,\ell$th element of $A$, and $[A]_{k\cdot}$ to denote the $k$th row of $A$.  For matrices, we let $\|\cdot\|_q$ denote the induced $q$-matrix norm, i.e., $\|A\|_q = \sup_{\|v\|_{\ell_q}=1}\|Av\|_{\ell_q}$, and let $\|A\|_{\max} =\max_{i,j} |[A]_{i,j}|$; in particular, $\|A\|_{\infty}$ is the maximum row sum of the matrix $A$.  We let $\lambda_{\max}(\cdot)$ denote the largest eigenvalue of the argument.  Typically, we let $t\in\mathbb{R}$ , and $x\in\mathbb{R}^d$.  We let $I_d$ denote the $d\times d$ identity matrix.

We let $Z$ denote a standard normal random variable.  As stated earlier, $\Phi$ and $\Phi^{-1}$ denote the distribution function and the quantile function of $Z$.  We let $\phi$ denote the probability density functions of $Z$, and $\Phi_{\mu}$ and $\Phi_{\mu}^{-1}$ denote the distribution function and the quantile function of $Z+\mu$ respectively.  We note the basic fact that $\Phi_{\mu}^{-1}(\cdot) = \Phi^{-1}(\cdot)+\mu$.

For any absolute (i.e., numerical) constant $a$, we let $a^+$ denote an arbitrary but throughout the paper fixed absolute constant that is strictly greater than $a$.  We let $C$ denote a constant whose value may change from line to line of even within the same line, but is always an absolute constant that doesn't depend on any parameter in the problem (e.g., sample size, dimension, sparsity indices, locations $r\in\mathbb{R}, x\in\mathbb{R}^d$), unless otherwise specified.  We let $C$ and $J$ with subscripts denote constants with particular chosen values.


\section{Construction and performance summary of the empirical decision rule $\wh\delta_n$}
\label{sec:risk_bound}

\subsection{Exploiting potential sparsity in the problem}
\label{sec:sparsity}

In this section, we describe the types of sparsity we exploit in our Gaussian copula classification framework.

We first focus on the copula part as defined in (\ref{eq:copula_part}).  As can be seen from (\ref{eq:copula_part}), because the vector-valued output of the function $\alpha_0+\alpha_1$ is clearly non-sparse and is monotone in $x$, the potential sparsity in the copula part should come from $\beta^*$.  Instead of directly exploiting the sparsity induced by $\beta^*$, however, we aim to study the following sparsity sets and indices induced by the function $|\Omega-I_d| |\Delta\alpha(x)|$: for $x\in\mathbb{R}^d$, we let
\begin{align}
\label{eq:def_bar_S_x}
S_x'& = \{i: |[\Omega-I_d]_{i\cdot}| |\Delta\alpha(x)| \neq 0\}, \quad s_x'= |S_x'|.
\end{align}
Here and throughout the paper, $|\cdot|$ with vector or matrix as argument returns the absolute value component-wise, and with set as argument returns cardinality.  In words, $i\in S_x'$ if and only if the two vectors $[\Omega-I_d]_{i\cdot}^T$ and $\Delta\alpha(x)$ have some overlapping nonzero components.  Then, estimating the sparsity set $S_x'$ becomes equivalent to estimating the sparsity patterns of $\Omega-I_d$ and $\Delta\alpha(x)$ separately.

One may be curious why we do not exploit the sparsity directly induced by the function $\beta^*$, namely the sparsity represented by the following sparsity sets and indices: for $x\in\mathbb{R}^d$,
\begin{align}
\label{eq:def_S_x}
S_x =\{i:\beta_i^*(x) = [\Omega-I_d]_{i\cdot} \Delta\alpha(x) \neq 0\}, \quad s_x=|S_x|;
\end{align}
note that $S_x\subset S_x'$ for all $x\in\mathbb{R}^d$.  We provide motivation for our choice here.  A sparsity pattern analogous to that represented by (\ref{eq:def_S_x}), namely the sparsity of the vector $\Omega\mu_d$ as described in Section~\ref{sec:background}, is indeed commonly exploited when classifying two Gaussian distributions $(X|Y=y)\sim N(\mu_y,\Sigma)$, $y\in\{0,1\}$ in high dimensions (e.g., see \cite{Cai11}).  To contrast this setting and in particular the sparsity pattern of $\Omega\mu_d$ to our Gaussian copula classification framework, here we briefly consider Gaussian distribution classification.  For simplicity we first assume that all the diagonal elements of $\Sigma$ are equal to one.  In this case, $\Delta\alpha=\Delta\alpha(x)$ is a constant function equal to $\mu_d=\mu_1-\mu_0$ for all $x\in\mathbb{R}^d$.  Then, the sparsity pattern analogous to that represented by (\ref{eq:def_S_x}) is the sparsity of the \textit{constant} vector $\Omega\Delta\alpha=\Omega\mu_d$, which prominently appears in the Bayes LDA rule.

The rationale behind exploiting the sparsity of the vector $\Omega\mu_d$, instead of the separate sparsity patterns of $\Omega$ and $\mu_d$, is that the $i$th component of the vector $\Omega\mu_d$, namely $[\Omega]_{i\cdot}^T\mu_d$, can be zero even if the vectors $[\Omega]_{i\cdot}^T$ and $\mu_d$ have overlapping nonzero components, if the latter two vectors are orthogonal.  However, this rationale is largely lost in our more general Gaussian copula classification framework.  Here, typically, $\Delta\alpha(X)$ is a continuous, rather than a constant, random vector (and the nonzero components of $\Delta\alpha(X)$ are typically not constant scalings of each other).  As such, up to an event of probability zero, the event on which $\Delta\alpha(X)$ is orthogonal to the constant vector $[\Omega-I_d]_{i\cdot}^T$ is equal to the event on which $\Delta\alpha(X)$ and $[\Omega-I_d]_{i\cdot}^T$ have no overlapping nonzero components.  Equivalently, $S_X'=S_X$ with probability one.  For illustration, we provide a simple but extreme example.  We again consider classifying two Gaussian distributions $(X|Y=y)\sim N(\mu_y,\Sigma_y)$, but this time we assume that $\Sigma_0$ has all diagonal elements equal to one, but $\Sigma_1=a^2\Sigma_0$ for $a\neq 1$ (which results in a \textit{quadratic discriminant analysis} problem, and which in this particular instance still falls under our Gaussian copula classification framework because $(X|Y=y)$, $y\in\{0,1\}$ still have the same Gaussian copula).  Then, the $X$-dependent component of $\Delta\alpha(X)$ becomes $(1-1/a)X$, and $((\Omega-I_d)\Delta\alpha(X)|Y=y)$ follows a $d$-variate Gaussian distribution with covariance $(1-1/a)^2(\Omega-2I_d+\Sigma)$.  Hence, $S_X'=S_X=\{1,\dots,d\}$ with probability one unless $\Omega=\Sigma=I_d$, in which case $S_X'=S_X=\emptyset$ with probability one, i.e., the sparsity sets $S_X'$ and $S_X$ are equal with probability one.

As stated immediately following (\ref{eq:def_bar_S_x}), the sparsity induced by the function $|\Omega-I_d| |\Delta\alpha|$ as in (\ref{eq:def_bar_S_x}) is in turn induced by the separate sparsity patterns induced by the function $\Delta\alpha$, represented by the sets and indices, for $x\in\mathbb{R}^d$,
\begin{align}
\label{eq:S_x_p}
S_x''& =\{i:\Delta\alpha_i(x_i) \neq 0\},  \quad s_x''=|S_x''|,
\end{align}
and the matrix $\Omega-I_d$.  We will consider the sparse estimation of $\Delta\alpha$ in Section~\ref{sec:sparse_estimation_Delta_alpha}, and the sparse estimation of $\Omega-I_d$ in Section~\ref{sec:sparse_estimation_Omega}.

Analogous to (\ref{eq:S_x_p}), we let the sparsity sets and indices for the naive Bayes part induced by the function $\Delta\log f$ be, for $x\in\mathbb{R}^d$,
\begin{align}
\label{eq:S_f_x}
S^f_x &=\{i: \Delta\log f_i(x_i) \neq 0\}, \quad s^f_x=|S^f_x|.
\end{align}

A typical model that induces sparsities for both the functions $\Delta\alpha$ and $\Delta\log f$ is the classification of two distributions $(X|Y=y)$, $y\in\{0,1\}$ such that the marginals $(X_i|Y=y)$, $i\in\{1,\dots,d\}$ of the two distributions are identical except at a subset $S\subset\{1,\dots,d\}$ of coordinates.  For concreteness we assume $S=\{1,\dots,s\}$ and so $|S|=s$, and $s<d$.  In this case, $S_x'',S^f_x\subset S$ for all $x\in\mathbb{R}^d$.  Then, if furthermore $\Omega-I_d$ is appropriately sparse, then the function $|\Omega-I_d||\Delta\alpha|$ is sparse.  For instance, if the first $s$ coordinates of $(\alpha_y(X)|Y=y)$ are independent and are furthermore independent with the remaining $d-s$ coordinates, then the first $s$ columns of $\Omega-I_d$ are identically zero, which implies that $|\Omega-I_d||\Delta\alpha|$ is identically zero and $S_x'$ is identically the empty set at all $x\in\mathbb{R}^d$.  Having considered such an example, we emphasize that our Gaussian copula classification framework does not require that the sets $S_x', S_x'', S^f_x$ are constant over $x\in\mathbb{R}^d$.


\subsection{Estimation of the copula part}
\label{sec:main_result_copula_part}

\subsubsection{Sparse estimation of $\Delta\alpha$}
\label{sec:sparse_estimation_Delta_alpha}

We let, for some $0<\gamma<2$,
\begin{align}
\label{eq:a_n}
a_n &= \sqrt{\gamma \log n}, \\
\label{eq:g_n_gamma}
g(n,\gamma) &= \dfrac{\phi(a_n)}{2a_n} = \dfrac{1}{2\sqrt{2\pi}} \dfrac{n^{-\gamma/2}}{\sqrt{\gamma\log n}}.
\end{align}
The parameter $\gamma$ will eventually be chosen to minimize our bound on the excess risk according to the discussion following Theorem~\ref{thm:excess_risk}; at present we let it be arbitrary.  We will make the blanket assumption that $n$ is large enough such that $a_n\ge 1$.

We let $\wh F_{i|y}:\mathbb{R}\rightarrow\mathbb{R}$ be the empirical distribution function of the $i$th coordinate for class $y$, i.e., for $t\in\mathbb{R}$,
\begin{align}
\wh F_{i|y}(t) = \dfrac{1}{n} \sum_{j=1}^n \1\left\{ X^{y,j}_i\le t \right\}, \nonumber
\end{align}
and let $\wh F_i:\mathbb{R}\rightarrow\mathbb{R}$ be the empirical marginal distribution function of the $i$th coordinate, i.e.,
\begin{align}
\wh F_i = \dfrac{1}{2} \left[ \wh F_{i|0} + \wh F_{i|1} \right]. \nonumber
\end{align}
We let $\wh\alpha_{i|y}:\mathbb{R}\rightarrow\mathbb{R}$ and $\wh\alpha_y:\mathbb{R}^d\rightarrow\mathbb{R}^d$ be, respectively, the estimator of $\alpha_{i|y}$ and $\alpha_y$ defined as: for $t\in\mathbb{R}$ and $x\in\mathbb{R}^d$,
\begin{align}
\label{def_wh_alpha_y_i}
\wh\alpha_{i|y}(t) &= \Phi^{-1} ( \wh F_{i|y} (t)), \\
\wh\alpha_y(x) &=(\wh\alpha_{1|y}(x_1),\dots,\wh\alpha_{d|y}(x_d) )^T. \nonumber
\end{align}
The property of the estimator $\wh\alpha_{i|y}$ will be discussed in more details in Section~\ref{sec_est_tran}.  Here we only note that, as we will see in Lemma~\ref{thm:delta_alpha}, we focus on the estimation of $\alpha_{i|y}$ over the regime specified by $t:\alpha_{i|y}(t)=\Phi^{-1}(F_{i|y}(t))\in [-a_n, a_n]$, i.e., we focus on the estimation of $\alpha_{i|y}$ for moderate values of $F_{i|y}(t)$.  By Proposition~\ref{prop:A_n_prob_bound}, up to a log factor in $n$, the complement of this region has probability $n^{-\gamma/2}$ with respect to the random variable $(X_i|Y=y)$.  We will loosely refer to the rate $n^{-\gamma/2}$ as the ``exclusion probability,'' and will match some other probability bounds to this rate in the rest of the paper.

Next, we let $\wt\Delta\alpha=(\wt\Delta\alpha_1, \dots, \wt\Delta\alpha_d)^T:\mathbb{R}^d\rightarrow\mathbb{R}^d$ with
\begin{align}
\wt\Delta\alpha(x) = (\wt\Delta\alpha_1(x_1), \dots, \wt\Delta\alpha_d(x_d))^T
\label{eq:wt_Delta_alpha}
\end{align}
be our sparse estimator of $\Delta\alpha$, whose construction consists of two potential steps; we fix arbitrary $i\in\{1,\dots,d\}$ and arbitrary $t\in\mathbb{R}$:
\begin{enumerate}
\item
First, we check whether
\begin{align}
\wh F_{i}(t) \le 4 g(2n,\gamma)\quad \text{or} \quad \wh F_{i}(t) \ge 1 - 4 g(2n,\gamma).
\label{eq:F_test_1}
\end{align}
(Note that the test involves the empirical marginal distribution function $\wh F_{i}$.  The constant $4$ in (\ref{eq:F_test_1}) is chosen for convenience.)  At the same time, we also check whether
\begin{align}
\max\left\{ \dfrac{ \max\{ \wh F_{i|0}(t),\wh F_{i|1}(t) \} }{ \min\{ \wh F_{i|0}(t),\wh F_{i|1}(t) \} } ,  \dfrac{ \max\{ 1-\wh F_{i|0}(t),1-\wh F_{i|1}(t) \} }{ \min\{ 1-\wh F_{i|0}(t),1-\wh F_{i|1}(t) \} }  \right\} \le \dfrac{1+\bar\delta_{n,d,\gamma}}{1-\bar\delta_{n,d,\gamma}}.
\label{eq:F_test_2}
\end{align}
Here
\begin{align}
\bar\delta_{n,d,\gamma} = \left[ 3 n^{-1} g^{-1}(2n,\gamma) \log(d\cdot n^{\frac{\gamma}{2}}) \right]^{1/2}.
\label{eq:def_bar_delta}
\end{align}
If either inequality in (\ref{eq:F_test_1}) holds, or if Inequality~(\ref{eq:F_test_2}) holds, we set $\wt\Delta\alpha_i(t)=0$.
\item
Otherwise (i.e., if both (\ref{eq:F_test_1}) and (\ref{eq:F_test_2}) are violated) we set
\begin{align}
\wt\Delta\alpha_i(t) &= \wh\alpha_{i|0}(t) - \wh\alpha_{i|1}(t) = \Phi^{-1} ( \wh F_{i|0} (t)) - \Phi^{-1} ( \wh F_{i|1} (t)).
\label{eq:def_wt_Delta_alpha_i_t}
\end{align}
(Here we have invoked the form of $\wh\alpha_{i|y}$ as defined in (\ref{def_wh_alpha_y_i}).)  It is apparent that in this case $\wt\Delta\alpha_i(t)\neq 0$ because if (\ref{eq:F_test_2}) is violated then necessarily $\wh F_{i|0}(t) \neq \wh F_{i|1}(t)$.
\end{enumerate}

The basic intuition behind our two-step construction is as follows.  First, test (\ref{eq:F_test_1}) checks whether the value of $F_i(t)$ is likely close to $0$ or $1$.  If so, then the value of at least one of $F_{i|y}(t)$, $y\in\{0,1\}$ is also likely close to $0$ or $1$, and hence the estimation of the corresponding $\alpha_{i|y}(t)$ is likely poor (see the discussion following Lemma~\ref{thm:delta_alpha}).  In this case, we do not try to estimate $\Delta\alpha_i(t)$ at all and so set $\wt\Delta\alpha_i(t)=0$.  Next, test (\ref{eq:F_test_2}) checks whether the values of $F_{i|0}(t)$ and $F_{i|1}(t)$ are likely close, i.e., whether the signal strength is likely small.  If so, we again set $\wt\Delta\alpha_i(t)=0$.  Otherwise we estimate $\Delta\alpha_i(t)$ as in (\ref{eq:def_wt_Delta_alpha_i_t}) (as one normally would in the absence of sparsity).  The property of the estimator $\wt\Delta\alpha$ will be discussed in more details in Section~\ref{sec_est_Delta_alpha}.



\subsubsection{Sparse estimation of $\Omega-I_d$}
\label{sec:sparse_estimation_Omega}

In this section, we collect some existing results on the sparse estimation of $\Omega$, the precision matrix associated with the copula correlation matrix $\Sigma$, which will lead to our sparse estimation of $\Omega-I_d$.

The literature on sparse precision matrix estimation is rapidly growing (see \cite{Cai14} for a recent review), although many of the recent strong results work under (sub-)Gaussian or moment conditions.  It remains to be seen how these results can be generalized to the Gaussian copula setting where a rank-based pilot estimator, such as Kendall's tau matrix, is usually taken as input.  In this paper we simply quote a result working explicitly with Kendall's tau from \cite{Zhao14}.  Our aim is to demonstrate how the sparse estimation of $\Omega$ can be incorporated into our efficient estimation of the copula part, keeping in mind that stronger results may become available in the future.  For concreteness, as in \cite{Zhao14}, in this paper we will concentrate on the sparse estimation of precision matrices within a particular class $\calU(s,M,\kappa)$, defined as
\begin{align}
\calU(s,M,\kappa) = \Bigg\{&\Omega\in\mathbb{R}^{d\times d}:\Omega\succ0,\text{diag}(\Omega^{-1})=\mathbf{1}, \lambda_{\max}(\Omega)\le\kappa, \nonumber \\
&\left.\max_{\ell}\sum_{k=1}^d \1\left\{[\Omega]_{k\ell}\neq0\right\}\le s,\|\Omega\|_{\infty}\le M\right\}.
\label{eq:def_mathcal_U}
\end{align}
Here $\Omega\succ0$ denotes that $\Omega$ is positive definite, and $\kappa$, $s$ and $M$ may scale with $n$ and $d$.

We let $\wh\Sigma$ be the empirical \textit{plug-in estimator} of $\Sigma$ constructed from Kendall's tau statistic, as we describe below.  We first recall the definition of Kendall's tau statistic associated with class $y\in\{0,1\}$: for $1\le k,\ell\le d$, we have
\begin{eqnarray}
\label{tauhat}
\wh \tau^y_{k\ell} = \frac{2}{n(n-1)} \sumsum_{1\le i<j\le n} \text{sgn}\left( (X^{y,i}_{k}-X^{y,j}_{k})(X^{y,i}_{\ell}-X^{y,j}_{\ell}) \right).
\end{eqnarray}
Then, we let $\wh T^y$ be the empirical Kendall's tau matrix associated with class $y$ with entries
\begin{align}
[\widehat{T}^y]_{k\ell} = \wh \tau_{k\ell}^y \quad \text{ for all }  1\le k,\ell\le d,
\label{eq:Kendall_tau_matrix_est}
\end{align}
and form $\wh \Sigma^y$, the plug-in estimator of $\Sigma$ from class $y$, constructed from $\wh T^y$ as
\begin{align}
\label{R_hattau}
\wh\Sigma^y = \sin \left( \frac{\pi}{2} \wh T^y \right).
\end{align}
Here the sine function acts component-wise.  Finally, we let the overall plug-in estimator of $\Sigma$ from both classes be
\begin{align}
\label{R_hattau_overall}
\wh\Sigma = (\wh\Sigma^0+\wh\Sigma^1)/2.
\end{align}
The conceptual justification of employing the plug-in estimator $\wh\Sigma^y$ to estimate $\Sigma$ is provided by the elegant relationship $\Sigma=\sin \left( \frac{\pi}{2} T \right)$ for semiparametric elliptical copulas (which includes semiparametric Gaussian copulas), for $T=\EE \wh T^y$ the matrix of population version of Kendall's tau (i.e., the $k,\ell$th element of $T$ is Kendall's tau coefficient between the $k$th and $\ell$th elements of $(X|Y=y)$).  We refer the readers to the extensive references in Section~1.1 of \cite{WZ14} for more detailed discussion.  Analytically, the plug-in estimator $\wh\Sigma^y$ has proven to be an efficient estimator of $\Sigma$ in terms of both the element-wise $\|\cdot\|_{\max}$ norm and the operator norm \cite{HCL13,Liu12,Mitra14,WZ14,Xue12b}, and such results easily generalize to the (overall) plug-in estimator $\wh\Sigma$.

We let $\wh\Omega'$ be the solution of \cite[Algorithm~(III.6)]{Zhao14} with tuning parameter $\lambda_n$ specified by
\begin{align}
\lambda_n = \dfrac{2}{\sqrt{n}}\log^{\frac{1}{2}}(2n^{\frac{\gamma}{2}}d^2),
\label{eq:tuning_parameter_Omega}
\end{align}
and $\wh\Omega$ be the result of the symmetrization step \cite[(III.11)]{Zhao14} with $\wt\Omega$ replaced by $\wh\Omega'$ and $\|\cdot\|_{*}$ replaced by $\|\cdot\|_{\infty}$.  Then, we construct our sparse estimator $\wt\Omega$ of $\Omega$ by thresholding $\wh\Omega$ as
\begin{align}
[\wt\Omega]_{k\ell} &= [\wh\Omega]_{k\ell} \cdot \left( \1\{k\neq\ell, |[\wh\Omega]_{k\ell}|> \tau_n \} + \1\{k=\ell, [\wh\Omega]_{kk}> 1+\tau_n \} \right) \nonumber \\
& + \1\{k=\ell, [\wh\Omega]_{kk} \le 1+\tau_n \}
\label{eq:def_wt_Sigma}
\end{align}
for some $\tau_n\ge J_2 \kappa M s \lambda_n$; here $J_2$ is some absolute constant that is precisely introduced in Proposition~\ref{prop:Zhao14_Thm_IV.5}.  In words, to obtain $\wt\Omega$, we shrink the off-diagonal elements of $\wh\Omega$ toward zero, while shrink the diagonal elements of $\wh\Omega$ toward one.  The difference between the treatments of the diagonal and off-diagonal elements in (\ref{eq:def_wt_Sigma}) results from the consideration that we would like $\wt\Omega-I_d$, rather than $\wt\Omega$ itself, to be sparse, as should be the case if $\Omega-I_d$ is sparse, and the basic fact that the diagonal elements of an inverse correlation matrix are bounded below by one (instead of zero as is the case for the off-diagonal elements).  The property of the estimator $\wt\Omega$ will be discussed in more details in Section~\ref{sec_est_Omega}.


\subsubsection{Estimation of $\beta^*$ and the copula part}
\label{sec:est_copula_part}

With our separate sparse estimators $\wt\Delta\alpha$ of $\Delta\alpha$ in Section~\ref{sec:sparse_estimation_Delta_alpha} and $\wt\Omega$ of $\Omega$ in Section~\ref{sec:sparse_estimation_Omega}, we now let $\wh\beta=(\wh\beta_1,\dots,\wh\beta_d)^T:\mathbb{R}^d\rightarrow\mathbb{R}^d$ defined as, for $x\in\mathbb{R}^d$,
\begin{align}
\label{eq:wh_beta}
\wh\beta(x) = (\wh\beta_1(x),\dots,\wh\beta_d(x))^T = \left( \wt\Omega - I_d \right) \wt\Delta\alpha(x)
\end{align}
be our sparse estimator of $\beta^* = \left( \Omega - I_d \right) \Delta\alpha$.  Then, finally, we let $(\wh\alpha_0 + \wh\alpha_1)^T \wh\beta:\mathbb{R}^d\rightarrow\mathbb{R}$ defined as, for $x\in\mathbb{R}^d$,
$$(\wh\alpha_0(x) + \wh\alpha_1(x) )^T \wh\beta(x)$$
be our estimator of the copula part $(\alpha_0+ \alpha_1)^T \beta^*$.


\subsection{Estimation of the naive Bayes part}
\label{sec:main_result_Bayes_part}


\subsubsection{Construction of the kernel density estimator of $f_{i|y}$}
\label{sec:construction_kernel_density_estimator}

Recall from (\ref{eq:naive_Bayes_part}) that for the naive Bayes part we need to estimate
\begin{align}
\sum_{i=1}^d \Delta\log f_i(x_i) = \sum_{i=1}^d \left( \log f_{i|0}(x_i) - \log f_{i|1}(x_i) \right). \nonumber
\end{align}
(Recall that $f_{i|y}$, $i\in\{1,\dots,d\}$, $y\in\{0,1\}$ is the probability density function of the $i$th coordinate for class $y$.)  Hence, naturally, our estimation of the naive Bayes part will be based on the estimation of the density functions $f_{i|y}$, for which we opt to use kernel density estimators.

We let $K_i:\mathbb{R}\rightarrow\mathbb{R}$ be the kernel and $h_{n,i}$ be the bandwidth for the $i$th coordinate, and let $\wh f_{i|y}$ be the kernel density estimator of $f_{i|y}$, constructed from the $n$ samples $X^{y,j}_i$, $j\in\{1,\dots,n\}$:
\begin{align}
\wh f_{i|y}(t) = \dfrac{1}{n h_{n,i}} \sum_{j=1}^{n} K_i \left(\dfrac{X^{y,j}_i-t}{h_{n,i}}\right).
\label{eq:kernel_density_estimator_i_y}
\end{align}
In addition, we let $\wh f_i$ be the kernel density estimator of the marginal density $f_i$, constructed from the $2n$ samples $X^{y,j}_i$, $j\in\{1,\dots,n\}$, $y\in\{0,1\}$, with the same kernel and bandwidth:
\begin{align}
\wh f_i = \dfrac{1}{2} \left[ \wh f_{i|0} + \wh f_{i|1} \right].
\label{eq:kernel_density_estimator_i}
\end{align}
The specifics of the kernel $K_i$ and its order, the bandwidth $h_{n,i}$, as well as a quantity $\underline{f_{n,i}}$ that we need later, depend on the smoothness condition of $f_{i|y}$, and will be specified in details in Section~\ref{sec:choice_kernel_density_estimator}.  The impatient readers are encouraged to jump directly to Section~\ref{eq:sparse_estimation_naive_Bayes_part}.


\subsubsection{Choosing the kernel, the bandwidth, and the quantity $\underline{f_{n,i}}$}
\label{sec:choice_kernel_density_estimator}

We will make the blanket assumption that we have at our disposal a sequence of kernels $\{K^{(l)},l\ge 1\}$ of varying orders, such that $K^{(l)}$ is a kernel of order $l$ and is constructed as in \cite[Proposition~1.3]{TsybakovBook09}.  Hence, the kernel $K^{(l)}$ is compactly supported on $[-1,1]$, and satisfies $\|K^{(l)}\|_{L^{\infty}}\le C_K\cdot l^{3/2}$ for an absolute constant $C_K$ independent of $l$ and $\|K^{(l)}\|_{L^2}^2\le l$.  Here and below, for a function $f:\mathbb{R}\rightarrow\mathbb{R}$, we denote $\|f\|_{L^p} = \left( \int_{\mathbb{R}} |f(t)|^{p} dt \right)^{1/p}$.  (We can substitute the sequence $\{K^{(l)}, l\ge 1\}$ by any other sequence of kernels that are compactly supported on $[-1,1]$ and that have comparable bound on the growth rate of $\|K^{(l)}\|_{L^{\infty}}$ and $\|K^{(l)}\|_{L^2}^2$ with $l$, although for concreteness we avoid such generalization.)

We will always choose the kernel $K_i$ from the sequence $\{K^{(l)},l\ge 1\}$.  We opt not to employ ``kernels of infinite order'' (e.g., \cite{Devroye92}), because such kernels don't have compact support, while the derivation of Inequality~(\ref{eq:f_rel_dev_variance}) in Proposition~\ref{prop:f_rel_dev_variance} requires a kernel with compact support to eliminate an extra factor $f_{i|y}$ in the exponent through condition (\ref{eq:t_in_B_f}).

As is typical in kernel density estimation, we assume that the density functions $f_{i|y}$ satisfy certain smoothness conditions.  For simplicity we assume that, for each $i\in\{1,\dots,d\}$, the two density functions $f_{i|y}$, $y\in\{0,1\}$ have comparable smoothness, and hence we use the same kernel and bandwidth for the two classes $y\in\{0,1\}$.

We will consider the canonical case of densities belonging to a H\"{o}lder class.  On the other hand, it may turn out that it is too restrictive to have a H\"{o}lder class characterize the smoothness of certain densities, such as Gaussian densities.  Here we consider one class of such densities, which we will call \text{super-smooth densities}, and obtain improved convergence rate and weakened assumption of their estimation (as compared to densities that merely belong to some H\"{o}lder class), if we allow the order of the kernel to increase with the sample size as $\lceil\log(n)\rceil$.  First we introduce our precise definition of super-smooth densities.

\begin{definition}[Super-smooth densities]
\label{def:super_smooth_densities}
We say the class of continuous density functions $\calF$ is super-smooth with respect to the sequence of constants $\{c_l, l\ge 1\}$ with $c_l\rightarrow 0$ as $l\rightarrow\infty$, if for any $f\in\calF$, any $t\in\mathbb{R}$ and any $l\ge 1$, the bias satisfies
\begin{align}
\left| \EE \left[ \wh f_{K^{(l)}}(t) \right] - f(t) \right| \le c_l h^l. \nonumber
\end{align}
Here $\wh f_{K^{(l)}}$ is the kernel density estimator of $f$ constructed using the kernel $K^{(l)}$ (or order $l$) and some arbitrary bandwidth $h$.
\end{definition}

Our next result shows that appropriate class of (univariate) Gaussian density functions are super-smooth.
\begin{proposition}
\label{prop:Gaussian_super_smooth}
The class $F_{\sigma_0^2}$, with $\sigma_0^2>0$, of Gaussian density functions with variance $\sigma^2$ bounded below by $\sigma_0^2$ is super-smooth with respect to the sequence of constants $c_l=\dfrac{C_{\textnormal{Cram\'{e}r}}\|K^{(l)}\|_{L^{\infty}}}{\sqrt{\pi/2}(l!)^{1/2} \sigma_0^{l+1} }$.  Here the absolute constant $C_{\textnormal{Cram\'{e}r}}<1.09$.
\end{proposition}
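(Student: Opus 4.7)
The plan is to carry out the standard bias calculation for kernel density estimators, exploiting two special features of Gaussian densities: (i) their derivatives are expressible via Hermite polynomials times the density itself, and (ii) those Hermite-times-density products obey a uniform bound of the right order in $l$ via Cramér's classical inequality. In doing so, the only fact I need about the sequence $\{K^{(l)}\}$ is that $K^{(l)}$ is a kernel of order $l$ supported in $[-1,1]$ with uniform norm $\|K^{(l)}\|_{L^\infty}$.

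First I would write, for any density $f$ and bandwidth $h>0$, the elementary change of variable
\begin{equation*}
\EE[\wh f_{K^{(l)}}(t)] - f(t) \;=\; \int_{-1}^{1} K^{(l)}(u)\,\bigl[f(t+hu)-f(t)\bigr]\,du.
\end{equation*}
Next I would apply Taylor's theorem to $f$ at $t$ through order $l-1$ with the Lagrange remainder, so that $f(t+hu) = \sum_{k=0}^{l-1} (hu)^k f^{(k)}(t)/k! + (hu)^l f^{(l)}(\xi_{t,u})/l!$ for some $\xi_{t,u}$ between $t$ and $t+hu$. Since $K^{(l)}$ is a kernel of order $l$, its moments of orders $1,\ldots,l-1$ vanish and its integral equals one, so the polynomial part contributes exactly $f(t)$ and cancels. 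Thus
\begin{equation*}
\EE[\wh f_{K^{(l)}}(t)] - f(t) \;=\; \frac{h^l}{l!}\int_{-1}^{1} K^{(l)}(u)\, u^l\, f^{(l)}(\xi_{t,u})\,du,
\end{equation*}
so that $|\EE[\wh f_{K^{(l)}}(t)] - f(t)| \le (h^l/l!)\cdot 2\|K^{(l)}\|_{L^\infty}\cdot \sup_{\xi\in\RR} |f^{(l)}(\xi)|$, using $|u|\le 1$ and the compact support of $K^{(l)}$.

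The crucial step, and essentially the only place where Gaussianity enters, is the uniform bound on $\sup_\xi |f^{(l)}(\xi)|$ for $f(t) = \sigma^{-1}\phi((t-\mu)/\sigma)$. Differentiating $l$ times and using the identity $\phi^{(l)}(z) = (-1)^l He_l(z)\phi(z)$, where $He_l$ denotes the probabilists' Hermite polynomial, I would write $f^{(l)}(t) = (-1)^l\sigma^{-(l+1)} He_l((t-\mu)/\sigma)\,\phi((t-\mu)/\sigma)$. Now I invoke Cramér's inequality in the form $|He_l(z)|\,e^{-z^2/4} \le C_{\textnormal{Cram\'er}}\sqrt{l!}$ uniformly in $z$, with $C_{\textnormal{Cram\'er}}<1.09$ (this is the classical Cramér bound on physicists' Hermite polynomials transferred to the probabilist normalisation; it is the step that both gives the $\sqrt{l!}$ scaling responsible for super-smoothness and supplies the constant $C_{\textnormal{Cram\'er}}$). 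This immediately yields $|f^{(l)}(t)| \le (2\pi)^{-1/2}\sigma_0^{-(l+1)} C_{\textnormal{Cram\'er}}\sqrt{l!}\cdot e^{-z^2/4}\le (2\pi)^{-1/2}\sigma_0^{-(l+1)} C_{\textnormal{Cram\'er}}\sqrt{l!}$, using $\sigma\ge\sigma_0$ in the prefactor.

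Plugging this back into the bias bound gives
\begin{equation*}
\bigl|\EE[\wh f_{K^{(l)}}(t)] - f(t)\bigr| \;\le\; \frac{2\,C_{\textnormal{Cram\'er}}\,\|K^{(l)}\|_{L^\infty}}{\sqrt{2\pi}\,(l!)^{1/2}\,\sigma_0^{l+1}}\,h^l \;=\; c_l\,h^l,
\end{equation*}
since $2/\sqrt{2\pi} = 1/\sqrt{\pi/2}$, which is exactly the claimed form. The main obstacle is really just the Cramér inequality; the rest is bookkeeping of the standard order-$l$ kernel bias expansion, and the uniform control $\sigma\ge\sigma_0$ makes the Hermite-based bound uniform over the class $F_{\sigma_0^2}$. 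If one wanted to avoid quoting the exact constant $1.09$, the same argument with any absolute Cramér constant would establish super-smoothness in the sense of Definition~\ref{def:super_smooth_densities}; quantifying the constant is why we single out Cramér's sharp form.
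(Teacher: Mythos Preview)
Your proof is correct and follows essentially the same approach as the paper's: both expand the bias to order $l$ via Taylor's theorem, express the $l$th Gaussian derivative through Hermite polynomials, apply Cram\'er's inequality (the paper does the explicit conversion from probabilists' to physicists' Hermite polynomials, while you quote the translated form directly), and bound the remaining kernel integral by $2\|K^{(l)}\|_{L^\infty}$ using the support $[-1,1]$. The only small omission is that you do not verify $c_l\to 0$, which is part of Definition~\ref{def:super_smooth_densities}; the paper dispatches this in one line via Stirling and the growth bound $\|K^{(l)}\|_{L^\infty}\le C_K\,l^{3/2}$.
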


\begin{proof}
The proof can be found in Section~\ref{sec:proof_prop:Gaussian_super_smooth}.
\end{proof}


From now on, we make the blanket assumption that for each $i\in\{1,\dots,d\}$, the density functions $f_{i|y}$, $y\in\{0,1\}$ either belong to the same H\"{o}lder class, or to the same class of super-smooth densities, and we choose appropriate order $l_i$ of the kernel $K_i$ and the value of the bandwidth $h_{n,i}$ for their estimation, as well as the quantity $\underline{f_{n,i}}$, according to our specification below.  We define
\begin{align}
\label{eq:choice_epsilon}
\epsilon_n = (2J_1)^{-\frac{1}{2}} \left[\gamma\log(n)\right]^{\frac{3}{4}} n^{-\left(\frac{1}{2}-\frac{\gamma}{4}\right)}.
\end{align}
Here $J_1$ is the particular constant that appears in (\ref{eq:ineq_delta_master_1}).

We first consider the case where the density functions $f_{i|y}$, $y\in\{0,1\}$ merely belong to the H\"{o}lder class $\Sigma(\beta_i,L_i)$.  We set
\begin{align}
C_i &=\left(\dfrac{l!}{ 2^+ 2 L_i \|K_i\|_{L^{\infty}}}\right)^{1/\beta_i}, \nonumber \\
\label{eq:f_i_lower_bound}
\underline{f_{n,i}} &= \left( J_{\beta_i,\gamma,C_d} \cdot \dfrac{\max\left\{ 3 \|K_i\|_{L^{\infty}} \epsilon_n,\|K_i\|_{L^2}^2 \right\}}{C_i}\right)^{\frac{\beta_i}{\beta_i+1}} \cdot \log^{-\frac{2\beta_i+3}{4(\beta_i+1)}}(n) \cdot n^{-\left(-\frac{1}{2(\beta_i+1)}+\frac{2\beta_i+1}{\beta_i+1}\frac{\gamma}{4} \right)}.
\end{align}
Here $J_{\beta_i,\gamma,C_d}$ is a finite but large enough constant to ensure that Inequality~(\ref{eq:f_rel_dev_concrete}) in Theorem~\ref{thm:f_rel_dev} holds, and it depends only on $\beta_i$, $\gamma$ and $C_d$, for the constant $C_d$ to be introduced in Assumption~\ref{ass_d_n}.  Then, we let the kernel $K_i$ have order $l_i=\lfloor\beta_i\rfloor$, i.e. we let $K_i=K^{(l_i)}$, and let the bandwidth $h_{n,i}$ be (recall $\epsilon_n$ as defined in (\ref{eq:choice_epsilon}))
\begin{align}
\label{eq:h_n_i_concrete}
h_{n,i} = C_i \left( \epsilon_n \underline{f_{n,i}} \right)^{1/\beta_i}.
\end{align}

Alternatively, we assume that the density functions $f_{i|y}$, $y\in\{0,1\}$ belong to a class of super-smooth densities with respect to the sequence of constants $\{c_l,l\ge 1\}$.  We then let the order of the kernel $K_i$ to vary with the sample size $n$, and in particular we set $K_i=K_i(n)=K^{(\lceil\log(n)\rceil)}$.  We let the bandwidth $h_{n,i}$ be
\begin{align}
\label{eq:h_n_i_concrete_super_smooth}
h_{n,i} = H_i \log^{-\frac{1}{2}}(n)
\end{align}
for a constant $H_i$ satisfying
\begin{align}
\label{eq:H_i_condition_super_smooth}
H_i \le \log(2)/\sqrt{\gamma}.
\end{align}
We also set, in this case,
\begin{align}
\label{eq:f_i_lower_bound_super_smooth}
\underline{f_{n,i}} = J_{\gamma,C_d} \cdot H_i^{-1} \cdot \log(n) \cdot n^{-\frac{\gamma}{2}}.
\end{align}
Here again $J_{\gamma,C_d}$ is a finite but large enough constant to ensure that Inequality~(\ref{eq:f_rel_dev_concrete}) in Theorem~\ref{thm:f_rel_dev} holds, and it depends only on $\gamma$ and $C_d$.  Note that the dependence on $n$ in (\ref{eq:f_i_lower_bound}) is, up to a log factor in $n$, identical to the dependence on $n$ in (\ref{eq:f_i_lower_bound_super_smooth}) in the limit $\beta_i\rightarrow\infty$ but is slower for finite $\beta_i$, which implies that the condition required for the accurate estimation of $\Delta\log f_i$ in the H\"{o}lder case is stronger, as we will see in Sections~\ref{sec:kernel_density_estimation} and \ref{sec:sparse_estimation_Bayes_part}.



\subsubsection{Sparse estimation of the naive Bayes part}
\label{eq:sparse_estimation_naive_Bayes_part}

We let $\wt\Delta \log f=(\wt\Delta \log f_1,\dots,\wt\Delta \log f_d)^T:\mathbb{R}^d\rightarrow\mathbb{R}^d$ with, for $x\in\mathbb{R}^d$,
\begin{align}
\wt\Delta \log f(x) = (\wt\Delta\log f_1(x_1), \dots, \wt\Delta\log f_d(x_d))^T
\label{eq:wt_Delta_log_f}
\end{align}
be our sparse estimator of $\Delta \log f$.  Analogous to the construction of $\wt\Delta\alpha$ in Section~\ref{sec:sparse_estimation_Delta_alpha}, the construction of $\wt\Delta \log f$ consists of two potential steps; we fix arbitrary $i\in\{1,\dots,d\}$ and arbitrary $t\in\mathbb{R}$:
\begin{enumerate}
\item
First, we check whether
\begin{align}
\wh f_{i}(t) \le 3 \underline{f_{n,i}}
\label{eq:density_test_1}
\end{align}
(Note that the test involves the marginal empirical density function $\wh f_{i}$.)  At the same time, we also check whether
\begin{align}
\left| \log \wh f_{i|0}(t) - \log \wh f_{i|1}(t) \right| \le \tilde\delta_{n,\gamma}.
\label{eq:density_test_2}
\end{align}
Here
\begin{align}
\tilde\delta_{n,\gamma} = 2\dfrac{\epsilon_n}{1-\epsilon_n}.
\label{eq:def_density_delta}
\end{align}
If either Inequality~(\ref{eq:density_test_1}) or Inequality~(\ref{eq:density_test_2}) holds, we set $\wt\Delta\log f_i(t)=0$.
\item
Otherwise (i.e., if both (\ref{eq:density_test_1}) and (\ref{eq:density_test_2}) are violated), we set
\begin{align}
\wt\Delta\log f_i(t) = \log \wh f_{i|0}(t) - \log \wh f_{i|1}(t).
\label{eq:def_wt_Delta_log_f_i_t}
\end{align}
\end{enumerate}

The basic intuition behind our two-step construction is analogous to that of the construction of $\wt\Delta\alpha$ in Section~\ref{sec:sparse_estimation_Delta_alpha} and is as follows.  First, test (\ref{eq:density_test_1}) checks whether the value of $f_{i}(t)$ is likely small.  If so, then the value of at least one of $f_{i|y}(t)$, $y\in\{0,1\}$ is also likely small, and hence the estimation of the corresponding $\log f_{i|y}(t)$ is likely poor (because the error when estimating the logarithm of the density is roughly scaled by the inverse of the density; see Section~\ref{sec:sparse_estimation_Bayes_part}).  In this case, we do not try to estimate $\Delta\log f_i(t)$ at all and so set $\wt\Delta\log f_i(t)=0$.  Next, test (\ref{eq:density_test_2}) checks whether the values of $\log f_{i|0}(t)$ and $\log f_{i|1}(t)$ are likely close, i.e., whether the signal strength is likely small.  If so, we again set $\wt\Delta\log f_i(t)=0$.  Otherwise we estimate $\Delta\log f_i(t)$ as in (\ref{eq:def_wt_Delta_log_f_i_t}) (as one normally would in the absence of sparsity).  The property of the estimator $\wt\Delta\log f$ will be discussed in more details in Section~\ref{sec:sparse_estimation_Bayes_part}.


\subsection{Performance of the empirical decision rule $\wh\delta_n$, and discussion}
\label{sec:main_result_decision_rule}

We put together our estimators for the copula part and the naive Bayes part to construct $\wh{\log(f^0/f^1)}$, our estimator of the log density ratio $\log(f^0/f^1)$, as follows: for $x\in\mathbb{R}^d$, we let
\begin{align}
\wh{\log(f^0/f^1)}(x) = (\wh\alpha_0(x) + \wh\alpha_1(x) )^T \wh\beta(x) + \sum_{i=1}^d \wt\Delta \log f_i(x_i).
\label{eq:estimator_log_density}
\end{align}
Then, based on (\ref{eq:estimator_log_density}), our empirical classification rule $\wh\delta_n$ predicts $Y=1$ if and only if $\wh{\log(f^0/f^1)}(X)\ge 0$.


We collect in Section~\ref{sec:assumptions} the relevant assumptions we need for the pointwise performance guarantee of the estimator $\wh{\log(f^0/f^1)}$.  Their necessity will only be explained in details later in Sections~\ref{sec_copula} and \ref{sec_naive_bayes}, and some of these assumptions are rather technical.  Hence, most readers may want to jump directly to Section~\ref{sec:excess_risk_bound}.

\subsubsection{Collection of assumptions}
\label{sec:assumptions}

The first assumption ensures the accurate estimation and support recovery of $\Omega-I_d$.
\begin{assumption}
\label{ass_Omega}
The precision matrix $\Omega$ satisfies $\Omega\in\calU$, for the class $\calU$ as defined in (\ref{eq:def_mathcal_U}).  In addition, for all $k,\ell\in\{1,\dots,d\}$ such that $k\neq\ell$, if $[\Omega]_{k\ell}\neq 0$, then $|[\Omega]_{k\ell}| > 2\tau_n$, while for all $k\in\{1,\dots,d\}$, if $[\Omega]_{kk}> 1$, then $[\Omega]_{kk} > 1 + 2\tau_n$.  (We recall $\tau_n$ as introduced in (\ref{eq:def_wt_Sigma}).)
\end{assumption}
We also assume that the dimension $d$ grows at most with a polynomial rate in $n$, as specified by Assumption~\ref{ass_d_n}.  (Although moderate exponential growth of $d$ with $n$ can be accommodated, in this paper we do not treat such situations in order to avoid complicated-looking exponent in $n$ when displaying convergence rates.)  We also impose in Assumption~\ref{ass_d_n} the condition that the product $\kappa s$ (recall the definitions of $\kappa, s$ from (\ref{eq:def_mathcal_U})) does not scale too rapidly with $n$, which simplifies certain bounds on convergence rates.
\begin{assumption}
\label{ass_d_n}
$d\le n^{C_d}$ for some absolute constant $C_d>0$, and $\kappa s \sqrt{\log(n)}\lambda_n = o(\epsilon_n)$.  (We recall $\lambda_n$ and $\epsilon_n$ as introduced in (\ref{eq:tuning_parameter_Omega}) and (\ref{eq:choice_epsilon}) respectively.)
\end{assumption}

The next four assumptions concern the location $x\in\mathbb{R}^d$ at which we can estimate the log density ratio $\log(f^0/f^1)(x)$ accurately.  Of these, the first two concern the estimation of the copula part and the remaining two concern the estimation of the naive Bayes part.

For the copula part, we define the sets
\begin{align}
\label{eq:B_n_gamma_i_y}
B_{n,\gamma,i,y} &= \left\{t: t~\text{satisfies Inequality}~(\ref{eq:F_t_large_2}) \right\}, \quad y\in\{0,1\} \\
\label{eq:B_delta_n_d_gamma_i}
B^{\delta}_{n,d,\gamma,i} &= \left\{t: t~\text{satisfies at least one of Inequalities}~(\ref{eq:F_ratio_1}), (\ref{eq:F_ratio_2}), (\ref{eq:F_ratio_3}), (\ref{eq:F_ratio_4}) \right\}
\end{align}
for the inequalities
\begin{align}
8 g(2n,\gamma) \le F_{i|y}(t) \le 1 - 8 g(2n,\gamma),
\label{eq:F_t_large_2}
\end{align}
and
\begin{eqnarray}
\label{eq:F_ratio_1}
\dfrac{ F_{i|0}(t) }{ F_{i|1}(t) } &>& \dfrac{ \left(1+\bar\delta_{n,d,\gamma}\right) \left(1+\bar\delta_{n,1,\gamma}\right) }{ \left(1-\bar\delta_{n,d,\gamma}\right) \left(1-\bar\delta_{n,1,\gamma}\right) }, \\
\label{eq:F_ratio_2}
\dfrac{ F_{i|1}(t) }{ F_{i|0}(t) } &>& \dfrac{ \left(1+\bar\delta_{n,d,\gamma}\right) \left(1+\bar\delta_{n,1,\gamma}\right) }{ \left(1-\bar\delta_{n,d,\gamma}\right) \left(1-\bar\delta_{n,1,\gamma}\right) }, \\
\label{eq:F_ratio_3}
\dfrac{ 1 - F_{i|0}(t) }{ 1 - F_{i|1}(t) } &>& \dfrac{ \left(1+\bar\delta_{n,d,\gamma}\right) \left(1+\bar\delta_{n,1,\gamma}\right) }{ \left(1-\bar\delta_{n,d,\gamma}\right) \left(1-\bar\delta_{n,1,\gamma}\right) }, \\
\label{eq:F_ratio_4}
\dfrac{ 1 - F_{i|1}(t) }{ 1 - F_{i|0}(t) } &>& \dfrac{ \left(1+\bar\delta_{n,d,\gamma}\right) \left(1+\bar\delta_{n,1,\gamma}\right) }{ \left(1-\bar\delta_{n,d,\gamma}\right) \left(1-\bar\delta_{n,1,\gamma}\right) }.
\end{eqnarray}
Here the constant $8$ in (\ref{eq:F_t_large_2}) is chosen for convenience, and $\bar\delta_{n,1,\gamma}$ is just $\bar\delta_{n,d,\gamma}$ as defined in (\ref{eq:def_bar_delta}) but with $d$ replaced by $1$, i.e.,
\begin{align}
\bar\delta_{n,1,\gamma} \colonequals \left[ 3 n^{-1} g^{-1}(2n,\gamma) \log(n^{\frac{\gamma}{2}}) \right]^{1/2}.
\label{eq:def_bar_delta_s_x_p}
\end{align}
Then, we define
\begin{align}
\label{eq:wt_A_F_1}
A^{F,1}_{n,d,\gamma} &= \left\{ x\in\mathbb{R}^d: \forall i\in S_x'', x_i \in B_{n,\gamma,i,0} \cap B_{n,\gamma,i,1} \right\}, \\
\label{eq:wt_A_F_2}
A^{F,2}_{n,d,\gamma} &= \left\{ x\in\mathbb{R}^d: \forall i\in S_x'', x_i \in B^{\delta}_{n,d,\gamma,i} \right\}, \\
\label{eq:wt_A_F_n_d_gamma}
A^F_{n,d,\gamma} &= A^{F,1}_{n,d,\gamma} \cap A^{F,2}_{n,d,\gamma}.
\end{align}
Next, we define
\begin{align}
A^F_{n,\beta^*,\gamma} = \{x\in\mathbb{R}^d: \forall i\in S_x', \forall y\in\{0,1\}, \alpha_{i|y}(x_i)\in [-a_n,a_n]\}.
\label{eq:wt_A_F_beta_n_gamma}
\end{align}
Our first two assumptions regarding $x\in\mathbb{R}^d$ are
\begin{assumption}
\label{ass_x}
$x\in\mathbb{R}^d$ satisfies $x\in A^F_{n,d,\gamma}$ .
\end{assumption}
\begin{assumption}
\label{ass_x_beta_star}
$x\in\mathbb{R}^d$ satisfies $x\in A^F_{n,\beta^*,\gamma}$ .
\end{assumption}
Essentially, when $x\in\mathbb{R}^d$ satisfies Assumption~\ref{ass_x}, then for $i\in S_x''$, where we have $\Delta\alpha_i(x_i)\neq 0$, the values of $F_{i|y}(x_i)$, $y\in\{0,1\}$ are moderate so that $\alpha_{i|y}(x_i)$, $y\in\{0,1\}$ can be estimated accurately, and the signal strength, i.e., the difference between $F_{i|0}(x_i)$ and $F_{i|1}(x_i)$, is large enough so that we do not mistaken $\Delta\alpha_i(x_i)$ to be zero.  Similarly, when $x\in\mathbb{R}^d$ satisfies Assumption~\ref{ass_x_beta_star}, then $\alpha_{i|y}(x_i)$, $y\in\{0,1\}$ can be estimated accurately at those coordinates $i\in S_x'$.

For the naive Bayes part, we define, for $h_{n,i}$ the bandwidth, $i\in\{1,\dots,d\}$ and $y\in\{0,1\}$, the sets
\begin{align}
B^f_{h_{n,i},i,y} = \Big\{ t\in\mathbb{R}: &\text{if}~f_{i|y}(t)<\underline{f_{n,i}},~\text{then}~\max_{t'\in\left[t-h_{n,i},t+h_{n,i}\right]} f_{i|y}(t') \le 2 \underline{f_{n,i}}; \nonumber \\
&\text{if}~f_{i|y}(t)\ge\underline{f_{n,i}},~\text{then}~\max_{t'\in\left[t-h_{n,i},t+h_{n,i}\right]} f_{i|y}(t') \le 2 f_{i|y}(t) \Big\}
\label{def_B_f_c}
\end{align}
and
\begin{align}
A^f_{n,i} = \left\{t\in\mathbb{R}: t~\text{satisfies Inequality}~(\ref{eq:density_t_large_2}) \right\}
\end{align}
for
\begin{align}
f_{i|y}(t) \ge \dfrac{3}{1-\epsilon_n} \underline{f_{n,i}}, \forall y\in\{0,1\}.
\label{eq:density_t_large_2}
\end{align}
Then, we define the sets
\begin{align}
\label{eq:wt_A_f_eq_n_d_gamma}
A^{f,=}_{n,d,\gamma} &= \left\{ x\in\mathbb{R}^d: \forall i\notin S^f_x, x_i\in \cap_{y\in\{0,1\}}B^f_{h_{n,i},i,y} \right\}, \\
\label{eq:wt_A_f_neq_n_d_gamma}
A^{f,\neq}_{n,d,\gamma} &= \left\{ x\in\mathbb{R}^d: \forall i\in S^f_x, x_i \in A^f_{n,i} \cap \left( \cap_{y\in\{0,1\}} B^f_{h_{n,i},i,y} \right) \right\}.
\end{align}
Our remaining two assumptions regarding $x\in\mathbb{R}^d$ are
\begin{assumption}
\label{ass_density_x_S_f_x_c}
$x\in\mathbb{R}^d$ satisfies $x\in A^{f,=}_{n,d,\gamma}$.
\end{assumption}

\begin{assumption}
\label{ass_density_x_S_f_x}
$x\in\mathbb{R}^d$ satisfies $x \in A^{f,\neq}_{n,d,\gamma}$.
\end{assumption}
Roughly speaking, when $x\in\mathbb{R}^d$ satisfies Assumptions~\ref{ass_density_x_S_f_x_c} and \ref{ass_density_x_S_f_x}, then $\wt\Delta\log f(x)$ is an accurate sparse estimator of $\Delta\log f(x)$.



\subsubsection{Bound on the excess risk}
\label{sec:excess_risk_bound}

We are now ready to state the pointwise performance of the estimator $\wh{\log(f^0/f^1)}$.  We define
\begin{align}
\Delta(x) = J_0 \left[ \|\beta^*(x)\|_{\ell_1}  + s_x' M \sqrt{\log(n)} + s_x^f \right] \log^{\frac{3}{4}}(n) n^{-\left(\frac{1}{2}-\frac{\gamma}{4}\right)}.
\label{eq:Delta}
\end{align}
Here $J_0$ is a finite but large enough absolute constant to ensure that Inequality~(\ref{eq:master_error_bound}) in Corollary~\ref{corollary_master_error_bound} holds.

\begin{corollary}
\label{corollary_master_error_bound}
Suppose that Assumptions~\ref{ass_Omega} and \ref{ass_d_n} holds, and that $n$ is large enough.  Suppose that an arbitrary $x\in\mathbb{R}^d$ satisfies Assumptions~\ref{ass_x}, \ref{ass_x_beta_star}, \ref{ass_density_x_S_f_x_c} and \ref{ass_density_x_S_f_x}.  Then, on an event $L$ with
\begin{align}
\PP\left( L \right) \ge 1 - (6 s_x' + 9 s_x'' + 11) n^{-\gamma/2},
\end{align}
we have, for $\Delta(x)$ as defined in (\ref{eq:Delta}),
\begin{align}
&\left|\left[\wh{\log(f^0/f^1)} - \log(f^0/f^1)\right](x)\right| \le \Delta(x).
\label{eq:master_error_bound}
\end{align}
\end{corollary}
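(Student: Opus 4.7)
The plan is to decompose the total estimation error into the contributions from the copula part and the naive Bayes part, invoke the pointwise deviation properties established in Sections~\ref{sec_copula} and \ref{sec_naive_bayes} for each, and then combine the resulting high-probability bounds via a single union bound.

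Via Theorem~\ref{thm:Gaussian_copula_density_ratio} and the definition (\ref{eq:estimator_log_density}), the triangle inequality yields
\[
\left|\wh{\log(f^0/f^1)}(x) - \log(f^0/f^1)(x)\right| \le E_{\text{cop}}(x) + E_{\text{NB}}(x),
\]
where $E_{\text{cop}}(x)$ bounds the error in estimating the copula part and $E_{\text{NB}}(x)$ bounds the error in estimating $\sum_i \Delta\log f_i(x_i)$ by $\sum_i \wt\Delta\log f_i(x_i)$. For $E_{\text{cop}}(x)$ I would further split
\[
(\wh\alpha_0+\wh\alpha_1)^T\wh\beta - (\alpha_0+\alpha_1)^T\beta^* = \bigl(\wh\alpha_0+\wh\alpha_1-\alpha_0-\alpha_1\bigr)^T\wh\beta + (\alpha_0+\alpha_1)^T(\wh\beta-\beta^*),
\]
and write $\wh\beta - \beta^* = (\wt\Omega-\Omega)\wt\Delta\alpha + (\Omega-I_d)(\wt\Delta\alpha - \Delta\alpha)$. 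The first summand is handled by H\"older's inequality, exploiting that $\wh\beta$ is supported in $S_x'$ (thanks to Assumption~\ref{ass_Omega} and the thresholding in (\ref{eq:def_wt_Sigma}) for the support recovery of $\wt\Omega-I_d$, combined with the sparsity of $\wt\Delta\alpha$ in $S_x''$), and the pointwise deviation bound for $\wh\alpha_{i|y}(x_i)$ at each $i\in S_x'$ (Lemma~\ref{thm:delta_alpha}), which applies by Assumption~\ref{ass_x_beta_star}. The second summand is split across its two pieces: $(\wt\Omega-\Omega)\wt\Delta\alpha$ is bounded using Proposition~\ref{prop:Zhao14_Thm_IV.5} for sparse precision estimation, while $(\Omega-I_d)(\wt\Delta\alpha - \Delta\alpha)$ uses the deviation and exact support recovery of $\wt\Delta\alpha$ on $S_x''$ under Assumption~\ref{ass_x}. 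Assumption~\ref{ass_d_n} then absorbs the cross term $\kappa s \sqrt{\log n}\,\lambda_n$ into $\epsilon_n$, and the combined rates reproduce the $\|\beta^*(x)\|_{\ell_1}$ and $s_x' M\sqrt{\log n}$ contributions in $\Delta(x)$, both carrying the factor $\log^{3/4}(n)n^{-(1/2-\gamma/4)}$.

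For $E_{\text{NB}}(x)$ I would split the sum over $i\notin S_x^f$ and $i\in S_x^f$. For $i\notin S_x^f$, Assumption~\ref{ass_density_x_S_f_x_c} places $x_i$ in $\cap_{y}B^f_{h_{n,i},i,y}$; the relative deviation bound of Theorem~\ref{thm:f_rel_dev} then forces at least one of the tests (\ref{eq:density_test_1})--(\ref{eq:density_test_2}) to trigger on a high-probability event, so that $\wt\Delta\log f_i(x_i)=0=\Delta\log f_i(x_i)$. For $i\in S_x^f$, Assumption~\ref{ass_density_x_S_f_x} places $x_i$ in the ``good'' density regime (\ref{eq:density_t_large_2}), where the relative deviation of $\wh f_{i|y}(x_i)$ translates, via the elementary inequality $|\log a - \log b|\le |a/b-1|/(1-|a/b-1|)$, into an $O(\epsilon_n)$ bound on $|\wt\Delta\log f_i(x_i) - \Delta\log f_i(x_i)|$. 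Summing over the $s_x^f$ active coordinates produces the $s_x^f\log^{3/4}(n)n^{-(1/2-\gamma/4)}$ term of $\Delta(x)$, and because Theorem~\ref{thm:f_rel_dev} is uniform in $i\in\{1,\dots,d\}$ and $y\in\{0,1\}$ (with the $\log(d)$ factor absorbed by Assumption~\ref{ass_d_n}), the probability hit from density estimation contributes only a constant multiple of $n^{-\gamma/2}$, independent of $s_x^f$.

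The main obstacle is the bookkeeping of exceptional events and their attribution to sparsity indices. Each of the $s_x'$ coordinates underlying the copula error incurs a constant number of $n^{-\gamma/2}$ events from the $\wh\alpha_{i|y}$ and $\wh F_{i|y}$ deviations, yielding the $6s_x'$ term; the two-step thresholding (\ref{eq:F_test_1})--(\ref{eq:F_test_2}) for $\wt\Delta\alpha$ on $S_x''$ contributes $9s_x''$ such events; the sparse precision estimation, the uniform density estimation, and the uniform empirical distribution function deviations together contribute a bounded constant $11$. A single union bound then yields the stated exceptional probability $(6s_x'+9s_x''+11)n^{-\gamma/2}$, and on the complementary event an appropriate (fixed) choice of $J_0$ in (\ref{eq:Delta}) delivers the pointwise bound (\ref{eq:master_error_bound}).
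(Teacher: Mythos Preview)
Your proposal is correct and follows essentially the same route as the paper: triangle-inequality decomposition into copula and naive Bayes parts, then invoking the pointwise deviation results from Sections~\ref{sec_copula} and~\ref{sec_naive_bayes}, with a union bound for the exceptional events. The paper's proof is simply more modular: it packages your copula-part argument into Corollary~\ref{cor_est_copula_part} (yielding the $(6s_x'+9s_x''+9)n^{-\gamma/2}$ contribution) and your naive Bayes argument into Theorem~\ref{thm:est_naive_bayes_part} (yielding the remaining $2n^{-\gamma/2}$), sets $L=L^{\text{copula}}_{x,n}\cap L^{\text{bayes}}_{x,n}$, and is done.

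One small technical difference worth noting: for the copula error you use the two-term split $(\wh\alpha_0+\wh\alpha_1-\alpha_0-\alpha_1)^T\wh\beta + (\alpha_0+\alpha_1)^T(\wh\beta-\beta^*)$, whereas the paper (in the proof of Theorem~\ref{thm_est_copula_part}) uses the three-term split with $\beta^*$ in place of $\wh\beta$ in the first summand. Your version requires the extra step $\|\wh\beta(x)\|_{\ell_1}\le\|\beta^*(x)\|_{\ell_1}+\|\wh\beta(x)-\beta^*(x)\|_{\ell_1}$ to recover the $\|\beta^*(x)\|_{\ell_1}$ term in $\Delta(x)$, with the residual absorbed into the $s_x' M\sqrt{\log n}$ term; this is fine, just slightly less direct than the paper's three-term decomposition.
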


\begin{proof}
From the construction of $\wh{\log(f^0/f^1)(x)}$ as in (\ref{eq:estimator_log_density}), we have
\begin{align}
&\left|\left[\wh{\log(f^0/f^1)} - \log(f^0/f^1)\right](x)\right| \nonumber \\
& \le \left| \left[ (\wh\alpha_0 + \wh\alpha_1 )^T \wh\beta - (\alpha_0 + \alpha_1 )^T \beta^*\right](x) \right| + \left\| \left[\wt \Delta\log f - \Delta\log f\right](x) \right\|_{\ell_1}.
\label{eq:wh_eta_error}
\end{align}
We let $L=L^{\text{copula}}_{x,n} \cap L^{\text{bayes}}_{x,n}$, for the events $L^{\text{copula}}_{x,n}$ introduced in (\ref{eq:def_L_copula}) and $L^{\text{bayes}}_{x,n}$ introduced in (\ref{eq:L_n_b_x_n_gamma}).  The corollary then follows straightforwardly from Inequality~(\ref{eq:wh_eta_error}), Corollary~\ref{cor_est_copula_part} and Theorem~\ref{thm:est_naive_bayes_part}.  (Note that for $n$ large enough we are free to replace $\tilde\delta_{n,\gamma}$ by $C \epsilon_n$, which is in turn bounded as in (\ref{eq:choice_epsilon_simple}).)
\end{proof}


Because Corollary~\ref{corollary_master_error_bound} states a deviation inequality for the estimator $\wh{\log(f^0/f^1)}$ of the log density ratio, we can straightforwardly calculate the excess risk, defined in (\ref{eq:excess_risk}), associated with the empirical decision rule $\wh\delta_n$.  Moreover, by the same reason, we can easily incorporate the margin assumption, introduced in \cite{Audibert07}, to take advantage of the potential low noise condition in the problem.  We state a slight variant of the margin assumption from \cite[Relationship~(1.7)]{Audibert07} in terms of the log density ratio instead of the regression function, which is more suited for our Gaussian copula classification framework.
\begin{assumption}[The margin assumption] There exist constants $C_0>0$ and $\alpha\ge 0$ s.t.
\label{assumption:margin_condition}
\[
\PP(0<|\log(f^0/f^1)(X)|\le t)\le C_0 t^{\alpha}, \forall t>0.
\]
\end{assumption}

As a concrete example, in the canonical case of classifying two Gaussian distributions with the same covariance, the margin assumption is fulfilled with $\alpha=1$, e.g., see Appendix~\ref{sec:margin_assumption_Gaussian}.

We define the set of $x\in\mathbb{R}^d$ simultaneously satisfying Assumptions~\ref{ass_x}, \ref{ass_x_beta_star}, \ref{ass_density_x_S_f_x_c} and \ref{ass_density_x_S_f_x} as
\begin{align}
A_{n,d,\gamma} = A^{F}_{n,d,\gamma} \cap A^F_{n,\beta^*,\gamma} \cap A^{f,=}_{n,d,\gamma} \cap A^{f,\neq}_{n,d,\gamma}
\label{eq:wt_A_n_d_gamma}
\end{align}
(for $A^{F}_{n,d,\gamma}$, $A^F_{n,\beta^*,\gamma}$, $A^{f,=}_{n,d,\gamma}$, $A^{f,\neq}_{n,d,\gamma}$ as in (\ref{eq:wt_A_F_n_d_gamma}),  (\ref{eq:wt_A_F_beta_n_gamma}), (\ref{eq:wt_A_f_eq_n_d_gamma}) and (\ref{eq:wt_A_f_neq_n_d_gamma}) respectively).  We also state one more piece of assumption under which we can simplify our bound on the excess risk to be presented in Theorem~\ref{thm:excess_risk}.

\begin{assumption}
\label{assumption:fixed_sets}
For all $x\in\mathbb{R}^d$, the cardinalities of $S_x'$, $S_x''$ and $S^f_x$, i.e., $s_x'$, $s_x''$ and $s^f_x$, are upper bounded by constants $s'$, $s''$ and $s^f$ respectively, and $\|\beta^*(x)\|_{\ell_1}$ is upper bounded by a constant $C_{\beta^*}$.
\end{assumption}

\begin{theorem}
\label{thm:excess_risk}
Suppose that Assumptions~\ref{ass_Omega}, \ref{ass_d_n}, and the margin assumption \ref{assumption:margin_condition} hold, and that $n$ is large enough.  Then the excess risk satisfies
\begin{align}
\PP(\wh\delta_n(X)\neq Y)-\PP(\delta^*(X)\neq Y) & \le \PP\left(X\notin A_{n,d,\gamma}\right) + \EE\left[6 s_X' + 9 s_X'' + 11 \right] n^{-\gamma/2} \nonumber \\
& + \dfrac{1}{2} \EE\left[ \Delta(X) \1\left\{ \left|\log(f^0/f^1)(X)\right|\le \Delta(X) \right\}  \right].
\label{eq:excess_risk_1}
\end{align}
Hence, if in addition Assumption~\ref{assumption:fixed_sets} holds, then the excess risk satisfies
\begin{align}
&\PP(\wh\delta_n(X)\neq Y)-\PP(\delta^*(X)\neq Y) \le \PP\left(X\notin A_{n,d,\gamma}\right) + (6 s'+9 s''+11) n^{-\frac{\gamma}{2}} \nonumber \\
&+  \dfrac{C_0}{2} \left\{ J_0 \left[ C_{\beta^*} + s' M \sqrt{\log(n)} + s^f \right] \log^{\frac{3}{4}}(n) n^{-\left(\frac{1}{2}-\frac{\gamma}{4}\right)} \right\}^{\alpha+1}.
\label{eq:excess_risk_2}
\end{align}
\end{theorem}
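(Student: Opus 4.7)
The plan is to combine the classical comparison identity for the 0-1 excess risk with the pointwise guarantee of Corollary~\ref{corollary_master_error_bound} and then integrate against the margin assumption.

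First, I would invoke the standard identity
\[
\PP(\wh\delta_n(X)\neq Y)-\PP(\delta^*(X)\neq Y)=\EE\bigl[\,|2\eta(X)-1|\,\1\{\wh\delta_n(X)\neq\delta^*(X)\}\,\bigr],
\]
where the outer expectation is over the test point $X$ and the training sample jointly. Since $\PP(Y=0)=\PP(Y=1)=1/2$ yields $\eta=f^1/(f^0+f^1)$, setting $L=\log(f^0/f^1)$ gives $|2\eta-1|=|\tanh(L/2)|\le\tfrac12|L|$, so it suffices to bound $\tfrac12\EE[\,|L(X)|\,\1\{\wh\delta_n\neq\delta^*\}\,]$. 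I would then split the expectation according to whether $X$ lies in the good set $A_{n,d,\gamma}$ of (\ref{eq:wt_A_n_d_gamma}): points with $X\notin A_{n,d,\gamma}$ contribute at most $\PP(X\notin A_{n,d,\gamma})$ because $|2\eta-1|\le 1$.

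For $X\in A_{n,d,\gamma}$, I would condition further on the training-sample event $L_X=L^{\text{copula}}_{X,n}\cap L^{\text{bayes}}_{X,n}$ from Corollary~\ref{corollary_master_error_bound}. Its complement contributes
\[
\EE\bigl[\,\PP_{\text{train}}(L_X^c\mid X)\,\1\{X\in A_{n,d,\gamma}\}\,\bigr]\le \EE[6s'_X+9s''_X+11]\,n^{-\gamma/2},
\]
using the corollary's probability estimate together with the independence of $X$ and the training sample. On $L_X$ the corollary supplies $|\wh L(X)-L(X)|\le \Delta(X)$, where $\wh L=\wh{\log(f^0/f^1)}$. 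The key geometric observation is that $\wh\delta_n(X)\neq\delta^*(X)$ forces $\wh L(X)$ and $L(X)$ onto opposite sides of $0$, so $|L(X)|\le |\wh L(X)-L(X)|\le\Delta(X)$; on $\{L(X)=0\}$ the excess-risk integrand vanishes because $|2\eta-1|=0$ there. Combining with the $|2\eta-1|\le|L|/2$ bound yields
\[
\tfrac12\EE\bigl[\,|L(X)|\,\1\{\wh\delta_n\neq\delta^*,\,X\in A_{n,d,\gamma},\,L_X\}\,\bigr]\le \tfrac12\EE\bigl[\,\Delta(X)\,\1\{|L(X)|\le\Delta(X)\}\,\bigr],
\]
which assembled with the two preceding pieces delivers~(\ref{eq:excess_risk_1}).

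To deduce~(\ref{eq:excess_risk_2}), I would invoke Assumption~\ref{assumption:fixed_sets} to replace the random sparsity indices and $\|\beta^*(x)\|_{\ell_1}$ by their deterministic upper bounds, so that $\Delta(X)\le\bar\Delta := J_0[C_{\beta^*}+s'M\sqrt{\log n}+s^f]\log^{3/4}(n)\,n^{-(1/2-\gamma/4)}$ and $\EE[6s'_X+9s''_X+11]\le 6s'+9s''+11$. Since $\Delta(X)\le\bar\Delta$ is deterministic, $\tfrac12\EE[\Delta(X)\1\{|L(X)|\le\Delta(X)\}]\le \tfrac12\bar\Delta\,\PP(|L(X)|\le\bar\Delta)$, and the margin Assumption~\ref{assumption:margin_condition} gives $\PP(|L(X)|\le\bar\Delta)\le C_0\bar\Delta^{\alpha}$, turning the last term of~(\ref{eq:excess_risk_1}) into $\tfrac{C_0}{2}\bar\Delta^{\alpha+1}$ as claimed. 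The main obstacle is careful bookkeeping: $L_X$, $\wh L$ and $\Delta(X)$ are functions of the training sample through the estimators while $X$ is an independent test point, so each expectation must be iterated in the correct order (conditioning on $X$ first, applying the corollary's bound, and then integrating in $X$) before it can be combined with the margin-driven term.
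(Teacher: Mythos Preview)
Your proposal is correct and follows essentially the same route as the paper. Both start from the identity $\PP(\wh\delta_n\neq Y)-\PP(\delta^*\neq Y)=\EE[|2\eta(X)-1|\,\1\{\wh\delta_n\neq\delta^*\}]$, use the Lipschitz bound $|2\eta-1|\le\tfrac12|\log(f^0/f^1)|$, and split over $\{X\in A_{n,d,\gamma}\}$; the only cosmetic difference is that the paper partitions the good set according to whether $|\log(f^0/f^1)(X)|\le\Delta(X)$ and then bounds the complementary piece by the probability estimate of Corollary~\ref{corollary_master_error_bound}, whereas you partition by the training-sample event $L_X$ first and then observe that on $L_X$ a misclassification forces $|\log(f^0/f^1)(X)|\le\Delta(X)$---these two splits are logically equivalent. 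Your derivation of~(\ref{eq:excess_risk_2}) via Assumption~\ref{assumption:fixed_sets} and the margin assumption matches the paper's as well.
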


\begin{proof}
The proof can be found in Section~\ref{sec:proof_thm:excess_risk}.
\end{proof}

We elaborate on the results presented in Theorem~\ref{thm:excess_risk}.  We note that without the term $\PP\left(X\notin A_{n,d,\gamma}\right)$ in (\ref{eq:excess_risk_1}) and (\ref{eq:excess_risk_2}), we can choose $\gamma$ to optimize the convergence rate with respect to $n$.  For instance, on the right hand side of (\ref{eq:excess_risk_2}), the last term scales with $n$ as $n^{-\left(\frac{1}{2}-\frac{\gamma}{4}\right)(\alpha+1)}$ up to log factors.  To match this convergence rate in $n$ with that of the second term on the right hand side of (\ref{eq:excess_risk_2}) (up to log factors), we can choose $\gamma=(2\alpha+2)/(\alpha+3)$ so that the last two terms on the right hand side of (\ref{eq:excess_risk_2}) both scale with $n$ as $n^{-(\alpha+1)/(\alpha+3)}$ (up to log factors).  Therefore, for $\alpha=1$, we achieve a convergence rate of $n^{-1/2}$, while for larger values of $\alpha$, we obtain a convergence rate faster than $n^{-1/2}$.

This leaves us the task of bounding the first term on the right hand side of (\ref{eq:excess_risk_2}), namely the term $\PP\left(X\notin A_{n,d,\gamma}\right)$.  The collection $A_{n,d,\gamma}^c$, the complement of (\ref{eq:wt_A_n_d_gamma}), is the set on which it is difficult to estimate the log density ratio accurately.  This set is explicitly dependent on the particular distribution functions and the density functions of $(X|Y=0)$ and $(X|Y=1)$.  Hence, we cannot explicitly calculate the term $\PP\left(X\notin A_{n,d,\gamma}\right)$ unless we specify explicit distributions, although we can easily establish a lower bound on this term that scales with $n$ as $g(2n,\gamma)$ for all possible distributions (e.g., through the set $A^{F,1}_{n,d,\gamma}$ as defined in (\ref{eq:wt_A_F_1})), and it is straightforward to construct a toy example where this lower bound is achieved.

To demonstrate a concrete upper bound on the term $\PP\left(X\notin A_{n,d,\gamma}\right)$, we consider in Section~\ref{sec:case_study} the canonical case of classifying two Gaussian distributions with the same covariance but different means, specifically under the scenario stated in Definition~\ref{def:simple_Gaussian}.  Then, we have
\begin{align}
\PP\left(X\notin A_{n,d,\gamma}\right) \le C_{\gamma,C_d,\mu} (s'+s'') e^{C \mu \sqrt{\gamma\log(n)}} g(2n,\gamma).
\label{eq:case_study_P_A}
\end{align}
Here $C_{\gamma,C_d,\mu}$ is some constant dependent only on $\gamma,C_d,\mu$, and we refer the readers to Section~\ref{sec:case_study} for the exact meanings of the parameters $\mu$, $s'$ and $s''$ in (\ref{eq:case_study_P_A}).  Thus, the convergence rate of the term $\PP\left(X\notin A_{n,d,\gamma}\right)$ with respect to $n$ is just slightly slower than that of the second term on the right hand side of (\ref{eq:excess_risk_2}) (we note that $e^{C\mu\sqrt{\gamma \log(n)}}=o(n^{\varepsilon})$ for all $\varepsilon>0$).  As stated following Assumption~\ref{assumption:margin_condition}, here the margin assumption is fulfilled with $\alpha=1$, and so we choose $\gamma=(2\alpha+2)/(\alpha+3)=1$ as discussed earlier.  Then, in this particular scenario, the excess risk associated with our empirical decision rule $\wh\delta_n$ based on semiparametric method achieves a convergence rate of $e^{C \mu \sqrt{\gamma\log(n)}} n^{-1/2}$ with respect to $n$, which is nearly the rate of $n^{-1/2}$.



\subsection{Case study: Gaussian distribution classification}
\label{sec:case_study}

In this section we assume that $(X,Y)$ follows a simple model, which we will casually refer to as the simple $(d,s',\mu,\Sigma)$ Gaussian classification model and which is described in Definition~\ref{def:simple_Gaussian}.  We will calculate the term $\PP\left(X\notin A_{n,d,\gamma}\right)$ explicitly under this model, and state our result in Theorem~\ref{thm:exclusion_set_simple_Gaussian}.
\begin{definition}
\label{def:simple_Gaussian}
We let $Z_1,\dots,Z_d$ be $d$ standard normal random variables with correlation matrix $\Sigma$.  We fix some $1\le s''\le d$ and some $\mu\in\mathbb{R}^+$.  We say that $(X,Y)\in\mathbb{R}^d\times\{0,1\}$ is a simple $(d,s'',\mu,\Sigma)$ Gaussian classification model if $(X|Y=0)\stackrel{d}{=}(Z_1,\dots,Z_d)^T$ and $(X|Y=1)\stackrel{d}{=}(Z_1+\mu,\dots,Z_{s''}+\mu,Z_{s''+1},\dots,Z_d)^T$.
\end{definition}

Under the simple $(d,s'',\mu,\Sigma)$ Gaussian classification model, the marginal distributions of $(X|Y=0)$ and $(X|Y=1)$ are identical except for the first $s''$ coordinates, and $\Delta\alpha$ is a constant function that returns a vector with the first $s''$ components equal to $\mu$ and the remaining components equal to zero.  We let $S''=\{1,\dots,s''\}$, which has cardinality $s''$.  Then, for all $x\in\mathbb{R}^d$, $S_x'' = S''$ (for $S_x''$ as defined in (\ref{eq:S_x_p})) and $S^f_x\subset S''$ (for $S^f_x$ as defined in (\ref{eq:S_f_x})); in addition, $S_x'$ (as defined in (\ref{eq:def_bar_S_x})) is a constant set $S'$, which we assume has cardinality $s'$.

We also recall that, because Gaussian densities are super-smooth densities, for all $i\in\{1,\dots,d\}$ and all $y\in\{0,1\}$, the density function $f_{i|y}$ is estimated with the kernel $K_i=K_i(n)=K^{(\lceil\log(n)\rceil)}$ and with the bandwidth $h_{n,i}$ as in (\ref{eq:h_n_i_concrete_super_smooth}), and additionally the quantity $\underline{f_{n,i}}$ is chosen according to (\ref{eq:f_i_lower_bound_super_smooth}), as we discussed in Section~\ref{sec:choice_kernel_density_estimator}.

\begin{theorem}
\label{thm:exclusion_set_simple_Gaussian}
Suppose that Assumption~\ref{ass_d_n} holds.  Under the simple $(d,s'',\mu,\Sigma)$ Gaussian classification model, for $n$ large enough, Inequality~(\ref{eq:case_study_P_A}) holds.
\end{theorem}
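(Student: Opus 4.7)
The plan is to decompose $\{X\notin A_{n,d,\gamma}\}$ via a union bound over its four constituent pieces $A^F_{n,d,\gamma}$, $A^F_{n,\beta^*,\gamma}$, $A^{f,=}_{n,d,\gamma}$, $A^{f,\neq}_{n,d,\gamma}$, and within each piece via a further union bound over at most $s'+s''$ coordinates. The crucial reduction of complexity comes from the structure of the simple Gaussian model: because $F_{i|0}\equiv F_{i|1}\equiv\Phi$ and $f_{i|0}\equiv f_{i|1}\equiv\phi$ for every $i>s''$, one has $S_x''\subset\{1,\dots,s''\}$ and, up to a Lebesgue-null set of $x$, $S^f_x\subset\{1,\dots,s''\}$, while $S_x'\equiv S'$ is a deterministic set of size $s'$. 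Consequently every inner union bound involves at most $s'$ or $s''$ coordinates, with no factor of $d$.

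For each coordinate $i$ in the relevant sparsity set, the per-coordinate failure probability reduces to an explicit univariate Gaussian tail. Exploiting $\alpha_{i|0}(t)=t$ and (for $i\le s''$) $\alpha_{i|1}(t)=t-\mu$, all of $A^{F,1}_{n,d,\gamma}$, $A^F_{n,\beta^*,\gamma}$, and the $A^f_{n,i}$ part of $A^{f,\neq}_{n,d,\gamma}$ translate into the requirement that both $X_i$ and $X_i-\mu$ lie in an interval $[-u_n,u_n]$ with $u_n\asymp\sqrt{\gamma\log n}$ (the exact scale comes from $\Phi^{-1}(8g(2n,\gamma))$, $a_n$, or the solution of $\phi(u)=c\,\underline{f_{n,i}}$ respectively, all of which agree to leading order). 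By Mills' ratio $1-\Phi(u)\le\phi(u)/u$, the ``wrong-mean'' tail $\PP(X_i-\mu\notin[-u_n,u_n]\mid Y=0)$ has dominant term $\Phi(\mu-u_n)\asymp\phi(u_n-\mu)/(u_n-\mu)$ with exponent $(u_n-\mu)^2/2=\gamma\log n/2-\mu\,u_n+\mu^2/2$, which produces precisely the factor $e^{\mu\sqrt{\gamma\log n}}\cdot g(n,\gamma)$; averaging over $Y\in\{0,1\}$ preserves this order. Summing over the at most $s'+s''$ coordinates yields the advertised $(s'+s'')e^{C\mu\sqrt{\gamma\log n}}g(2n,\gamma)$ scaling.

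Two of the four pieces contribute nothing and deserve separate verification. For $A^{F,2}_{n,d,\gamma}$, since $\mu>0$ forces $\Phi(t)>\Phi(t-\mu)$ for all $t$, inequalities (\ref{eq:F_ratio_2}) and (\ref{eq:F_ratio_3}) automatically fail, so the bad set requires both (\ref{eq:F_ratio_1}) and (\ref{eq:F_ratio_4}) to fail. Inequality (\ref{eq:F_ratio_1}) fails only for $t$ sufficiently large positive (where $\Phi(t)/\Phi(t-\mu)\to 1$) and (\ref{eq:F_ratio_4}) fails only for $t$ sufficiently large negative; using $\bar\delta_{n,d,\gamma}\to 0$ together with the bounded-below gap $\Phi(t)-\Phi(t-\mu)\ge c_\mu$ on the middle window, one checks that the two failure regions are disjoint for $n$ large. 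For $A^{f,=}_{n,d,\gamma}$, the coordinates $i>s''$ have $f_{i|y}\equiv\phi$ and the bandwidth condition (\ref{eq:H_i_condition_super_smooth}), $H_i\le\log(2)/\sqrt{\gamma}$, was chosen precisely so that both branches of $B^f_{h_{n,i},i,y}$ in (\ref{def_B_f_c}) hold pointwise: the Branch-$1$ gap $t_0-\sqrt{t_0^2-2\log 2}\sim\log(2)/\sqrt{\gamma\log n}$ at the threshold $t_0\sim\sqrt{\gamma\log n}$ majorizes $h_{n,i}$, and the analogous Branch-$2$ inequality $2ht-h^2\le2\log 2$ for $t\le t_0$ also holds, so $B^f_{h_{n,i},i,y}=\RR$ and the $(d-s'')$ coordinates give no contribution.

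The main obstacle is the careful book-keeping needed to absorb the various logarithmic factors and lower-order $\mu$-dependent shifts into a single constant $C_{\gamma,C_d,\mu}$ and a single Mills-type exponent $e^{C\mu\sqrt{\gamma\log n}}$, uniformly over the three nontrivial pieces whose thresholds $u_n$ agree only to leading order. The emptiness of the $A^{F,2}$ and $A^{f,=}$ bad regions is itself elementary but fiddly, relying on the specific interplay between $\bar\delta_{n,d,\gamma}$, $h_{n,i}$, and the scale $\sqrt{\gamma\log n}$; once these two pieces are discharged, the remaining bound is a direct application of Mills' ratio with shifted and unshifted means.
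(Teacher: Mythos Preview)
Your proposal is correct and follows essentially the same route as the paper: the same four-piece union bound, the same Mills-ratio computation producing the $e^{C\mu\sqrt{\gamma\log n}}g(2n,\gamma)$ factor for the ``wrong-mean'' tails, the same emptiness of the $A^{F,2}$ bad set once $\bar\delta_{n,d,\gamma}\to 0$, and the same verification that $B^f_{h_{n,i},i,y}=\RR$ via the bandwidth condition $H_i\le\log(2)/\sqrt{\gamma}$.

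Two small points of alignment. First, you establish $B^f_{h_{n,i},i,y}=\RR$ only for $i>s''$, but the $B^f$ constraint also appears inside $A^{f,\neq}_{n,d,\gamma}$ for $i\le s''$; the paper therefore proves $B^f_{h_{n,i},i,y}=\RR$ for \emph{all} $i$ (the argument is identical, since every $f_{i|y}$ is a unit-variance Gaussian), which then collapses $A^{f,=}_{n,d,\gamma}\cap A^{f,\neq}_{n,d,\gamma}$ to $\{x:\forall i\le s'',\,x_i\in A^f_{n,i}\}$. Second, for $A^{F,2}$ the paper phrases the key fact as a uniform lower bound $\max\{\Phi(t)/\Phi(t-\mu),\,(1-\Phi(t-\mu))/(1-\Phi(t))\}\ge 1+J_\mu''$ for all $t$, which is equivalent to your disjointness-of-failure-regions statement (both ratios are monotone in $t$ and cross at $t=\mu/2$) and slightly cleaner than invoking a ``bounded-below gap $\Phi(t)-\Phi(t-\mu)$ on the middle window.''
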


\begin{proof}
The proof can be found in Section~\ref{sec:proof_thm:exclusion_set_simple_Gaussian}.
\end{proof}

Therefore, as explained in details in the discussion following Theorem~\ref{thm:excess_risk}, for classifying two Gaussian distributions under the simple $(d,s'',\mu,\Sigma)$ Gaussian classification model, the excess risk associated with our empirical decision rule $\wh\delta_n$ nearly achieves the rate of $n^{-1/2}$.


\section{Detailed study of the copula part}
\label{sec_copula}

\subsection{Outline}
In Section~\ref{sec_est_tran}, we study the estimation of the transformation functions $\alpha_{i|y}$.  This serves as one of the building blocks for our sparse estimation of $\Delta\alpha$ in Section~\ref{sec_est_Delta_alpha}, which in turn elaborates our earlier Section~\ref{sec:sparse_estimation_Delta_alpha}.  In Section~\ref{sec_est_Omega}, we elaborate our earlier Section~\ref{sec:sparse_estimation_Omega}.  Sections~\ref{sec_est_Delta_alpha} and \ref{sec_est_Omega} combined lead to our estimation of $\beta^*$ in Section~\ref{sec_est_beta} and further the copula part in Section~\ref{sec_est_copula_part}, elaborating our earlier Section~\ref{sec:est_copula_part}.


\subsection{Estimation of the transformation function $\alpha_{i|y}$}
\label{sec_est_tran}

Recall $\alpha_{i|y}$ as defined in (\ref{def_alpha_y_i}) and its estimate $\wh\alpha_{i|y}$ as defined in (\ref{def_wh_alpha_y_i}), and $a_n$ as defined in (\ref{eq:a_n}).  In this section we provide a tight, pointwise deviation inequality of $|\wh\alpha_{i|y}(t)-\alpha_{i|y}(t)|$ for $t$ over the interval $[-a_n,a_n]$ on $\mathbb{R}$ that expands with $n$.

\begin{lemma}
\label{thm:delta_alpha}
Let $0<\epsilon\le\sqrt{2\pi}$ but otherwise be arbitrary.  Then, for all $t\in\mathbb{R}$ such that $\alpha_{i|y}(t)\in [-a_n,a_n]$, we have
\begin{align}
&\PP(|\wh\alpha_{i|y}(t)-\alpha_{i|y}(t)|\ge\epsilon) \nonumber \\
\label{eq:ineq_delta_alpha_1}
&\le 2 \exp\left( -\dfrac{\min\left\{F_{i|y}(t),1-F_{i|y}(t)\right\}}{6\pi} n \epsilon^2  \right) + 6\log(g^{-1}(n,\gamma)/2) \exp\left( -\dfrac{1}{32}n\cdot g(n,\gamma) \right) \\
\label{eq:ineq_delta_master_1}
&\le 2 \exp\left( - J_1 \dfrac{n^{1-\gamma/2} \epsilon^2}{\sqrt{\gamma\log n}}  \right) + 6\log(g^{-1}(n,\gamma)/2) \exp\left( -\dfrac{1}{32}n\cdot g(n,\gamma) \right).
\end{align}
Here $J_1$ is some absolute constant which we can take to be $J_1=(12\pi\sqrt{2\pi})^{-1}$.
\end{lemma}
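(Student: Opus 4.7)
The plan is to convert the deviation of the transformation function into a binomial deviation of the empirical CDF and then apply Bernstein's inequality. Write $p = F_{i|y}(t)$, $\wh p = \wh F_{i|y}(t)$, and $z = \alpha_{i|y}(t) = \Phi^{-1}(p) \in [-a_n, a_n]$. The mean value theorem applied to $\Phi^{-1}$ gives, for some $\xi$ between $p$ and $\wh p$, $|\wh\alpha_{i|y}(t) - \alpha_{i|y}(t)| = |\wh p - p|/\phi(\Phi^{-1}(\xi))$. The crucial analytic input is the pointwise inequality $\phi(\Phi^{-1}(u)) \geq \sqrt{2/\pi}\,\min(u, 1-u)$ for all $u \in (0, 1)$, verified by showing the ratio $\phi(\Phi^{-1}(u))/\min(u, 1-u)$ is minimized at $u = 1/2$ (via the identity $d\phi(\Phi^{-1}(u))/du = -\Phi^{-1}(u)$).

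For the first, Gaussian-type term of (\ref{eq:ineq_delta_alpha_1}), I would restrict to the ``safe event'' $E = \{|\wh p - p| \leq \min(p, 1-p)/2\}$, on which every intermediate $\xi$ satisfies $\min(\xi, 1-\xi) \geq \min(p, 1-p)/2$, and hence $|\wh\alpha_{i|y}(t) - \alpha_{i|y}(t)| \leq \sqrt{2\pi}\,|\wh p - p|/\min(p, 1-p)$. Consequently $\{|\wh\alpha_{i|y}(t) - \alpha_{i|y}(t)| \geq \epsilon\} \cap E \subseteq \{|\wh p - p| \geq \epsilon\min(p, 1-p)/\sqrt{2\pi}\}$. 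Applying Bernstein's inequality to $n\wh p \sim \mathrm{Bin}(n, p)$, using $\mathrm{Var}(n\wh p) = np(1-p) \leq n\min(p, 1-p)$ together with the hypothesis $\epsilon \leq \sqrt{2\pi}$ to control the linear correction term in the Bernstein denominator, yields the first term of (\ref{eq:ineq_delta_alpha_1}); the $1/(6\pi)$ constant absorbs the $\sqrt{2\pi}$ from the MVT step together with the factor-of-$O(1)$ from Bernstein's denominator.

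To cover the complementary event $E^c$, I would use a dyadic peeling argument. Mills ratio (valid for $a_n \geq 1$) gives $\min(p, 1-p) \geq 2 g(n, \gamma)$, so $E^c \subseteq \{|\wh p - p| \geq g(n, \gamma)\}$, which I partition into shells $S_k = \{2^k g(n, \gamma) \leq |\wh p - p| < 2^{k+1} g(n, \gamma)\}$ for $k = 0, 1, \ldots, K$ with $K \leq \log_2(g^{-1}(n, \gamma)/2)$. Bernstein on $S_k$ gives probability at most $2\exp(-n g(n, \gamma) \cdot 2^k/32)$, the $1/32$ arising from the base-level denominator in Bernstein; summing the geometrically decaying bounds over the $O(\log(g^{-1}(n, \gamma)/2))$ shells produces the second term of (\ref{eq:ineq_delta_alpha_1}). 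Finally, (\ref{eq:ineq_delta_master_1}) follows from (\ref{eq:ineq_delta_alpha_1}) by substituting $\min(p, 1-p) \geq 2g(n, \gamma) = n^{-\gamma/2}/(\sqrt{2\pi}\sqrt{\gamma\log n})$ into the exponent of the first term, with $J_1 = 1/(12\pi\sqrt{2\pi})$ absorbing the resulting constants. The main obstacle will be bookkeeping constants so that the Bernstein denominators yield exactly $1/(6\pi)$ and $1/32$, and verifying that the dyadic shells cover $E^c$ with the advertised $6\log(g^{-1}(n, \gamma)/2)$ prefactor.
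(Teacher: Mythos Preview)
Your overall strategy --- a pointwise ``safe event'' on which the mean value theorem is controlled, plus Bernstein --- is correct and in fact simpler than the paper's route. The paper instead works with the uniform empirical process $G_n$, defines a \emph{uniform} safe event $E_F=\{1-G_n(u)>\tfrac12(1-u)\text{ for all }u\in[\tfrac12,1-g(n,\gamma)]\}$, bounds $\PP(E_F^c)$ by a weighted uniform inequality of Shorack--Wellner (this is where the prefactor $6\log(g^{-1}/2)$ and the constant $1/32$ come from), and then on $E_F$ applies a pointwise Bennett-type inequality. Your pointwise $E$ gives a cleaner bound for the lemma as stated; the paper's $E_F$ is chosen so that the same argument almost immediately yields the uniform-in-$t$ version.

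There is, however, a genuine gap in your treatment of $E^c$. After the (harmless) weakening $E^c\subset\{|\wh p-p|\ge g(n,\gamma)\}$ you assert that Bernstein on the shell $S_k=\{2^k g\le |\wh p-p|<2^{k+1}g\}$ gives $2\exp(-n\,2^k g/32)$. That is false when $p$ is away from the boundary: Bernstein yields an exponent of order $n(2^k g)^2/\bigl(p(1-p)+2^k g\bigr)$, and for small $k$ with $2^k g\ll p(1-p)$ (e.g.\ $p=1/2$, $k=0$) this is of order $ng^2$, not $ng$. Since $g(n,\gamma)\to 0$, the peeled sum is \emph{larger} than the target $6\log(g^{-1}/2)\exp(-ng/32)$ for large $n$. (Relatedly, Proposition~\ref{prop:A_n_prob_bound} only gives $\min(p,1-p)\ge g(n,\gamma)$, not $2g(n,\gamma)$.)

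The fix is to drop the weakening and the peeling entirely. Apply Bernstein directly to $E^c=\{|\wh p-p|>\min(p,1-p)/2\}$: with $p(1-p)\le\min(p,1-p)$ the denominator is at most $\tfrac{7}{3}\min(p,1-p)$, so
\[
\PP(E^c)\;\le\;2\exp\!\Bigl(-\tfrac{3}{28}\,n\min(p,1-p)\Bigr)\;\le\;2\exp\!\Bigl(-\tfrac{3}{28}\,n\,g(n,\gamma)\Bigr),
\]
which is strictly stronger than the paper's second term (no logarithmic prefactor, better constant). Your first-term analysis is fine; Bernstein with $t=\epsilon\min(p,1-p)/\sqrt{2\pi}$ and $\epsilon\le\sqrt{2\pi}$ gives an exponent with constant $3/(16\pi)>1/(6\pi)$, so the claimed $1/(6\pi)$ is comfortably achieved.
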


\begin{proof}
The proof can be found in Section~\ref{sec:proof_thm:delta_alpha}.
\end{proof}

We elaborate on the results presented in Lemma~\ref{thm:delta_alpha}.  First, we observe from (\ref{eq:ineq_delta_alpha_1}) that the estimator $\wh\alpha_{i|y}(t)$ of $\alpha_{i|y}(t)$ is the most accurate when the value of $F_{i|y}(t)$ is moderate, i.e., close to $1/2$ instead of close to $0$ or $1$.  Next, we compare our Lemma~\ref{thm:delta_alpha} to some related results in existing literature, in particular \cite[Theorem~2]{Han13} and \cite[Lemma~1]{Mai13}.  Both these results are, roughly speaking, versions of our Inequality~(\ref{eq:ineq_delta_master_1}), but for $t$ uniformly over the interval $t:\alpha_{i|y}(t)\in [-a_n,a_n]$, instead of our pointwise result.  The advantage of our result is that it is a tight deviation inequality, which will allow us to straightforwardly calculate the excess risk and incorporate the margin assumption in Section~\ref{sec:risk_bound}.  This is in contrast to the convergence in probability result in \cite[Theorem~2]{Han13}, and our pointwise convergence rate is distinctly faster than that implied by \cite[Lemma~1]{Mai13}.  Moreover, our proof of Lemma~\ref{thm:delta_alpha} can be easily modified to obtain a version of our Inequality~(\ref{eq:ineq_delta_master_1}) that is uniform over $t:\alpha_{i|y}(t)\in [-a_n,a_n]$.  Because Inequality~(\ref{eq:ineq_delta_master_1}) already suffices for our purpose and offers a somewhat faster convergence rate than the uniform version, we leave the detailed derivation of the latter to future studies.




\subsection{Estimation of $\Delta\alpha$ in a sparse setting}
\label{sec_est_Delta_alpha}

For $x\in\mathbb{R}^d$, we let $\wt S_x''$ be the estimator of $S_x''$ (defined in (\ref{eq:S_x_p})) based on the estimator $\wt\Delta\alpha$, introduced in Section~\ref{sec:sparse_estimation_Delta_alpha}, of $\Delta\alpha$, and $\wt s_x''$ be its cardinality, that is,
\begin{align}
\wt S_x'' = \{i:\wt\Delta\alpha_i(x_i)\neq 0\}, \quad \wt s_x'' = |\wt S_x''|.
\label{eq:wh_S_x_p}
\end{align}

We discuss our estimator $\wt\Delta\alpha$ separately for the case $i\notin S_x''$, i.e., $\Delta\alpha_i(x_i) = 0$ and so $F_i(x_i)=F_{i|0}(x_i) = F_{i|1}(x_i)$ by (\ref{eq:def_alpha_x}) and (\ref{eq:def_Delta_alpha_x}), and the case $i\in S_x''$.  We define, for $i\in\{1,\dots,d\}$ and $t\in\mathbb{R}$, the event
\begin{align}
H_{i,t} = \left\{ \wt\Delta\alpha_i(t) \neq 0 \right\}.
\label{eq:def_H_i_t}
\end{align}


\subsubsection{The case $i\notin S_x''$}

We show in Theorem~\ref{thm:S_x_p_c_est} that, with high probability, and for all $x\in\mathbb{R}^d$, we correctly identify all components of $\Delta\alpha(x)$ that are zero.  We first state a weak condition on the sample size $n$ in Assumption~\ref{ass_n}, which is technical and is in place to facilitate our presentation.
\begin{assumption}
\label{ass_n}
$n$ satisfies
\begin{align}
\max\Bigg\{ & \dfrac{d}{8}\cdot \exp\left( - 3 n \cdot g(2n,\gamma) \right), 4 \exp\left( - n \cdot g(2n,\gamma) \right), \nonumber \\
&6\log(g^{-1}(n,\gamma)/2) \exp\left( -\dfrac{1}{32}n\cdot g(n,\gamma) \right) \Bigg\} \le n^{-\gamma/2}.
\label{eq:thm:S_x_p_c_est_n_condition}
\end{align}
\end{assumption}

\begin{theorem}
\label{thm:S_x_p_c_est}
Suppose that Assumption~\ref{ass_n} holds.  For all $x\in\mathbb{R}^d$ and all $i\notin S_x''$, we have
\begin{align}
\PP(H_{i,x_i}^c) \ge 1 - 8\dfrac{1}{d} n^{-\gamma/2}.
\label{eq:H_i_prob_S_x_p_c_individual}
\end{align}
Hence, by the union bound, for all $x\in\mathbb{R}^d$, we have
\begin{align}
\PP(\cap_{i\notin S_x''} H_{i,x_i}^c) \ge 1 - 8 n^{-\gamma/2}.
\label{eq:H_i_prob_S_x_p_c_union}
\end{align}
\end{theorem}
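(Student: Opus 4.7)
We fix an arbitrary $x\in\RR^d$ and $i\notin S_x''$, and aim to show that with probability at least $1-\tfrac{8}{d}n^{-\gamma/2}$ the two-step construction in Section~\ref{sec:sparse_estimation_Delta_alpha} forces $\wt\Delta\alpha_i(x_i)=0$ because one of the tests (\ref{eq:F_test_1}) or (\ref{eq:F_test_2}) is passed. By the definition of $S_x''$ in (\ref{eq:S_x_p}) together with the strict monotonicity of $\Phi^{-1}$ in (\ref{def_alpha_y_i}), the hypothesis $i\notin S_x''$ is equivalent to $F_{i|0}(x_i)=F_{i|1}(x_i)=F_i(x_i)$; we write $p$ for this common value. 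The argument naturally splits according to whether $p$ lies in an extreme or a moderate regime of $[0,1]$.

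\emph{Case A (extreme $p$).} Suppose first that $p\le 2g(2n,\gamma)$ (the case $p\ge 1-2g(2n,\gamma)$ is symmetric). Here we plan to show that test (\ref{eq:F_test_1}) is satisfied, namely $\wh F_i(x_i)\le 4g(2n,\gamma)$. Since $2n\wh F_i(x_i)$ is a sum of $2n$ i.i.d.\ Bernoulli$(p)$ random variables, Bernstein's inequality (or a sharp one-sided Chernoff bound), applied in the small-variance regime $p(1-p)\le 2g(2n,\gamma)$, will yield a one-sided deviation estimate of the form
\begin{align*}
\PP\bigl(\wh F_i(x_i)-p \ge 2g(2n,\gamma)\bigr) \;\le\; \exp\bigl(-3n\,g(2n,\gamma)\bigr),
\end{align*}
which by the first inequality in Assumption~\ref{ass_n} is bounded by $\tfrac{8}{d}n^{-\gamma/2}$.

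\emph{Case B (moderate $p$).} Suppose $2g(2n,\gamma)<p<1-2g(2n,\gamma)$. Here we plan to show that test (\ref{eq:F_test_2}) is satisfied. Because $\wh F_{i|0}(x_i)$ and $\wh F_{i|1}(x_i)$ are each empirical means of $n$ independent Bernoulli$(p)$ indicators, a two-sided multiplicative Chernoff bound gives, for each $y\in\{0,1\}$,
\begin{align*}
\PP\Bigl(\bigl|\wh F_{i|y}(x_i)-p\bigr| > \bar\delta_{n,d,\gamma}\min\{p,1-p\}\Bigr) \;\le\; 2\exp\Bigl(-\tfrac{1}{3}\bar\delta_{n,d,\gamma}^{\,2}\,n\min\{p,1-p\}\Bigr).
\end{align*}
Substituting the definition (\ref{eq:def_bar_delta}) of $\bar\delta_{n,d,\gamma}$ and using $\min\{p,1-p\}\ge 2g(2n,\gamma)$, the exponent is at least $2\log(dn^{\gamma/2})$, so the per-class failure probability is at most $2(dn^{\gamma/2})^{-2}$. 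On the complement of these two events, an elementary check (treating $p\le\tfrac12$ and $p>\tfrac12$ separately) shows that both ratios appearing in (\ref{eq:F_test_2}) are at most $(1+\bar\delta_{n,d,\gamma})/(1-\bar\delta_{n,d,\gamma})$, so the test holds; a union bound gives total failure probability at most $4(dn^{\gamma/2})^{-2}\le \tfrac{8}{d}n^{-\gamma/2}$.

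Since Cases A and B are mutually exclusive, the pointwise bound (\ref{eq:H_i_prob_S_x_p_c_individual}) follows, and a further union bound over the at most $d$ indices $i\notin S_x''$ yields (\ref{eq:H_i_prob_S_x_p_c_union}). The main obstacle is securing a tight enough constant in Case A: plain Hoeffding would give an exponent of order $n g(2n,\gamma)^2$, which is weaker by a factor of $g(2n,\gamma)$ and would not be absorbed by Assumption~\ref{ass_n}, so a Bernstein-type (or sharp binomial Chernoff) estimate exploiting the small variance $p(1-p)\le 2g(2n,\gamma)$ is essential.
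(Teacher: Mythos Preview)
Your plan is essentially the paper's own argument: split according to whether $p=F_i(x_i)$ is extreme or moderate, use Bernstein for test~(\ref{eq:F_test_1}) in the extreme regime and multiplicative Chernoff for test~(\ref{eq:F_test_2}) in the moderate regime. Your Case~B is in fact slightly tidier than the paper's, which decomposes into eight separate one-sided events rather than two two-sided ones.

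There is, however, a quantitative slip in Case~A. With your cut at $p\le 2g(2n,\gamma)$ and deviation threshold $2g(2n,\gamma)$, Bernstein with variance bound $p(1-p)\le 2g(2n,\gamma)$ gives
\[
\PP\bigl(\wh F_i(x_i)-p\ge 2g(2n,\gamma)\bigr)\le \exp\!\Bigl(-\tfrac{(2n)(2g)^2/2}{2g+2g/3}\Bigr)=\exp\!\bigl(-\tfrac{3}{2}\,n\,g(2n,\gamma)\bigr),
\]
not $\exp(-3n\,g(2n,\gamma))$ as you claim; a sharp KL-type Chernoff bound does no better, yielding roughly $\exp(-1.5\,n\,g)$. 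Since Assumption~\ref{ass_n} is stated with the specific constant $3$ in the exponent, the first clause cannot be invoked as written. The paper avoids this by cutting at $g(2n,\gamma)$ rather than $2g(2n,\gamma)$: then in Case~A one has $p\le g$ and needs $\wh F_i-p\le 3g$, and Bernstein delivers exactly $\exp(-3n\,g(2n,\gamma))$. Shifting your cut from $2g$ to $g$ fixes Case~A at no cost, since in Case~B the bound $\min\{p,1-p\}\ge g(2n,\gamma)$ still combines with the definition~(\ref{eq:def_bar_delta}) of $\bar\delta_{n,d,\gamma}$ to give per-event probability $\le (d\,n^{\gamma/2})^{-1}$, well within the $\tfrac{8}{d}n^{-\gamma/2}$ budget.
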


\begin{proof}
The proof can be found in Section~\ref{sec:proof_thm:S_x_p_c_est}.
\end{proof}


\subsubsection{The case $i\in S_x''$}

We show in Theorem~\ref{thm:S_x_p_est_1} that, with high probability, under Assumption~\ref{ass_x} on the distribution functions at $x\in\mathbb{R}^d$, we also correctly identify all the nonzero components of $\Delta\alpha(x)$.  Then, combined with Theorem~\ref{thm:S_x_p_c_est}, Theorem~\ref{thm:S_x_p_est_1} presents the performance guarantee of our sparse estimator $\wt\Delta\alpha$ of $\Delta\alpha$.  We define the event $H'_{x,\epsilon}$
\begin{align}
\label{eq:H_x_p}
H'_{x,\epsilon} &= \{\wt S_x'' = S_x''\} \nonumber \\
&\cap \left( \cap_{i\in S_x''} \left( \left( \cap_{y\in\{0,1\}} \left\{ |\wh\alpha_{i|y}(x_i)-\alpha_{i|y}(x_i)|<\epsilon \right\} \right) \cap \{ |\wt\Delta\alpha_i(x_i) - \Delta\alpha_i(x_i) | < 2\epsilon \}\right) \right).
\end{align}

Here we record some simple observations regarding Assumption~\ref{ass_x}.  It is trivial to see that at most one of the two Inequalities~(\ref{eq:F_ratio_1}) and (\ref{eq:F_ratio_2}) holds, and at most one of the two Inequalities~(\ref{eq:F_ratio_3}) and (\ref{eq:F_ratio_4}), but for brevity of presentation we do not emphasize this point.  It is also easy to see from (\ref{eq:F_edge_bound}) (for $t$ such that $\alpha_{i|y}(t)=a_n$) and its mirror version (for $t$ such that $\alpha_{i|y}(t)=-a_n$) that, for $y\in\{0,1\}$,
\begin{align}
B_{n,\gamma,i,y} \subset \{t\in\mathbb{R}:\alpha_{i|y}(t)\in [-a_n,a_n]\}.
\label{eq:B_subset_A_n}
\end{align}
Hence, Lemma~\ref{thm:delta_alpha} on the estimation of $\alpha_{i|y}(t)$ by $\wh\alpha_{i|y}(t)$ applies for $t\in B_{n,\gamma,i,y}$.

\begin{theorem}
\label{thm:S_x_p_est_1}
Suppose that Assumption~\ref{ass_n} holds and that an arbitrary $x\in\mathbb{R}^d$ satisfies Assumption~\ref{ass_x}.  Then, for all $i\in S_x''$, we have
\begin{align}
\label{eq:H_i_prob_S_x_p_individual}
\PP(H_{i,x_i}) \ge 1 - 3 n^{-\gamma/2}.
\end{align}
Hence, by the union bound and Theorem~\ref{thm:S_x_p_c_est}, we conclude that
\begin{align}
\PP\left(\wt S_x'' = S_x''\right) \ge 1 - (3 s_x'' + 8) n^{-\gamma/2}.
\label{eq:S_x_p_consistency}
\end{align}
Furthermore, the event $H'_{x,\epsilon}$ introduced in (\ref{eq:H_x_p}) satisfies
\begin{align}
\label{eq:H_x_p_prob}
\PP (H'_{x,\epsilon}) \ge 1 - (5 s_x'' +8) n^{-\gamma/2} - 4 s_x'' \exp\left( - J_1 \dfrac{n^{1-\gamma/2} \epsilon^2}{\sqrt{\gamma\log n}} \right).
\end{align}
\end{theorem}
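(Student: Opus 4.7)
The plan is to prove the three conclusions of Theorem~\ref{thm:S_x_p_est_1} in order: first the pointwise support-recovery bound~(\ref{eq:H_i_prob_S_x_p_individual}), then~(\ref{eq:S_x_p_consistency}) by a union bound together with Theorem~\ref{thm:S_x_p_c_est}, and finally the deviation bound~(\ref{eq:H_x_p_prob}) by combining support recovery with Lemma~\ref{thm:delta_alpha}.

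\textbf{Step 1 (pointwise support recovery at $i\in S_x''$).} Fix $i\in S_x''$; we must show that with probability at least $1-3n^{-\gamma/2}$, both guard tests~(\ref{eq:F_test_1}) and~(\ref{eq:F_test_2}) fail on the realized data at $x_i$, so that the estimator falls into case~(2) of the construction and is nonzero. For test~(\ref{eq:F_test_1}), Assumption~\ref{ass_x} gives $x_i\in B_{n,\gamma,i,0}\cap B_{n,\gamma,i,1}$, hence $F_i(x_i)\in[8g(2n,\gamma),1-8g(2n,\gamma)]$. A one-sided Hoeffding/Bernstein bound on the $2n$-sample empirical average $\wh F_i(x_i)$ then shows $\wh F_i(x_i)\in(4g(2n,\gamma),1-4g(2n,\gamma))$ with probability at least $1-n^{-\gamma/2}$; Assumption~\ref{ass_n} is invoked precisely to absorb the exponent into the $n^{-\gamma/2}$ rate (this mirrors the argument in Theorem~\ref{thm:S_x_p_c_est}).

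\textbf{Step 2 (the ratio test).} For test~(\ref{eq:F_test_2}), I apply the multiplicative Chernoff inequality to the binomial averages $\wh F_{i|y}(x_i)$ (which have mean $F_{i|y}(x_i)\ge 8g(2n,\gamma)$) to conclude that
\[
\PP\!\left(\bigl|\wh F_{i|y}(x_i)/F_{i|y}(x_i)-1\bigr|\ge\bar\delta_{n,1,\gamma}\right)\le n^{-\gamma/2}/2
\]
for each $y\in\{0,1\}$ (and similarly for the tail versions $1-\wh F_{i|y}$); the calibration $\bar\delta_{n,1,\gamma}^{2}=3n^{-1}g^{-1}(2n,\gamma)\log(n^{\gamma/2})$ is what makes the Chernoff exponent equal $\tfrac{\gamma}{2}\log n$. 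On the intersection of these four good events, the empirical ratio $\wh F_{i|0}(x_i)/\wh F_{i|1}(x_i)$ equals the population ratio $F_{i|0}(x_i)/F_{i|1}(x_i)$ multiplied by a factor in $[(1-\bar\delta_{n,1,\gamma})/(1+\bar\delta_{n,1,\gamma}),\,(1+\bar\delta_{n,1,\gamma})/(1-\bar\delta_{n,1,\gamma})]$, and the same holds for the other three ratios. Since $x_i\in B^{\delta}_{n,d,\gamma,i}$ forces at least one of~(\ref{eq:F_ratio_1})–(\ref{eq:F_ratio_4}), the extra product factor $(1+\bar\delta_{n,1,\gamma})/(1-\bar\delta_{n,1,\gamma})$ built into those inequalities is exactly what gets absorbed, leaving one empirical ratio strictly larger than $(1+\bar\delta_{n,d,\gamma})/(1-\bar\delta_{n,d,\gamma})$. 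Thus test~(\ref{eq:F_test_2}) fails. Combining Steps~1 and~2 by a union bound gives~(\ref{eq:H_i_prob_S_x_p_individual}); then another union bound over $i\in S_x''$ plus Theorem~\ref{thm:S_x_p_c_est} applied to the complement yields~(\ref{eq:S_x_p_consistency}).

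\textbf{Step 3 (deviation on $H'_{x,\epsilon}$).} On the support-recovery event, for every $i\in S_x''$ the estimator is $\wt\Delta\alpha_i(x_i)=\wh\alpha_{i|0}(x_i)-\wh\alpha_{i|1}(x_i)$, so by the triangle inequality, once I have $|\wh\alpha_{i|y}(x_i)-\alpha_{i|y}(x_i)|<\epsilon$ for both $y$, I automatically obtain $|\wt\Delta\alpha_i(x_i)-\Delta\alpha_i(x_i)|<2\epsilon$. For each individual $\wh\alpha_{i|y}(x_i)$ I invoke Lemma~\ref{thm:delta_alpha}, which applies because~(\ref{eq:B_subset_A_n}) gives $\alpha_{i|y}(x_i)\in[-a_n,a_n]$; Assumption~\ref{ass_n} controls the residual term in~(\ref{eq:ineq_delta_master_1}) by $n^{-\gamma/2}$, leaving the genuine deviation term $2\exp\!\bigl(-J_1 n^{1-\gamma/2}\epsilon^2/\sqrt{\gamma\log n}\bigr)$. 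A union bound over $i\in S_x''$ and $y\in\{0,1\}$ produces the advertised $4s_x''\exp(\cdots)$ and contributes an extra $2s_x''\,n^{-\gamma/2}$ that bumps $3s_x''+8$ up to $5s_x''+8$, yielding~(\ref{eq:H_x_p_prob}).

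\textbf{Main obstacle.} The substantive work is the bookkeeping in Step~2: the estimator's threshold $(1+\bar\delta_{n,d,\gamma})/(1-\bar\delta_{n,d,\gamma})$ and the population threshold in~(\ref{eq:F_ratio_1})–(\ref{eq:F_ratio_4}) differ by exactly the product factor that a multiplicative Chernoff bound with deviation $\bar\delta_{n,1,\gamma}$ is designed to absorb, so one must check that the choice of $\bar\delta_{n,1,\gamma}$ in~(\ref{eq:def_bar_delta_s_x_p}) and the $1/d$-factor choice of $\bar\delta_{n,d,\gamma}$ (used in the $i\notin S_x''$ case) both calibrate to give the target $n^{-\gamma/2}$ pointwise rate without eating a logarithmic factor. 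Once this calibration is verified the remainder is routine concentration and union bounds.
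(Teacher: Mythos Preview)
Your proposal is correct and follows essentially the same route as the paper: show test~(\ref{eq:F_test_1}) fails via a relative-deviation (Chernoff-type) bound, show test~(\ref{eq:F_test_2}) fails by translating whichever population inequality (\ref{eq:F_ratio_1})--(\ref{eq:F_ratio_4}) holds into its empirical analogue through multiplicative Chernoff bounds calibrated by $\bar\delta_{n,1,\gamma}$, then combine support recovery with Lemma~\ref{thm:delta_alpha} for the deviation part. The only cosmetic differences are that the paper decomposes $\wh F_i$ into $\wh F_{i|0},\wh F_{i|1}$ in Step~1 and, in Step~2, controls only the two one-sided deviations relevant to the single active inequality rather than all four two-sided ones; the budget $3n^{-\gamma/2}$ comes out the same either way.
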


\begin{proof}
The proof can be found in Section~\ref{sec:proof_thm:S_x_p_est_1}.
\end{proof}


\subsection{Sparse estimation of $\Omega$}
\label{sec_est_Omega}

For the estimator $\wh\Omega$ of $\Omega$ introduced in Section~\ref{sec:sparse_estimation_Omega}, we have the following proposition, which is a slight variant of \cite[Theorem~IV.5]{Zhao14}.
\begin{proposition}
\label{prop:Zhao14_Thm_IV.5}
Suppose that $\Omega\in\calU$, and $\kappa s \lambda_n\rightarrow 0$.  Then, there exists an event $E_n$, with
\begin{align}
\PP\left( E_n \right) \ge 1 - n^{-\gamma/2},
\label{eq:P_Omega_estimation}
\end{align}
and some absolute constant $J_2$ such that, for $n$ large enough, on the event $E_n$ we have
\begin{align}
\|\wh\Omega-\Omega\|_{\infty} \le J_2 \kappa M s \lambda_n.
\label{eq:Omega_estimation_bound}
\end{align}
\end{proposition}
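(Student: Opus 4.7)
The plan is to reduce the proposition to the deterministic regularization-theoretic bound in \cite[Theorem~IV.5]{Zhao14}, with the only genuinely new element being to calibrate the concentration of the Kendall's $\tau$ plug-in pilot to the prescribed exclusion rate $n^{-\gamma/2}$. Theorem~IV.5 of \cite{Zhao14} is purely deterministic: on any event on which $\|\wh\Sigma - \Sigma\|_{\max} \le \lambda_n/C'$ for a suitable universal constant $C'$, and provided $\kappa s \lambda_n$ is small enough, the output $\wh\Omega$ of Algorithm~(III.6) followed by the symmetrization (III.11) satisfies $\|\wh\Omega - \Omega\|_{\infty} \le J_2 \kappa M s \lambda_n$ with $J_2$ universal. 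The proof in \cite{Zhao14} is indifferent to whether the pilot is a sample covariance or the Kendall-based $\wh\Sigma$ in~(\ref{R_hattau_overall}); furthermore, replacing the symmetrization norm $\|\cdot\|_{*}$ by $\|\cdot\|_{\infty}$ preserves the conclusion, because symmetrizing a matrix already close to the symmetric target $\Omega$ in any induced operator norm still leaves it within an absolute-constant multiple of that distance. Consequently, only the construction of the sample event $E_n$ requires attention.

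For the concentration of $\wh\Sigma$, fix $y\in\{0,1\}$ and a pair $(k,\ell)$. The entry $[\wh T^y]_{k\ell}$ in~(\ref{tauhat}) is a U-statistic of order two with kernel bounded by one in absolute value, so Hoeffding's inequality for U-statistics gives
\begin{align}
\PP\Bigl(\bigl|[\wh T^y]_{k\ell}-[T]_{k\ell}\bigr|>t\Bigr)\le 2\exp\bigl(-n t^2/2\bigr).\nonumber
\end{align}
Since $\sin$ is $(\pi/2)$-Lipschitz and $\Sigma=\sin(\tfrac{\pi}{2}T)$, the same tail (with the exponent constant adjusted by $(\pi/2)^2$) applies to $[\wh\Sigma^y]_{k\ell}-[\Sigma]_{k\ell}$ via~(\ref{R_hattau}). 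Union-bounding over $y\in\{0,1\}$ and over the $d^2$ entries, and combining the two classes via the triangle inequality in~(\ref{R_hattau_overall}), yields
\begin{align}
\PP\bigl(\|\wh\Sigma - \Sigma\|_{\max}>t\bigr)\le 4 d^2\exp(-c n t^2)\nonumber
\end{align}
for an absolute constant $c>0$. Substituting $t=\lambda_n/C'$ with $\lambda_n$ as in~(\ref{eq:tuning_parameter_Omega}), and using the explicit factor of $2$ in the definition of $\lambda_n$ to dominate $C'/\sqrt{c}$, forces the right-hand side to be at most $n^{-\gamma/2}$. Declare $E_n$ to be this high-probability event; on $E_n$ the deterministic bound from Theorem~IV.5 delivers~(\ref{eq:Omega_estimation_bound}).

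The main (and essentially only) obstacle is bookkeeping of the absolute constants $C'$, $c$, and the Lipschitz constant $\pi/2$ so that the numerical factor $2$ in $\lambda_n$ is sufficient; any shortfall is harmlessly absorbed into a larger value of $J_2$, since the proposition only demands that $J_2$ be \emph{some} absolute constant. The hypothesis $\kappa s\lambda_n\to 0$, a consequence of Assumption~\ref{ass_d_n} via $\kappa s\sqrt{\log n}\,\lambda_n=o(\epsilon_n)$, ensures we lie in the regime where Theorem~IV.5 of \cite{Zhao14} is directly applicable, so no additional technical side-condition needs to be verified; the remainder of the argument — transfer of the Hoeffding bound through the entrywise $\sin$, and passage from $\wh\Omega'$ to the symmetrized $\wh\Omega$ — is routine and contained in \cite{Zhao14}.
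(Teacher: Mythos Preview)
Your proposal is correct and follows essentially the same route as the paper: establish the max-norm concentration of the Kendall's~$\tau$ plug-in $\wh\Sigma$ about $\Sigma$ via Hoeffding's inequality for U-statistics, the Lipschitz property of $\sin$, and a union bound over entries and classes, then define $E_n$ as this concentration event and invoke the deterministic part of \cite[Theorem~IV.5]{Zhao14}. The paper merely cites \cite[Inequality~(4.26)]{WZ14} for the concentration step and uses threshold $\lambda_n$ directly rather than $\lambda_n/C'$, but this is only a cosmetic difference in the bookkeeping of constants.
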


\begin{proof}
By slightly modifying the argument leading to \cite[Inequality~(4.26)]{WZ14}, we have
\begin{align}
\PP\left(\|\wh\Sigma-\Sigma\|_{\max}\ge \lambda_n \right) \le \sum_{y\in\{0,1\}}\PP\left(\|\wh\Sigma^y-\Sigma\|_{\max}\ge \lambda_n \right) \le n^{-\gamma/2}.
\label{eq:P_wh_Sigma_max_norm}
\end{align}
The rest of the proof follows from the proof of \cite[Theorem~IV.5]{Zhao14}.  (In fact, the necessary proof here is simpler because $\Sigma$ is a correlation matrix with unit diagonal.)
\end{proof}

For the rest of this paper we fix the event $E_n$ and the absolute constant $J_2$ as the ones appearing in Proposition~\ref{prop:Zhao14_Thm_IV.5}.  We now state the estimation and support recovery guarantees of $\wt\Omega$, the thresholded version of $\wh\Omega$ introduced in (\ref{eq:def_wt_Sigma}), in Proposition~\ref{thm_Omega}.
\begin{proposition}
\label{thm_Omega}
Suppose that Assumptions~\ref{ass_Omega} and \ref{ass_d_n} hold.  Then, on the event $E_n$ (whose probability satisfies Inequality~(\ref{eq:P_Omega_estimation})), for $n$ large enough,
\begin{align}
\|\wt\Omega-\Omega\|_{\infty} &\le J_2 \kappa M s \lambda_n, \nonumber \\
\sgn(\wt\Omega - I_d) &=\sgn(\Omega - I_d) \nonumber
\end{align}
hold simultaneously.  Here the sign function acts component-wise.
\end{proposition}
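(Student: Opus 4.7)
The plan is to reduce Proposition~\ref{thm_Omega} to a routine case analysis driven by Proposition~\ref{prop:Zhao14_Thm_IV.5} and the beta-min conditions in Assumption~\ref{ass_Omega}. First I would note that Assumption~\ref{ass_d_n} implies $\kappa s \lambda_n \to 0$, so Proposition~\ref{prop:Zhao14_Thm_IV.5} applies and on the event $E_n$ we have $\|\wh\Omega - \Omega\|_\infty \le J_2 \kappa M s \lambda_n$. Since the induced $\infty$-norm dominates the entrywise max norm, this yields the pointwise bound $|[\wh\Omega]_{k\ell} - [\Omega]_{k\ell}| \le J_2 \kappa M s \lambda_n \le \tau_n$ for every pair $(k,\ell)$, where the last inequality is the defining property of $\tau_n$ in (\ref{eq:def_wt_Sigma}).

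Next I would treat the off-diagonal entries via two cases. If $[\Omega]_{k\ell} = 0$ for some $k \neq \ell$, then $|[\wh\Omega]_{k\ell}| \le \tau_n$, so by the thresholding rule (\ref{eq:def_wt_Sigma}) we get $[\wt\Omega]_{k\ell} = 0 = [\Omega]_{k\ell}$. If $[\Omega]_{k\ell} \neq 0$, then Assumption~\ref{ass_Omega} gives $|[\Omega]_{k\ell}| > 2\tau_n$, whence $|[\wh\Omega]_{k\ell}| \ge |[\Omega]_{k\ell}| - \tau_n > \tau_n$, so $[\wt\Omega]_{k\ell} = [\wh\Omega]_{k\ell}$ and $\sgn([\wt\Omega]_{k\ell}) = \sgn([\Omega]_{k\ell})$ because the perturbation is smaller than half the magnitude. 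The diagonal entries are handled symmetrically using the fact that $[\Omega]_{kk} \ge 1$: if $[\Omega]_{kk} = 1$, then $[\wh\Omega]_{kk} \le 1 + \tau_n$ forces $[\wt\Omega]_{kk} = 1$; if $[\Omega]_{kk} > 1$, Assumption~\ref{ass_Omega} gives $[\Omega]_{kk} > 1 + 2\tau_n$, hence $[\wh\Omega]_{kk} > 1 + \tau_n$, and $[\wt\Omega]_{kk} = [\wh\Omega]_{kk}$.

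Combining the two case analyses, in every one of the four scenarios I have either $[\wt\Omega]_{k\ell} = [\Omega]_{k\ell}$ (when $[\Omega]_{k\ell}$ takes its ``boundary'' value $0$ or, on the diagonal, $1$) or $[\wt\Omega]_{k\ell} = [\wh\Omega]_{k\ell}$, so in particular the entrywise bound $|[\wt\Omega]_{k\ell} - [\Omega]_{k\ell}| \le |[\wh\Omega]_{k\ell} - [\Omega]_{k\ell}|$ holds uniformly. Taking maximum row sums preserves this entrywise domination, yielding
\begin{equation*}
\|\wt\Omega - \Omega\|_\infty \le \|\wh\Omega - \Omega\|_\infty \le J_2 \kappa M s \lambda_n.
\end{equation*}
The same case analysis simultaneously shows that the sign pattern of $\wt\Omega - I_d$ matches that of $\Omega - I_d$: the zero entries of $\Omega - I_d$ (which are exactly the off-diagonal zeros of $\Omega$ together with the diagonal indices where $[\Omega]_{kk} = 1$) are mapped by the thresholding to zero entries of $\wt\Omega - I_d$, and the nonzero entries of $\Omega - I_d$ retain both their nonzeroness and their sign.

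The argument is essentially bookkeeping — the main ``obstacle'' is just being careful about the asymmetric treatment of diagonal versus off-diagonal entries in (\ref{eq:def_wt_Sigma}), and aligning the beta-min thresholds in Assumption~\ref{ass_Omega} with the thresholding level $\tau_n$ in such a way that the $\tau_n$-sized deviation bound from Proposition~\ref{prop:Zhao14_Thm_IV.5} strictly separates ``true zero'' from ``true nonzero'' entries. No additional concentration argument is required beyond that supplied by $E_n$.
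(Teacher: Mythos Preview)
Your proposal is correct and follows exactly the approach the paper intends: the paper's own proof simply states that the conclusions ``follow immediately from Proposition~\ref{prop:Zhao14_Thm_IV.5} and Assumption~\ref{ass_Omega}'' after noting that Assumption~\ref{ass_d_n} ensures $\kappa s\lambda_n\to0$, and your case analysis is precisely the unpacking of that ``immediately.''
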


\begin{proof}
With the condition on the growth rate of $\kappa s \lambda_n$ imposed by Assumption~\ref{ass_d_n}, $\kappa s \lambda_n\rightarrow 0$ as is required by Proposition~\ref{prop:Zhao14_Thm_IV.5}.  The conclusions of the proposition follow immediately from Proposition~\ref{prop:Zhao14_Thm_IV.5} and Assumption~\ref{ass_Omega}.
\end{proof}

We mention here that recent study from \cite{Ren13} provides very strong result on the estimation of \textit{individual entries} (rather than through matrix norm) of $\Omega$ under the Gaussian setting, which, as noted in \cite{Cai14}, leads to much weakened assumption on $\Omega$ for accurate support recovery.  (We also mention the result from \cite{Ravikumar11} on the estimation of individual entries of $\Omega$; this result can take the empirical Kendall's tau matrix as input, but at the same times requires strong irrepresentability condition on the Hessian matrix $\Sigma\otimes\Sigma$.)  In fact, as can be seen from the sparse estimation of $\left( \Omega - I_d \right) \Delta\alpha$ which we will undertake in Section~\ref{sec_est_beta}, we only need to estimate accurately, within the matrix $\Omega$, the entries of the rows $[\Omega]_{i\cdot}$, $i\in\{1,\dots,d\}$, whose locations correspond to the set $S_x''$ (and we already have an accurate estimator $\wt S_x''$ of $S_x''$ as demonstrated in Section~\ref{sec_est_Delta_alpha}).  We leave the potential generalization of \cite{Ren13} and related methods to the semiparametric Gaussian copula setting to future studies.


\subsection{Sparse estimation of $\left( \Omega - I_d \right) \Delta\alpha(x)$}
\label{sec_est_beta}

With our separate sparse estimators $\wt\Delta\alpha$ of $\Delta\alpha$ and $\wt\Omega$ of $\Omega$ as constructed in Sections~\ref{sec:sparse_estimation_Delta_alpha} and \ref{sec:sparse_estimation_Omega}, and their properties described in Sections~\ref{sec_est_Delta_alpha} and \ref{sec_est_Omega}, we recall that $\wh\beta$, introduced in (\ref{eq:wh_beta}), is our sparse estimator of $\beta^* = \left( \Omega - I_d \right) \Delta\alpha$ introduced in (\ref{eq:def_beta_star_x}).  Then, we let $\wt S_x'$ be an estimator of $S_x'$ (as defined in (\ref{eq:def_bar_S_x})) as follows
\begin{align}
\label{eq:def_wt_S_x}
\wt S_x' = \left\{i: | [\wt\Omega-I_d]_{i\cdot}^T | | \wt\Delta\alpha(x)| \neq 0 \right\}.
\end{align}
Here, as in (\ref{eq:def_bar_S_x}), $|\cdot|$ takes the absolute value component-wise.  It is easy to see that
\begin{align}
\left\{i:\wh\beta_i(x)\neq 0\right\}\subset\wt S_x'.
\label{eq:wh_beta_support}
\end{align}

Recall the event $H'_{x,\epsilon}$ as introduced in (\ref{eq:H_x_p}), the absolute constant $J_1$ as introduced in Lemma~\ref{thm:delta_alpha}, the event $E_n$ and the absolute constant $J_2$ as introduced in Proposition~\ref{prop:Zhao14_Thm_IV.5}.  Then, we define the event
\begin{align}
\label{eq:def_L_x_gamma}
L_{x,\epsilon} &= \left\{ \wt S_x' = S_x' \right\} \cap H'_{x,\epsilon} \cap E_n \\
&\cap \left( \cap_{i\in S_x'} \left\{ |\wh\beta_i(x)-\beta^*_i(x)| \le  2 (M-1) \epsilon + 2 J_2 \kappa M s \sqrt{\gamma \log(n)} \lambda_n + 2 J_2 \kappa M s \lambda_n \epsilon \right\} \right). \nonumber
\end{align}
Theorem~\ref{thm_L} presents the performance guarantee of our estimator $\wh\beta$ of $\beta^*$.
\begin{theorem}
\label{thm_L}
Suppose that Assumptions~\ref{ass_Omega} and \ref{ass_d_n} hold, and that $n$ large enough.  Suppose that an arbitrary $x\in\mathbb{R}^d$ satisfies Assumption~\ref{ass_x}.  Then $L_{x,\epsilon}$ as defined in (\ref{eq:def_L_x_gamma}) satisfies
\begin{align}
\label{eq:L_prob}
\PP(L_{x,\epsilon}) \ge 1 - (5 s_x'' + 9) n^{-\gamma/2} - 4 s_x'' \exp\left( - J_1 \dfrac{n^{1-\gamma/2} \epsilon^2}{\sqrt{\gamma\log n}} \right).
\end{align}
\end{theorem}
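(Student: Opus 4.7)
The plan is to obtain $L_{x,\epsilon}$ as an intersection of the two ``master'' high-probability events already analyzed in Sections~\ref{sec_est_Delta_alpha} and \ref{sec_est_Omega}, namely $H'_{x,\epsilon}$ from Theorem~\ref{thm:S_x_p_est_1} and $E_n$ from Proposition~\ref{prop:Zhao14_Thm_IV.5}. Theorem~\ref{thm:S_x_p_est_1} gives $\PP(H'_{x,\epsilon}) \ge 1 - (5 s_x'' + 8) n^{-\gamma/2} - 4 s_x'' \exp(- J_1 n^{1-\gamma/2}\epsilon^2/\sqrt{\gamma\log n})$, while Proposition~\ref{prop:Zhao14_Thm_IV.5} (valid under Assumptions~\ref{ass_Omega} and \ref{ass_d_n}, by Proposition~\ref{thm_Omega}) gives $\PP(E_n) \ge 1 - n^{-\gamma/2}$. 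A union bound then yields exactly the probability stated in (\ref{eq:L_prob}), so everything reduces to showing the deterministic inclusion $H'_{x,\epsilon} \cap E_n \subset L_{x,\epsilon}$, i.e., to verifying on $H'_{x,\epsilon} \cap E_n$ the support identity $\wt S_x' = S_x'$ and the entrywise error bound on $\wh\beta_i(x) - \beta^*_i(x)$ for $i \in S_x'$.

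For the support identity, I would argue as follows. On $H'_{x,\epsilon}$ we have $\wt S_x'' = S_x''$, hence $|\wt\Delta\alpha(x)|$ and $|\Delta\alpha(x)|$ share the same nonzero pattern. On $E_n$, Proposition~\ref{thm_Omega} (using Assumption~\ref{ass_Omega}) gives $\sgn(\wt\Omega-I_d) = \sgn(\Omega-I_d)$, so $|\wt\Omega - I_d|$ and $|\Omega - I_d|$ share the same nonzero pattern row by row. Since $S_x'$ and $\wt S_x'$ (cf.~(\ref{eq:def_bar_S_x}) and (\ref{eq:def_wt_S_x})) are each defined as the set of row indices $i$ for which the row of $|\cdot - I_d|$ shares an overlapping nonzero component with $|\cdot\Delta\alpha(x)|$, the two sets coincide, yielding $\wt S_x' = S_x'$.

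For the error bound, I would use the add-and-subtract decomposition
\begin{align*}
\wh\beta_i(x) - \beta^*_i(x) = [\wt\Omega - \Omega]_{i\cdot}\,\wt\Delta\alpha(x) \;+\; [\Omega - I_d]_{i\cdot}\bigl(\wt\Delta\alpha(x) - \Delta\alpha(x)\bigr).
\end{align*}
For the second piece, both $\wt\Delta\alpha_j(x_j)$ and $\Delta\alpha_j(x_j)$ vanish for $j \notin S_x''$ on $H'_{x,\epsilon}$, so only coordinates $j \in S_x''$ contribute, where $|\wt\Delta\alpha_j(x_j) - \Delta\alpha_j(x_j)| < 2\epsilon$; together with the fact (using $[\Omega]_{ii}\ge 1$ for an inverse correlation matrix and $\|\Omega\|_\infty \le M$) that $\|\Omega - I_d\|_\infty \le M - 1$, this bounds the second piece by $2(M-1)\epsilon$. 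For the first piece, again only $j \in S_x''$ contributes since $\wt\Delta\alpha_j(x_j) = 0$ for $j \notin S_x''$; here I invoke Assumption~\ref{ass_x} through the inclusion (\ref{eq:B_subset_A_n}) to get $\alpha_{j|y}(x_j) \in [-a_n,a_n]$ and hence $|\Delta\alpha_j(x_j)| \le 2 a_n = 2\sqrt{\gamma\log n}$, so that $|\wt\Delta\alpha_j(x_j)| \le 2\sqrt{\gamma\log n} + 2\epsilon$. Combining with $\|\wt\Omega - \Omega\|_\infty \le J_2 \kappa M s \lambda_n$ on $E_n$ (Proposition~\ref{thm_Omega}) bounds the first piece by $(2\sqrt{\gamma\log n} + 2\epsilon)\,J_2 \kappa M s \lambda_n$, which is precisely the remaining two terms in the definition of $L_{x,\epsilon}$.

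The main obstacle is bookkeeping rather than ideas: one must keep careful track of which sums are restricted to $S_x''$ (so that the moderate-$F$ bound $|\Delta\alpha_j(x_j)| \le 2a_n$ is available) and use the slightly sharper $\|\Omega - I_d\|_\infty \le M-1$ rather than the crude $M+1$. Assumption~\ref{ass_x_beta_star} on $S_x'$ is notably \emph{not} needed here, because the contributing indices are always those in $S_x''$ where $\wt\Delta\alpha$ is nonzero; this matches the fact that Theorem~\ref{thm_L}'s hypotheses invoke only Assumption~\ref{ass_x}.
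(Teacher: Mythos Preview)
Your proposal is correct and follows essentially the same approach as the paper: both establish the inclusion $H'_{x,\epsilon}\cap E_n\subset L_{x,\epsilon}$ (using $\wt S_x''=S_x''$ together with $\sgn(\wt\Omega-I_d)=\sgn(\Omega-I_d)$ for support recovery, and the $\|\Omega-I_d\|_\infty\le M-1$ and $|\Delta\alpha_j(x_j)|\le 2a_n$ bounds for the entrywise error), then combine (\ref{eq:H_x_p_prob}) with (\ref{eq:P_Omega_estimation}) via a union bound. The only cosmetic difference is that the paper uses the three-term splitting $[\Omega-I_d]_{i\cdot}(\wt\Delta\alpha-\Delta\alpha)+[\wt\Omega-\Omega]_{i\cdot}\Delta\alpha+[\wt\Omega-\Omega]_{i\cdot}(\wt\Delta\alpha-\Delta\alpha)$, whereas you fold the last two into $[\wt\Omega-\Omega]_{i\cdot}\wt\Delta\alpha$ and bound $|\wt\Delta\alpha_j|\le 2\sqrt{\gamma\log n}+2\epsilon$ directly; the resulting bounds are identical.
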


\begin{proof}
The proof can be found in Section~\ref{sec:proof_thm_L}.
\end{proof}


\subsection{Estimation of the copula part}
\label{sec_est_copula_part}

Recall the event $L_{x,\epsilon}$ as defined in (\ref{eq:def_L_x_gamma}).  Then, we define the event
\begin{align}
L'_{x,\epsilon} = L_{x,\epsilon} \cap \left( \cap_{i\in S_x'} \cap_{y\in\{0,1\}}\left\{ |\wh\alpha_{i|y}(x_i)-\alpha_{i|y}(x_i)|<\epsilon \right\} \right).
\label{eq:def_L_p_x}
\end{align}

Assumption~\ref{ass_x_beta_star} states the last piece of condition we need for our performance guarantee of the estimation of the copula part, which we state in Theorem~\ref{thm_est_copula_part}.

\begin{theorem}
\label{thm_est_copula_part}
Suppose that Assumptions~\ref{ass_Omega} and \ref{ass_d_n} hold, and that $n$ large enough.  In addition, suppose that an arbitrary $x\in\mathbb{R}^d$ satisfies Assumptions~\ref{ass_x} and \ref{ass_x_beta_star}.  Then, on the event $L'_{x,\epsilon}$ as defined in (\ref{eq:def_L_p_x}), we have
\begin{align}
& \left| \left[(\wh\alpha_0 + \wh\alpha_1)^T \wh\beta  - (\alpha_0 + \alpha_1 )^T \beta^*\right](x) \right| \nonumber \\
& \le 2 \epsilon \|\beta^*(x)\|_{\ell_1} + 4s_x' \left(\sqrt{\gamma\log(n)}+\epsilon\right) \left[ (M-1) \epsilon + J_2 \kappa M s \lambda_n \left( \sqrt{\gamma \log(n)} + \epsilon \right) \right].
\label{eq:L_p_x_error_bound}
\end{align}
Furthermore, the event $L'_{x,\epsilon}$ satisfies
\begin{align}
\PP(L'_{x,\epsilon}) \ge 1 - (2 s_x' + 5 s_x''+9) n^{-\gamma/2} - (4s_x'+4s_x'') \exp\left( - J_1 \dfrac{n^{1-\gamma/2} \epsilon^2}{\sqrt{\gamma\log(n)}}  \right).
\label{eq:L_p_x_gamma_prob}
\end{align}
\end{theorem}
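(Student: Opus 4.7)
The plan is to work entirely on the event $L'_{x,\epsilon}$ (which in particular contains $L_{x,\epsilon}$ and the event $\{\wt S_x'=S_x'\}$), decompose the error by an add/subtract trick, and then exploit joint sparsity so that the resulting sums range only over $i\in S_x'$, at which coordinates Assumption~\ref{ass_x_beta_star} and Theorem~\ref{thm_L} give us sharp pointwise control.

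First I would write
\begin{align}
(\wh\alpha_0+\wh\alpha_1)^T\wh\beta - (\alpha_0+\alpha_1)^T\beta^*
= \bigl[(\wh\alpha_0+\wh\alpha_1)-(\alpha_0+\alpha_1)\bigr]^T\wh\beta + (\alpha_0+\alpha_1)^T(\wh\beta-\beta^*), \nonumber
\end{align}
evaluated at $x$, and bound the two pieces separately. For the second piece, note that $\beta^*(x)$ is supported on $S_x\subset S_x'$, while on $L_{x,\epsilon}$ the estimator $\wh\beta(x)$ is supported on $\wt S_x'=S_x'$ by (\ref{eq:wh_beta_support}); hence $\wh\beta(x)-\beta^*(x)$ is supported on $S_x'$. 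By Assumption~\ref{ass_x_beta_star}, for each $i\in S_x'$ we have $|\alpha_{i|y}(x_i)|\le a_n=\sqrt{\gamma\log n}$, so $|\alpha_{i|0}(x_i)+\alpha_{i|1}(x_i)|\le 2\sqrt{\gamma\log n}$, and Theorem~\ref{thm_L} yields the coordinatewise bound $|\wh\beta_i(x)-\beta^*_i(x)|\le 2(M-1)\epsilon+2J_2\kappa M s\lambda_n(\sqrt{\gamma\log n}+\epsilon)$. Summing over the $s_x'$ coordinates gives a bound of $4 s_x'\sqrt{\gamma\log n}\,[(M-1)\epsilon+J_2\kappa M s\lambda_n(\sqrt{\gamma\log n}+\epsilon)]$ for the second piece.

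For the first piece, the supports of $\wh\beta(x)$ and $\beta^*(x)$ are both contained in $S_x'$, so the inner product reduces to a sum over $i\in S_x'$. On $L'_{x,\epsilon}$, Assumption~\ref{ass_x_beta_star} and the extra event in (\ref{eq:def_L_p_x}) force $|\wh\alpha_{i|y}(x_i)-\alpha_{i|y}(x_i)|<\epsilon$ for every $i\in S_x'$ and $y\in\{0,1\}$, so $|(\wh\alpha_{i|0}+\wh\alpha_{i|1}-\alpha_{i|0}-\alpha_{i|1})(x_i)|<2\epsilon$. I would then split $|\wh\beta_i(x)|\le|\beta^*_i(x)|+|\wh\beta_i(x)-\beta^*_i(x)|$, so that the first piece is bounded by $2\epsilon\sum_{i\in S_x'}|\beta^*_i(x)|+2\epsilon\sum_{i\in S_x'}|\wh\beta_i(x)-\beta^*_i(x)|\le 2\epsilon\|\beta^*(x)\|_{\ell_1}+4 s_x'\epsilon\,[(M-1)\epsilon+J_2\kappa M s\lambda_n(\sqrt{\gamma\log n}+\epsilon)]$. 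Adding the two bounds and factoring out $(\sqrt{\gamma\log n}+\epsilon)$ reproduces (\ref{eq:L_p_x_error_bound}) exactly.

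For the probability bound (\ref{eq:L_p_x_gamma_prob}), I would combine the Theorem~\ref{thm_L} estimate (\ref{eq:L_prob}) with a union bound over the $2s_x'$ events $\{|\wh\alpha_{i|y}(x_i)-\alpha_{i|y}(x_i)|\ge\epsilon\}$, $i\in S_x'$, $y\in\{0,1\}$. For each such event Lemma~\ref{thm:delta_alpha} applies since Assumption~\ref{ass_x_beta_star} guarantees $\alpha_{i|y}(x_i)\in[-a_n,a_n]$, giving a bound of $2\exp(-J_1 n^{1-\gamma/2}\epsilon^2/\sqrt{\gamma\log n})+6\log(g^{-1}(n,\gamma)/2)\exp(-n g(n,\gamma)/32)$; the tail term is absorbed into $n^{-\gamma/2}$ by Assumption~\ref{ass_n}. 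Summing the resulting $2s_x' n^{-\gamma/2}+4s_x'\exp(\cdot)$ contribution with (\ref{eq:L_prob}) yields (\ref{eq:L_p_x_gamma_prob}). No step looks genuinely hard; the main bookkeeping care is to make sure that the triangle-inequality split $|\wh\beta_i|\le|\beta^*_i|+|\wh\beta_i-\beta^*_i|$ is done before bounding, since otherwise one would pick up a factor $s_x'\|\beta^*(x)\|_{\ell_\infty}$ instead of the clean $\|\beta^*(x)\|_{\ell_1}$ in the final inequality.
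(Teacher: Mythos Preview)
Your proposal is correct and essentially matches the paper's proof. The only cosmetic difference is that the paper writes the three-term decomposition $(\wh a-a)^T\beta^* + (\alpha)^T(\wh\beta-\beta^*) + (\wh a-a)^T(\wh\beta-\beta^*)$ directly, whereas you first do a two-term split and then apply $|\wh\beta_i|\le|\beta^*_i|+|\wh\beta_i-\beta^*_i|$ inside the first term; the resulting three contributions, the coordinatewise bounds from Theorem~\ref{thm_L} and Assumption~\ref{ass_x_beta_star}, and the union-bound argument for $\PP(L'_{x,\epsilon})$ are identical to the paper's.
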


\begin{proof}
The proof can be found in Section~\ref{sec:proof_thm_est_copula_part}.
\end{proof}

So far we have left $\epsilon$, which corresponds to the estimation error (as can be see from Theorem~\ref{thm_est_copula_part}), unspecified.  Now we fix our choice of $\epsilon$ by matching the exponential term in (\ref{eq:L_p_x_gamma_prob}), namely $\exp\left( - J_1 n^{1-\gamma/2} \epsilon^2 / \sqrt{\gamma\log(n)}\right)$, to $n^{-\gamma/2}$, the rate of the exclusion probability. Hence we set $\epsilon=\epsilon_n$ for $\epsilon_n$ as introduced in (\ref{eq:choice_epsilon}).  Recall that $\gamma<2$, so we have from (\ref{eq:choice_epsilon}) the simple bound that
\begin{align}
\label{eq:choice_epsilon_simple}
\epsilon_n < (2J_1)^{-\frac{1}{2}} \log^{\frac{3}{4}}(n) n^{-\left(\frac{1}{2}-\frac{\gamma}{4}\right)}.
\end{align}

With the choice (\ref{eq:choice_epsilon}) of $\epsilon=\epsilon_n$, we state in Corollary~\ref{cor_est_copula_part} a concrete instance of Theorem~\ref{thm_est_copula_part}.  We define the event
\begin{align}
L^{\text{copula}}_{x,n} = L'_{x,\epsilon_n};
\label{eq:def_L_copula}
\end{align}
that is, $L^{\text{copula}}_{x,n} = L'_{x,\epsilon}$, for $L'_{x,\epsilon}$ introduced in (\ref{eq:def_L_p_x}), with $\epsilon$ replaced by $\epsilon_n$ in the latter.

\begin{corollary}[Estimation of the copula part]
\label{cor_est_copula_part}
Suppose that Assumptions~\ref{ass_Omega} and \ref{ass_d_n} hold, and that $n$ large enough.  In addition, suppose that an arbitrary $x\in\mathbb{R}^d$ satisfies Assumptions~\ref{ass_x} and \ref{ass_x_beta_star}.  Then, on the event $L^{\textnormal{copula}}_{x,n}$ as defined in (\ref{eq:def_L_copula}), we have
\begin{align}
& \left| \left[(\wh\alpha_0 + \wh\alpha_1)^T \wh\beta  - (\alpha_0 + \alpha_1 )^T \beta^*\right](x) \right| \nonumber \\
&\le J_1' \left( \|\beta^*(x)\|_{\ell_1}   + s_x' M \sqrt{\log(n)} \right) \log^{\frac{3}{4}}(n) n^{-\left(\frac{1}{2}-\frac{\gamma}{4}\right)}.
\label{eq:L_copula_error_bound}
\end{align}
Here $J_1'$ is some absolute constant that depends only on the absolute constant $J_1$.  Furthermore, the event $L^{\textnormal{copula}}_{x,n}$ satisfies
\begin{align}
\PP(L^{\textnormal{copula}}_{x,n})\ge 1- (6s_x'+9s_x''+9) n^{-\gamma/2},
\label{eq:P_L_copula}
\end{align}
\end{corollary}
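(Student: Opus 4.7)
The plan is to directly specialize Theorem~\ref{thm_est_copula_part} by setting $\epsilon = \epsilon_n$ as defined in (\ref{eq:choice_epsilon}), since $L^{\text{copula}}_{x,n}$ is defined precisely as $L'_{x,\epsilon_n}$. Thus Inequality~(\ref{eq:L_copula_error_bound}) and the probability bound (\ref{eq:P_L_copula}) will drop out from (\ref{eq:L_p_x_error_bound}) and (\ref{eq:L_p_x_gamma_prob}) respectively, provided we show that with this particular choice of $\epsilon$, the various terms collapse to the claimed form.

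First I would handle the probability bound, which is the cleaner of the two computations. Substituting $\epsilon_n^2 = (2J_1)^{-1} (\gamma \log n)^{3/2} n^{-(1-\gamma/2)}$ into the exponent appearing in (\ref{eq:L_p_x_gamma_prob}) gives
\begin{align}
J_1 \dfrac{n^{1-\gamma/2} \epsilon_n^2}{\sqrt{\gamma \log n}} = \dfrac{1}{2}\,\gamma \log n, \nonumber
\end{align}
so that $\exp\bigl(-J_1 n^{1-\gamma/2}\epsilon_n^2/\sqrt{\gamma \log n}\bigr) = n^{-\gamma/2}$. This is exactly the calibration that motivated the choice of $\epsilon_n$ in the first place. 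Combining the two sources of probability loss yields $(2s_x'+5s_x''+9)n^{-\gamma/2} + (4s_x'+4s_x'')n^{-\gamma/2} = (6s_x'+9s_x''+9)n^{-\gamma/2}$, matching (\ref{eq:P_L_copula}).

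For the error bound, I would plug $\epsilon = \epsilon_n$ into (\ref{eq:L_p_x_error_bound}) and simplify term by term. The first contribution $2\epsilon_n \|\beta^*(x)\|_{\ell_1}$ is already of the claimed form after invoking the simple upper bound (\ref{eq:choice_epsilon_simple}). For the second contribution $4 s_x'(\sqrt{\gamma \log n} + \epsilon_n)\bigl[(M-1)\epsilon_n + J_2 \kappa M s \lambda_n(\sqrt{\gamma \log n} + \epsilon_n)\bigr]$, the key observation is that $\epsilon_n \to 0$ (for $n$ large enough), so $\sqrt{\gamma \log n} + \epsilon_n$ is of order $\sqrt{\log n}$; meanwhile, Assumption~\ref{ass_d_n} states $\kappa s \sqrt{\log n}\,\lambda_n = o(\epsilon_n)$, which lets me absorb the term $J_2 \kappa M s \lambda_n \sqrt{\gamma \log n}$ into $O(M \epsilon_n)$ and dominate the lower-order $J_2 \kappa M s \lambda_n \epsilon_n$ as well. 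Hence the bracket is bounded by a constant multiple of $M \epsilon_n$, and the whole second contribution is bounded by a constant multiple of $s_x' M \sqrt{\log n}\, \epsilon_n$. Applying (\ref{eq:choice_epsilon_simple}) once more packages this as $C s_x' M \sqrt{\log n}\,\log^{3/4}(n) n^{-(1/2-\gamma/4)}$.

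Combining the two contributions and collecting everything into one absolute constant $J_1'$ depending only on $J_1$ (and implicitly on the fixed $J_2$, which is itself absolute) gives (\ref{eq:L_copula_error_bound}). The main source of care is verifying that the condition $\kappa s \sqrt{\log n}\,\lambda_n = o(\epsilon_n)$ from Assumption~\ref{ass_d_n} is what licenses dropping the $\kappa M s \lambda_n$-type corrections into the dominant $M \epsilon_n$ term; otherwise the computation is mechanical substitution.
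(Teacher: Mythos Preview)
Your proposal is correct and follows essentially the same approach as the paper's own proof: specialize Theorem~\ref{thm_est_copula_part} at $\epsilon=\epsilon_n$, use the calibration of $\epsilon_n$ to collapse the exponential term in (\ref{eq:L_p_x_gamma_prob}) to $n^{-\gamma/2}$, and invoke $\epsilon_n=o(\sqrt{\log n})$ together with the second half of Assumption~\ref{ass_d_n} to absorb the $\kappa M s\lambda_n$ corrections into the dominant $M\epsilon_n$ term before applying (\ref{eq:choice_epsilon_simple}).
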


\begin{proof}
Inequality~(\ref{eq:P_L_copula}) follows immediately from (\ref{eq:L_p_x_gamma_prob}) by the choice (\ref{eq:choice_epsilon}) of $\epsilon=\epsilon_n$.  We have $\epsilon_n=o(\sqrt{\gamma\log(n)})$, and in addition with the choice $\epsilon=\epsilon_n$, the second term in the square bracket in (\ref{eq:L_p_x_error_bound}) is dominated by the first for large $n$ by the second half of Assumption~\ref{ass_d_n}.  Then, (\ref{eq:L_copula_error_bound}) follows immediately from (\ref{eq:L_p_x_error_bound}) by bounding $\epsilon=\epsilon_n$ as in (\ref{eq:choice_epsilon_simple}) and by bounding the remaining appearances of $\gamma$ by 2.
\end{proof}


\section{Detailed study of the naive Bayes part}
\label{sec_naive_bayes}

\subsection{Outline}

Our estimation of the naive Bayes part in this section roughly parallels certain components of our estimation of the copula part in Section~\ref{sec_copula}.  In Section~\ref{sec:kernel_density_estimation}, paralleling Section~\ref{sec_est_tran}, we study the estimation of the density functions $f_{i|y}$ in a form that is suitable for the estimation of the log density ratio.  In Section~\ref{sec:sparse_estimation_Bayes_part}, paralleling Section~\ref{sec_est_Delta_alpha}, we study the sparse estimation of $\Delta\log f$, which leads to our estimation of the naive Bayes part.


\subsection{Relative deviation property of the kernel density estimator}
\label{sec:kernel_density_estimation}

We recall, for $i\in\{1,\dots,d\}$ and $y\in\{0,1\}$, the kernel density estimator $\wh f_{i|y}$ of $f_{i|y}$ and $\wh f_i$ of $f_i$ as defined in (\ref{eq:kernel_density_estimator_i_y}) and (\ref{eq:kernel_density_estimator_i}) respectively, and the set $B^f_{h_{n,i},i,y}$ as defined in (\ref{def_B_f_c}).  In words, the second term on the right hand side of (\ref{def_B_f_c}) consists of those points $t$ such that the supremum of the density $f_{i|y}(t')$ are close to $f_{i|y}(t)$ in a relative sense (by a factor of two), where $t'$ can range over an interval of length $2h_{n,i}$ centered around $t$.  The constant $2$ appearing in (\ref{def_B_f_c}) is chosen for convenience and can be replaced by any other constant larger than one.


We first obtain an inequality regarding the relative deviation from the mean of our kernel density estimators.
\begin{proposition}
\label{prop:f_rel_dev_variance}
Suppose that $t\in\mathbb{R}$ satisfies
\begin{align}
\label{eq:t_in_B_f}
t\in B^f_{h_{n,i},i,y} \quad\text{and}\quad f_{i|y}(t)\ge\underline{f_{n,i}}.
\end{align}
Then, the kernel density estimator $\wh f_{i|y}$ satisfies
\begin{align}
\label{eq:f_rel_dev_variance}
\PP\left\{ \dfrac{ | \wh f_{i|y}(t) - \EE \wh f_{i|y}(t) | }{ f_{i|y}(t) } \ge \epsilon' \right\} \le 2 \exp\left( - \dfrac{3}{8\max\left\{ \|K_i\|_{L^{\infty}} \epsilon', 3\|K_i\|_{L^2}^2 \right\}} n \epsilon'^2 f_{i|y}(t) h_{n,i} \right).
\end{align}
\end{proposition}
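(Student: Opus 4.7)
The plan is to apply Bernstein's inequality to the i.i.d.\ centered summands that define $\wh f_{i|y}(t) - \EE \wh f_{i|y}(t)$, after extracting a variance bound that is linear in $f_{i|y}(t)$ via the local relative-boundedness provided by the set $B^f_{h_{n,i},i,y}$.

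Set $W_j = h_{n,i}^{-1} K_i\bigl((X^{y,j}_i - t)/h_{n,i}\bigr)$, so that $\wh f_{i|y}(t) - \EE \wh f_{i|y}(t) = n^{-1} \sum_{j=1}^n (W_j - \EE W_j)$. First I would record the almost sure envelope $|W_j| \le \|K_i\|_{L^\infty}/h_{n,i}$, which is immediate from the definition. For the variance, I would compute
\[
\EE W_j^2 \;=\; \frac{1}{h_{n,i}^2}\int K_i^2\!\left(\frac{u-t}{h_{n,i}}\right) f_{i|y}(u)\, du \;=\; \frac{1}{h_{n,i}} \int_{-1}^{1} K_i^2(v)\, f_{i|y}(t + v h_{n,i})\, dv,
\]
where the second equality uses the fact (from the construction in Section~\ref{sec:choice_kernel_density_estimator}) that $K_i$ is supported on $[-1,1]$, so $t + v h_{n,i}$ stays in $[t - h_{n,i}, t + h_{n,i}]$. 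The two hypotheses $t \in B^f_{h_{n,i},i,y}$ and $f_{i|y}(t) \ge \underline{f_{n,i}}$ together with the definition~(\ref{def_B_f_c}) (second branch) then give the pointwise bound $f_{i|y}(t + v h_{n,i}) \le 2 f_{i|y}(t)$ for every such $v$, hence $\mathrm{Var}(W_j) \le \EE W_j^2 \le 2 f_{i|y}(t) \|K_i\|_{L^2}^2 / h_{n,i}$.

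Next I would apply the standard Bernstein inequality to $\sum_{j=1}^n (W_j - \EE W_j)$ with variance bound $n\sigma^2 = 2n f_{i|y}(t) \|K_i\|_{L^2}^2 / h_{n,i}$ and envelope $\|K_i\|_{L^\infty}/h_{n,i}$, with threshold $u = n \epsilon' f_{i|y}(t)$. The resulting exponent is
\[
-\frac{n \epsilon'^2 f_{i|y}(t) h_{n,i}}{2\bigl(2\|K_i\|_{L^2}^2 + \|K_i\|_{L^\infty}\epsilon'/3\bigr)} \;=\; -\frac{3\, n \epsilon'^2 f_{i|y}(t) h_{n,i}}{12\|K_i\|_{L^2}^2 + 2\|K_i\|_{L^\infty}\epsilon'}.
\]
Finally I would simplify the denominator using $a + b \le 2\max\{a,b\}$: namely $12\|K_i\|_{L^2}^2 + 2\|K_i\|_{L^\infty}\epsilon' \le 8\max\{3\|K_i\|_{L^2}^2, \|K_i\|_{L^\infty}\epsilon'\}$, which yields exactly the stated exponent.

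None of the steps is deep; the main obstacle is cosmetic, namely driving the Bernstein constants through in the precise form $8 \max\{\|K_i\|_{L^\infty}\epsilon',\, 3\|K_i\|_{L^2}^2\}$, which requires one to be careful about whether to use the envelope or twice the envelope and about the factor $1/3$ in the Bernstein denominator. The conceptual content is simply: compact support of $K_i$ confines the variance integral to the window where $f_{i|y}$ can be controlled by its value at $t$ up to a factor of $2$, which converts the usual $\|K\|_{L^2}^2/h$ variance of a kernel estimator into a truly \emph{relative} deviation inequality.
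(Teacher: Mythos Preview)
Your proposal is correct and follows exactly the paper's approach: bound the variance via compact support of $K_i$ and the local relative-boundedness from $B^f_{h_{n,i},i,y}$, then apply Bernstein. The one cosmetic slip is the envelope: Bernstein needs a bound on the \emph{centered} summand $W_j - \EE W_j$, which is $2\|K_i\|_{L^\infty}/h_{n,i}$ rather than $\|K_i\|_{L^\infty}/h_{n,i}$ (precisely the pitfall you flag); with that correction the Bernstein denominator becomes $12\|K_i\|_{L^2}^2 + 4\|K_i\|_{L^\infty}\epsilon'$, and then your $a+b\le 2\max\{a,b\}$ step applies cleanly with $a = 4\|K_i\|_{L^\infty}\epsilon'$, $b = 4\cdot 3\|K_i\|_{L^2}^2$ to give exactly $8\max\{\|K_i\|_{L^\infty}\epsilon',\,3\|K_i\|_{L^2}^2\}$.
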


\begin{proof}
The proof can be found in Section~\ref{sec:proof_prop:f_rel_dev_variance}.
\end{proof}

Note that, Proposition~\ref{prop:f_rel_dev_variance} suggests that $f_{i|y}(t)$ should not be too small, for otherwise the bound offered by (\ref{eq:f_rel_dev_variance}) is weak.  This, together with other considerations, lead us to concentrate on estimating the densities that satisfy a lower bound, such as that expressed by the second half of (\ref{eq:t_in_B_f}).  We also match $\epsilon'$ to $\epsilon_n$ as in (\ref{eq:choice_epsilon}).  Our relative deviation inequality for kernel density estimation is presented in Theorem~\ref{thm:f_rel_dev}.

\begin{theorem}
\label{thm:f_rel_dev}
Suppose that Assumption~\ref{ass_d_n} holds, and that $n$ large enough.  Suppose that $t\in\mathbb{R}$ satisfies condition (\ref{eq:t_in_B_f}).  Then we have
\begin{align}
\label{eq:f_rel_dev_concrete}
\PP\left\{ \dfrac{ | \wh f_{i|y}(t) - f_{i|y}(t) | }{ f_{i|y}(t) } \ge \epsilon_n \right\} \le \dfrac{1}{d} n^{-\gamma/2}.
\end{align}

\end{theorem}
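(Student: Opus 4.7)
The plan is to prove Theorem~\ref{thm:f_rel_dev} via the standard bias--variance decomposition
\[
\wh f_{i|y}(t) - f_{i|y}(t) = \bigl(\wh f_{i|y}(t) - \EE \wh f_{i|y}(t)\bigr) + \bigl(\EE \wh f_{i|y}(t) - f_{i|y}(t)\bigr),
\]
controlling the stochastic term with Proposition~\ref{prop:f_rel_dev_variance} and showing that $h_{n,i}$ and the kernel order are calibrated so that the deterministic bias is at most $\tfrac{1}{2}\epsilon_n f_{i|y}(t)$. Once that is established, the event $\{|\wh f_{i|y}(t)-f_{i|y}(t)|/f_{i|y}(t)\ge \epsilon_n\}$ is contained in the event $\{|\wh f_{i|y}(t)-\EE\wh f_{i|y}(t)|/f_{i|y}(t)\ge \tfrac{1}{2}\epsilon_n\}$, and we can invoke Proposition~\ref{prop:f_rel_dev_variance} with $\epsilon'=\tfrac{1}{2}\epsilon_n$.

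For the bias in the H\"older case, I would use the standard order-$l_i$ Taylor expansion against a $(\beta_i,L_i)$-H\"older density, which yields $|\EE\wh f_{i|y}(t)-f_{i|y}(t)|\le (L_i\|K_i\|_{L^{\infty}}/l_i!)\,h_{n,i}^{\beta_i}$. Inserting the definition of $C_i$ immediately before (\ref{eq:f_i_lower_bound}) and the choice (\ref{eq:h_n_i_concrete}) of the bandwidth, this collapses to $\tfrac{1}{2\cdot 2^{+}}\epsilon_n \underline{f_{n,i}}\le \tfrac{1}{2}\epsilon_n f_{i|y}(t)$, using the hypothesis $f_{i|y}(t)\ge \underline{f_{n,i}}$. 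For the super-smooth case, I would invoke Definition~\ref{def:super_smooth_densities} directly to get $|\EE\wh f_{i|y}(t)-f_{i|y}(t)|\le c_l H_i^{l}\log^{-l/2}(n)$ with $l=\lceil\log n\rceil$; the constraint (\ref{eq:H_i_condition_super_smooth}) forces $(H_i/\sqrt{\log n})^{\log n}$ to decay super-polynomially in $n$, so after Stirling-bounding $c_l$ the bias easily dominates the required threshold $\tfrac{1}{2}\epsilon_n\underline{f_{n,i}}$, which scales only like $\log^{7/4}(n)\,n^{-1/2-\gamma/4}$ times $H_i^{-1}$.

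Plugging $\epsilon'=\tfrac{1}{2}\epsilon_n$ into (\ref{eq:f_rel_dev_variance}), the exponent on the right-hand side is of order $n\epsilon_n^{2} f_{i|y}(t)h_{n,i}/\max\{\|K_i\|_{L^{\infty}}\epsilon_n,\|K_i\|_{L^{2}}^{2}\}$. Using $f_{i|y}(t)\ge\underline{f_{n,i}}$ and substituting (\ref{eq:h_n_i_concrete}) in the H\"older case, the product $n\epsilon_n^{2}\underline{f_{n,i}}h_{n,i}$ reduces to a constant (depending on $C_i$) times $n\,\epsilon_n^{(2\beta_i+1)/\beta_i}\,\underline{f_{n,i}}^{(\beta_i+1)/\beta_i}$; the choice (\ref{eq:f_i_lower_bound}) is engineered precisely so that this product equals $J_{\beta_i,\gamma,C_d}\log(d\cdot n^{\gamma/2})$ up to the kernel norms in the denominator, and for $J_{\beta_i,\gamma,C_d}$ large enough the exponent in (\ref{eq:f_rel_dev_variance}) exceeds $\log(2 d\cdot n^{\gamma/2})$. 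The super-smooth case is analogous with $h_{n,i}=H_i\log^{-1/2}(n)$ and $\underline{f_{n,i}}$ as in (\ref{eq:f_i_lower_bound_super_smooth}), where the simpler dependence on $n$ makes the choice of $J_{\gamma,C_d}$ immediate.

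The main obstacle is the bookkeeping in the H\"older case: reconciling the dependence of $\|K_i\|_{L^\infty}\le C_K l_i^{3/2}$ and $\|K_i\|_{L^2}^2 \le l_i$ in the denominator of the variance exponent with the exponents $(\beta_i+1)/\beta_i$ and $(2\beta_i+1)/\beta_i$ appearing after substitution, and verifying that $\max\{\|K_i\|_{L^\infty}\epsilon_n,\|K_i\|_{L^2}^2\}$ is of order $\|K_i\|_{L^2}^2$ for $n$ large so that the $\epsilon_n$-dependence comes out cleanly. Assumption~\ref{ass_d_n} enters only to ensure $\log(d\cdot n^{\gamma/2})\lesssim \log n$ so that the factor $\log(d\cdot n^{\gamma/2})$ appearing in (\ref{eq:f_i_lower_bound}) is of the same order as the $\log n$ factors that multiply $\underline{f_{n,i}}$; once this is observed, the constants $J_{\beta_i,\gamma,C_d}$ and $J_{\gamma,C_d}$ can be absorbed to deliver the stated bound $d^{-1}n^{-\gamma/2}$.
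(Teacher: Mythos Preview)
Your proposal is correct and follows essentially the same route as the paper: bias--variance decomposition, the standard H\"older (resp.\ super-smooth) bias bound calibrated via the choice of $h_{n,i}$ so that the bias is at most $\tfrac{1}{2}\epsilon_n f_{i|y}(t)$, and then Proposition~\ref{prop:f_rel_dev_variance} with $\epsilon'=\tfrac{1}{2}\epsilon_n$ together with the definitions of $\underline{f_{n,i}}$ to make the exponent large enough. One wording slip: in the super-smooth paragraph you say the bias ``dominates'' the threshold $\tfrac{1}{2}\epsilon_n\underline{f_{n,i}}$; you mean it is \emph{dominated by} that threshold.
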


\begin{proof}
The proof can be found in Section~\ref{sec:proof_thm:f_rel_dev}.
\end{proof}


\subsection{Sparse estimation of the naive Bayes part}
\label{sec:sparse_estimation_Bayes_part}


Recall that $\wt\Delta\log f$ as introduced in (\ref{eq:wt_Delta_log_f}) is the sparse estimator of $\Delta\log f$, and its construction is detailed in Section~\ref{eq:sparse_estimation_naive_Bayes_part}.  We also recall from (\ref{eq:S_f_x}) the sparsity sets and indices for the naive Bayes part.  We let
\begin{align}
\wh S^f_x = \{i: \wt\Delta\log f_i(x_i)\neq 0\} \nonumber
\end{align}
be the estimator of $S^f_x$ .  Similar to the sparse estimation of the copula part, we first consider the case $i\notin S^f_x$, i.e., $\Delta\log f_i(x_i) = 0$ and so $f_i(x_i)=f_{i|0}(x_i)=f_{i|1}(x_i)$.  Analogous to (\ref{eq:def_H_i_t}), we define the event
\begin{align}
G_{i,t} = \left\{ \wt\Delta\log f_i(t) \neq 0 \right\}.
\label{eq:def_G_i_x_i}
\end{align}


\begin{theorem}
\label{thm:S_f_x_c_est}
Suppose that Assumption~\ref{ass_d_n} holds, and that $n$ is large enough.  Suppose that an arbitrary $x\in\mathbb{R}^d$ satisfies Assumption~\ref{ass_density_x_S_f_x_c}.  Then, we have, for all $i\notin S^f_x$,
\begin{align}
\PP(G_{i,x_i}^c) \ge 1 - \dfrac{2}{d} n^{-\gamma/2}.
\label{eq:G_i_prob_S_f_x_c_individual}
\end{align}
Hence, by the union bound, we have
\begin{align}
\PP\left(\wh S^f_x \subset S^f_x\right) = \PP\left(\cap_{i\notin S^f_x} G_{i,x_i}^c\right) &\ge 1 - \dfrac{2 (d-s^f_x)}{d} n^{-\gamma/2}.
\label{eq:G_i_prob_S_f_x_c_union}
\end{align}
\end{theorem}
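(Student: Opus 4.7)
The plan is to establish the per-index bound (\ref{eq:G_i_prob_S_f_x_c_individual}) for arbitrary $i\notin S^f_x$ and then obtain (\ref{eq:G_i_prob_S_f_x_c_union}) by a straightforward union bound over the at most $d-s^f_x$ such indices, in close analogy with Theorem~\ref{thm:S_x_p_c_est} for the copula part. I want to argue that at least one of the two tests (\ref{eq:density_test_1}) and (\ref{eq:density_test_2}) in the construction of $\wt\Delta\log f_i(x_i)$ is triggered with total failure probability at most $\frac{2}{d}n^{-\gamma/2}$, so that $G_{i,x_i}^c$ holds. Since $i\notin S^f_x$ forces $f_{i|0}(x_i)=f_{i|1}(x_i)=:f_0$, I split deterministically on whether $f_0\ge\underline{f_{n,i}}$ or $f_0<\underline{f_{n,i}}$; Assumption~\ref{ass_density_x_S_f_x_c} ensures $x_i\in B^f_{h_{n,i},i,y}$ for both $y$, which supplies the uniform control of $f_{i|y}$ over the kernel window $[x_i-h_{n,i},x_i+h_{n,i}]$ that each case requires.

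In the large-density case $f_0\ge\underline{f_{n,i}}$ I would directly invoke Theorem~\ref{thm:f_rel_dev}: its hypothesis (\ref{eq:t_in_B_f}) is satisfied for both $y$, so a union bound yields, with failure probability at most $\frac{2}{d}n^{-\gamma/2}$, that $\wh f_{i|y}(x_i)\in[(1-\epsilon_n)f_0,(1+\epsilon_n)f_0]$ for both $y\in\{0,1\}$. Because the two true densities coincide, the ratio $\wh f_{i|0}(x_i)/\wh f_{i|1}(x_i)$ is then squeezed into $[(1-\epsilon_n)/(1+\epsilon_n),(1+\epsilon_n)/(1-\epsilon_n)]$, and the elementary logarithmic bounds $\log(1+x)\le x$ and $-\log(1-x)\le x/(1-x)$ give $|\log\wh f_{i|0}(x_i)-\log\wh f_{i|1}(x_i)|\le 2\epsilon_n/(1-\epsilon_n)=\tilde\delta_{n,\gamma}$, so test~(\ref{eq:density_test_2}) triggers.

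In the small-density case $f_0<\underline{f_{n,i}}$ the target is instead test~(\ref{eq:density_test_1}), namely $\wh f_i(x_i)\le 3\underline{f_{n,i}}$. The small-density clause of $B^f_{h_{n,i},i,y}$ gives $\sup_{t'\in[x_i-h_{n,i},x_i+h_{n,i}]} f_{i|y}(t')\le 2\underline{f_{n,i}}$ for both $y$, and combining this with the bias control engineered into the choice of $h_{n,i}$ in Section~\ref{sec:choice_kernel_density_estimator} yields $\EE\wh f_{i|y}(x_i)\le(1+o(1))\underline{f_{n,i}}$. I would then run a Bernstein inequality on the kernel summands in (\ref{eq:kernel_density_estimator_i_y}), with per-term sup-norm $\|K_i\|_{L^\infty}/h_{n,i}$ and variance proxy dominated by $\|K_i\|_{L^2}^2\underline{f_{n,i}}/(nh_{n,i})$ (again thanks to the uniform upper bound on $f_{i|y}$), to obtain $|\wh f_{i|y}(x_i)-\EE\wh f_{i|y}(x_i)|\le\underline{f_{n,i}}$ with failure probability at most $\frac{1}{d}n^{-\gamma/2}$ per class; a union bound over $y$ then delivers $\wh f_i(x_i)\le 3\underline{f_{n,i}}$ with failure probability at most $\frac{2}{d}n^{-\gamma/2}$.

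The main technical obstacle is this small-density case: Theorem~\ref{thm:f_rel_dev} is an explicitly \emph{relative} deviation bound whose hypothesis (\ref{eq:t_in_B_f}) excludes precisely the regime $f_0<\underline{f_{n,i}}$, so one cannot apply it as a black box and must carry out the Bernstein step by hand. The nontrivial check is that the scaling of $\underline{f_{n,i}}$ in (\ref{eq:f_i_lower_bound}) in the H\"older case and (\ref{eq:f_i_lower_bound_super_smooth}) in the super-smooth case makes $nh_{n,i}\underline{f_{n,i}}$ grow like $\log(nd)$ up to the kernel constants $\|K_i\|_{L^\infty}$ and $\|K_i\|_{L^2}^2$, which is exactly what the Bernstein exponent requires; this appears to be the very reason those lower bounds are defined as they are in Section~\ref{sec:choice_kernel_density_estimator}. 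Once that verification is in hand, the two cases combine to give (\ref{eq:G_i_prob_S_f_x_c_individual}), and the final union bound over $i\notin S^f_x$ delivers (\ref{eq:G_i_prob_S_f_x_c_union}).
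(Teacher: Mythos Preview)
Your proposal is correct and follows essentially the same route as the paper: the same deterministic split on $f_{i|y}(x_i)\gtrless\underline{f_{n,i}}$, Theorem~\ref{thm:f_rel_dev} plus a log-ratio bound in the large-density case, and a hand-run Bernstein argument using the small-density clause of $B^f_{h_{n,i},i,y}$ in the other case. One small correction: the quantity $n h_{n,i}\underline{f_{n,i}}$ actually grows like a positive power of $n$ (for instance $n^{1-\gamma/2}\sqrt{\log n}$ in the super-smooth case), not merely like $\log(nd)$, so the paper records the small-density failure probability as $\exp(-c_i' n^{c_i})$; this only strengthens your bound and the rest of your argument is unaffected.
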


\begin{proof}
The proof can be found in Section~\ref{sec:proof_thm:S_f_x_c_est}.
\end{proof}


Next, our consideration of the case $i\in S^f_x$ leads to Theorem~\ref{thm:est_naive_bayes_part} (which strengthens Theorem~\ref{thm:S_f_x_c_est}) which states that, when combining our earlier Assumption~\ref{ass_density_x_S_f_x_c} with the additional Assumption~\ref{ass_density_x_S_f_x}, we can accurately estimate the naive Bayes part with high probability.  This result is based on a bound on the probability of the following event
\begin{align}
\label{eq:L_n_b_x_n_gamma}
L^{\text{bayes}}_{x,n} = \left\{\wh S^f_x \subset S^f_x\right\} \cap \left( \cap_{i\in S^f_x} \left\{ \left| \wt\Delta\log f_i(x_i) - \Delta\log f_i(x_i) \right| < 2 \tilde\delta_{n,\gamma} \right\} \right).
\end{align}
Note that, for technical reasons, we do not require accurate identification of all nonzero components of $\Delta\log f_i$, as can be seen from (\ref{eq:L_n_b_x_n_gamma}).

\begin{theorem}[Estimation of the naive Bayes part]
\label{thm:est_naive_bayes_part}
Suppose that Assumption~\ref{ass_d_n} holds, and that $n$ is large enough.  Suppose that an arbitrary $x\in\mathbb{R}^d$ satisfies Assumptions~\ref{ass_density_x_S_f_x_c} and \ref{ass_density_x_S_f_x}.  Then, on the event $L^{\textnormal{bayes}}_{x,n}$ as defined in (\ref{eq:L_n_b_x_n_gamma}), for $\tilde\delta_{n,\gamma}$ as defined in (\ref{eq:def_density_delta}), we have
\begin{align}
\left\| \left[\Delta\log f - \wt\Delta\log f\right](x) \right\|_{\ell_1} \le 2 s^f_x \tilde\delta_{n,\gamma}.
\label{eq:error_bound_bayes}
\end{align}
In addition, the event $L^{\textnormal{bayes}}_{x,n}$ satisfies
\begin{align}
\label{eq:L_n_b_x_n_gamma_prob}
\PP (L^{\textnormal{bayes}}_{x,n}) \ge 1 - 2 n^{-\gamma/2}.
\end{align}

\end{theorem}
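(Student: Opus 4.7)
The plan is to treat the two conclusions separately. Inequality~(\ref{eq:error_bound_bayes}) is essentially built into the definition of the event $L^{\textnormal{bayes}}_{x,n}$: for $i\notin S^f_x$ we have $\Delta\log f_i(x_i)=0$, and the containment $\wh S^f_x\subset S^f_x$ forces $\wt\Delta\log f_i(x_i)=0$, so such coordinates contribute nothing to the $\ell_1$ norm; for $i\in S^f_x$ each summand is bounded by $2\tilde\delta_{n,\gamma}$ by fiat. Summing over the $s^f_x$ active indices gives the claim. All the real work is in Inequality~(\ref{eq:L_n_b_x_n_gamma_prob}).

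For the probability bound I would assemble $L^{\textnormal{bayes}}_{x,n}$ from two building blocks. The first is $\{\wh S^f_x\subset S^f_x\}$, which by Theorem~\ref{thm:S_f_x_c_est} (using Assumption~\ref{ass_density_x_S_f_x_c}) fails with probability at most $2(d-s^f_x)/d\cdot n^{-\gamma/2}$. The second is the intersection $\mathcal{R}_x$ over $i\in S^f_x$ and $y\in\{0,1\}$ of the relative deviation events $\{|\wh f_{i|y}(x_i)-f_{i|y}(x_i)|\le \epsilon_n f_{i|y}(x_i)\}$. The hypothesis $x\in A^{f,\neq}_{n,d,\gamma}$ in Assumption~\ref{ass_density_x_S_f_x} ensures that for each such $i$ the point $x_i$ lies in $B^f_{h_{n,i},i,y}$ and satisfies $f_{i|y}(x_i)\ge\underline{f_{n,i}}$, i.e.\ condition~(\ref{eq:t_in_B_f}), so Theorem~\ref{thm:f_rel_dev} applies. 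A union bound over the $2s^f_x$ events gives $\PP(\mathcal{R}_x^c)\le 2s^f_x/d\cdot n^{-\gamma/2}$. Adding the two contributions gives the total $2n^{-\gamma/2}$ bound.

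It then remains to show that on $\{\wh S^f_x\subset S^f_x\}\cap \mathcal{R}_x$ the per-coordinate bound $|\wt\Delta\log f_i(x_i)-\Delta\log f_i(x_i)|<2\tilde\delta_{n,\gamma}$ holds for every $i\in S^f_x$. Two elementary consequences of $\mathcal{R}_x$ drive the rest of the argument. First, averaging gives $\wh f_i(x_i)\ge(1-\epsilon_n)f_i(x_i)$, and since $x_i\in A^f_{n,i}$ lower bounds $f_i(x_i)$ by $3\underline{f_{n,i}}/(1-\epsilon_n)$, we conclude $\wh f_i(x_i)\ge 3\underline{f_{n,i}}$, so test~(\ref{eq:density_test_1}) is violated. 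Second, the relative bound converts to $|\log\wh f_{i|y}(x_i)-\log f_{i|y}(x_i)|\le \epsilon_n/(1-\epsilon_n)$ for both $y$. Now I split on test~(\ref{eq:density_test_2}). If it is also violated, the construction~(\ref{eq:def_wt_Delta_log_f_i_t}) yields an error of at most $2\epsilon_n/(1-\epsilon_n)=\tilde\delta_{n,\gamma}$ by triangle inequality. If it is satisfied, then $\wt\Delta\log f_i(x_i)=0$ while triangle inequality bounds $|\Delta\log f_i(x_i)|$ by $\tilde\delta_{n,\gamma}+2\epsilon_n/(1-\epsilon_n)=2\tilde\delta_{n,\gamma}$, as required (up to the strict/non-strict distinction, which is harmless after taking $n$ large enough so that $\epsilon_n$ can be replaced by a slightly larger value).

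The main obstacle is really just the second case above: there the argument is indirect, because the estimator is set to zero precisely when the estimated signal strength is small, and we must use the closeness of $\wh f_{i|y}$ to $f_{i|y}$ to transfer this smallness back to the true signal $\Delta\log f_i(x_i)$. This is the reason $\tilde\delta_{n,\gamma}$ in (\ref{eq:def_density_delta}) was deliberately defined as $2\epsilon_n/(1-\epsilon_n)$ so that the test threshold is matched to the logarithmic deviation bound inherited from Theorem~\ref{thm:f_rel_dev}; the proof essentially consists of unpacking that matching.
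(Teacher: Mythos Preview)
Your proposal is correct and follows essentially the same route as the paper's proof: both establish the containment
\[
L^{\textnormal{bayes}}_{x,n}\supset\{\wh S^f_x\subset S^f_x\}\cap\bigl(\textstyle\bigcap_{i\in S^f_x}L_{n,i,\gamma,x_i}\bigr),
\]
then combine Theorem~\ref{thm:S_f_x_c_est} with the per-coordinate relative deviation bound~(\ref{eq:P_L_n_i_gamma_t}) so that the two failure probabilities $2(d-s^f_x)/d\cdot n^{-\gamma/2}$ and $2s^f_x/d\cdot n^{-\gamma/2}$ sum exactly to $2n^{-\gamma/2}$. The only organizational difference is that the paper splits the analysis for $i\in S^f_x$ according to whether the \emph{true} signal $|\Delta\log f_i(x_i)|$ is below or above $2\tilde\delta_{n,\gamma}$, whereas you split on whether test~(\ref{eq:density_test_2}) is satisfied or violated; both dichotomies collapse to the same relative-deviation event, and your version is arguably a bit more direct since it branches on precisely the condition that determines the value of $\wt\Delta\log f_i(x_i)$.
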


\begin{proof}
The proof can be found in Section~\ref{sec:proof_thm:est_naive_bayes_part}.
\end{proof}




\section{Proofs for Section~\ref{sec:Introduction}}
\label{sec:proof_for_sec:Introduction}

\subsection{Proof of Theorem~\ref{thm:Gaussian_copula_density_ratio}}
\label{sec:proof_thm:Gaussian_copula_density_ratio}
By the assumption that $(X|Y=0)$ and $(X|Y=1)$ have the same Gaussian copula with the copula correlation matrix $\Sigma$, we have that
\begin{align}
(\alpha_y(X)|Y=y)\sim N(0,\Sigma).
\label{eq:alpha_y_X_normal}
\end{align}
We derive the density $f^y(x)$ for $y\in\{0,1\}$.  We let $\Phi_{\Sigma}$ denote the distribution function and $\phi_{\Sigma}$ denote the density function of a multivariate $N(0,\Sigma)$ distribution.  We have, for $x\in\mathbb{R}^d$,
\begin{align}
f^y(x) &= \dfrac{d}{dx} \PP(\left.X\le x\right|Y=y) = \dfrac{d}{dx} \PP(\left.\alpha_y(X)\le\alpha_y(x)\right|Y=y) \nonumber \\
&= \dfrac{d}{dx} \Phi_{\Sigma}\left( \alpha_y(x) \right) = \phi_{\Sigma}\left( \alpha_y(x) \right) \prod_{i=1}^d \dfrac{d}{d x_i} \Phi^{-1}(F_{i|y}(x_i)) \nonumber \\
&= \dfrac{1}{\sqrt{(2\pi)^d |\Sigma|}} \exp\left(-\dfrac{1}{2} (\alpha_y(x))^T \Omega \alpha_y(x) \right) \prod_{i=1}^d \dfrac{1}{ \phi(\alpha_{i|y}(x_i)) } f_{i|y}(x_i) \nonumber \\
&= \dfrac{1}{\sqrt{(2\pi)^d |\Sigma|}} \exp\left(-\dfrac{1}{2} (\alpha_y(x))^T (\Omega - I_d) \alpha_y(x) \right) \prod_{i=1}^d f_{i|y}(x_i).
\label{eq:density_f_y_x}
\end{align}
Here in the third equality we have invoked (\ref{eq:alpha_y_X_normal}).  Then, from (\ref{eq:density_f_y_x}), we have
\begin{align}
\log \dfrac{f^0(x)}{f^1(x)} &= -\dfrac{1}{2} (\alpha_0(x))^T (\Omega - I_d) \alpha_0(x) + \dfrac{1}{2} (\alpha_1(x))^T (\Omega - I_d) \alpha_1(x) + \sum_{i=1}^d \left[ \log f_{i|0}(x_i) -  \log f_{i|1}(x_i) \right], \nonumber
\end{align}
from which Equation~(\ref{eq:thm_density_ratio}) easily follows.
\qed


\section{Proofs for Section~\ref{sec:risk_bound}}
\label{sec:proof_for_sec:risk_bound}

\subsection{Proof of Proposition~\ref{prop:Gaussian_super_smooth}}
\label{sec:proof_prop:Gaussian_super_smooth}
We let $f$ be the density function of a (univariate) normal distribution with mean $\mu$ and variance $\sigma^2\ge\sigma_0^2$.  We fix arbitrary $t\in\mathbb{R}$, and $l\ge1$.  In the following $f^{(l)}$ and $\phi^{(l)}$ denote the $l$th derivative of $f$ and (the standard normal density function) $\phi$ respectively, but $K^{(l)}$ is the kernel of order $l$.  We have
\begin{align}
&\EE\left[\wh f_{K^{(l)}}(t)\right]-f(t) = \int K^{(l)}(u) \dfrac{(uh)^{l}}{l!}f^{(l)}(t+\tau u h) du \nonumber \\
&= \int K^{(l)}(u) \dfrac{(uh)^{l}}{l!} \left(\dfrac{1}{\sigma}\right)^{l+1} \phi^{(l)}\left( \dfrac{t-\mu+\tau u h}{\sigma}\right) du \nonumber \\
&= \int \dfrac{1}{\sqrt{2\pi}}(-1)^l \dfrac{(uh)^{l}}{l!} \left(\dfrac{1}{\sigma}\right)^{l+1} \exp\left[-\dfrac{(t-\mu+\tau u h)^2}{2\sigma^2}\right] H_{e,l}\left(\dfrac{t-\mu+\tau u h}{\sigma}\right) K^{(l)}(u) du \nonumber \\
&= \int \dfrac{1}{\sqrt{2\pi}}(-1)^l 2^{-l/2} \dfrac{(uh)^{l}}{l!} \left(\dfrac{1}{\sigma}\right)^{l+1} \exp\left[-\dfrac{(t-\mu+\tau u h)^2}{2\sigma^2}\right] H_{l}\left(\dfrac{t-\mu+\tau u h}{\sigma\sqrt{2}}\right) K^{(l)}(u) du \nonumber \\
&= \int \dfrac{1}{\sqrt{2\pi}}(-1)^l 2^{-l/2} \dfrac{(uh)^{l}}{l!} \left(\dfrac{1}{\sigma}\right)^{l+1} \exp\left(-t'^2\right) H_{l}\left(t'\right) K^{(l)}(u) du.
\label{eq:super_smooth_1}
\end{align}
Here the first equality follows by standard derivation for $K^{(l)}$ a kernel of order $l$ (e.g., \cite[Proposition~1.2]{TsybakovBook09}), and in there $\tau$ is some number such that $0\le \tau\le 1$, in the third equality $H_{e,l}$ is ``probablist's'' Hermite polynomial of order $l$, in the fourth equality $H_{l}$ is ``physicist's'' Hermite polynomial of order $l$, and in the last equality we have let $t'=\dfrac{t-\mu+\tau u h}{\sigma\sqrt{2}}$.  We further derive from (\ref{eq:super_smooth_1}) that
\begin{align}
\left| \EE\left[\wh f_{K^{(l)}}(t)\right]-f(t) \right| &\le \dfrac{ h^l}{\sqrt{2\pi}(l!)^{1/2}} \left(\dfrac{1}{\sigma}\right)^{l+1} \int e^{-t'^2} | H_{l}\left(t'\right) | \left[2^{-l/2} (l!)^{-1/2} \right] |u|^l |K^{(l)}(u)| du \nonumber \\
&\le \dfrac{C_{\text{Cram\'{e}r}}}{\sqrt{2\pi}(l!)^{1/2}} \left(\dfrac{1}{\sigma}\right)^{l+1} h^l \int_{-1}^1 e^{-t'^2/2} |u|^l |K^{(l)}(u)| du \nonumber \\
&\le \dfrac{C_{\text{Cram\'{e}r}}}{\sqrt{2\pi}(l!)^{1/2}} \left(\dfrac{1}{\sigma}\right)^{l+1} h^l \int_{-1}^1 |K^{(l)}(u)| du \nonumber \\
&\le \dfrac{C_{\text{Cram\'{e}r}} \|K^{(l)}\|_{L^{\infty}} }{\sqrt{\pi/2}(l!)^{1/2}} \left(\dfrac{1}{\sigma}\right)^{l+1} h^l \le c_l h^l. \nonumber
\end{align}
Here in the second inequality we have used Cram\'{e}r's inequality stating that $| H_{l}\left(t'\right) | \le C_{\text{Cram\'{e}r}} e^{t'^2/2} 2^{l/2}\sqrt{l!}$ for the absolute constant $C_{\text{Cram\'{e}r}}\le1.09$ \cite[(19) in Section~10.18]{Erdelyi53}, \cite[(22.14.17)]{Abramowitz72}.  It is easy to show that we indeed have $c_l\rightarrow 0$ by Stirling approximation and the fact that $\|K\|_{L^{\infty}}\le C_K l^{3/2}$.
\qed


\subsection{Proof of Theorem~\ref{thm:excess_risk}}
\label{sec:proof_thm:excess_risk}
With the fact that (for $\pi_0=\pi_1 = 1/2$)
\begin{align}
\eta = \dfrac{1}{2 f } f^1 = \dfrac{f^1}{f^0+f^1}, \nonumber
\end{align}
we have
\begin{align}
\dfrac{ f^0 }{ f^1 } = \dfrac{ 1-\eta }{ \eta } = \dfrac{ 1 }{ \eta } - 1, \nonumber
\end{align}
which further implies that
\begin{align}
\eta = \dfrac{ 1 }{ (f^0/f^1) + 1 }  = \dfrac{ 1 }{ e^{\log(f^0/f^1)} + 1 }.
\label{eq:eta_density_ratio}
\end{align}
We define the function $\bar\eta:\mathbb{R}\rightarrow\mathbb{R}$ as
\begin{align}
\bar\eta(t) = \dfrac{ 1 }{ e^{t} + 1 }. \nonumber
\end{align}
It is easy to deduce that $\bar\eta(0)=1/2$, and $|d\bar\eta(t)/dt|\le 1/4$ for all $t\in\mathbb{R}$.  Hence,
\begin{align}
|\bar\eta(t)-1/2|\le |t|/4.
\label{eq:eta_p_x}
\end{align}
From (\ref{eq:eta_density_ratio}) and (\ref{eq:eta_p_x}), we conclude that, for all $x\in\mathbb{R}^d$,
\begin{align}
|\eta(x)-1/2| \le \dfrac{1}{4} \left|\log(f^0/f^1)(x)\right|.
\label{eq:regression_function_to_density_ratio}
\end{align}

Now we are ready to derive the excess risk.  We have
\begin{align}
& \PP(\wh\delta_n(X)\neq Y)-\PP(\delta^*(X)\neq Y) = \EE\left( |2\eta(X)-1| \mathbbm{1}\left\{\wh \delta_n(X)\neq \delta^*(X)\right\} \right) \nonumber \\
& = \EE\left( |2\eta(X)-1| \1\left\{\wh \delta_n(X)\neq \delta^*(X)\right\} \1\left\{ X\notin A_{n,d,\gamma} \right\} \right) \nonumber \\
& + \EE\left( |2\eta(X)-1| \1\left\{\wh \delta_n(X)\neq \delta^*(X)\right\}  \1\left\{ \left|\log(f^0/f^1)(X)\right|\le \Delta(X) \right\} \1\left\{ X\in A_{n,d,\gamma} \right\} \right) \nonumber \\
& + \EE\left( |2\eta(X)-1| \1\left\{\wh \delta_n(X)\neq \delta^*(X)\right\}  \1\left\{ \left|\log(f^0/f^1)(X)\right|> \Delta(X) \right\} \1\left\{ X\in A_{n,d,\gamma} \right\} \right) \nonumber \\
& \le \PP\left(X\notin A_{n,d,\gamma}\right)  +  \dfrac{1}{2}\EE\left[ \left|\log(f^0/f^1)(X)\right| \1\left\{ \left|\log(f^0/f^1)(X)\right|\le \Delta(X) \right\}  \right] \nonumber \\
& + \EE\left( \1\left\{ \left| \left[ \wh{\log(f^0/f^1)} - \log(f^0/f^1)\right](X)\right| > \Delta(X) \right\} \1\left\{ X\in A_{n,d,\gamma} \right\} \right) \nonumber \\
& \le \PP\left(X\notin A_{n,d,\gamma}\right)  +  \dfrac{1}{2}\EE\left[ \Delta(X) \1\left\{ \left|\log(f^0/f^1)(X)\right|\le \Delta(X) \right\}  \right] \nonumber \\
& + \EE_X \left[ \PP^{\otimes 2n} \left( \1\left\{ \left| \left[\wh{\log(f^0/f^1)}-\log(f^0/f^1)\right](X)\right| > \Delta(X) \right\} \1\left\{ X\in A_{n,d,\gamma} \right\} \right) \right] \nonumber \\
& \le \PP\left(X\notin A_{n,d,\gamma}\right)  +  \dfrac{1}{2}\EE\left[ \Delta(X) \1\left\{ \left|\log(f^0/f^1)(X)\right|\le \Delta(X) \right\}  \right] + \EE\left[ 6 s_X'+ 9 s_X'' + 11 \right] n^{-\gamma/2}, \nonumber
\end{align}
which is Inequality~(\ref{eq:excess_risk_1}).  Here the first equality is a well known fact expressing the excess risk in terms of the regression function $\eta$ (e.g., \cite[Theorem~2.2]{Devroye96}), the first inequality follows by (\ref{eq:regression_function_to_density_ratio}), $|2\eta(X)-1|\le 1$, and the fact that $\wh\delta_n(X)\neq \delta^*(X)$ is possible only when $\left|\left[\wh{\log(f^0/f^1)} - \log(f^0/f^1)\right](X)\right|>|\log(f^0/f^1)(X)|$, in the second inequality $\PP^{\otimes 2n}$ denotes probability taken w.r.t. the $2n$ training samples and $\EE_X$ denotes expectation taken w.r.t. $X$, and the last inequality follows from Corollary~\ref{corollary_master_error_bound}.

Next, (\ref{eq:excess_risk_2}) follows from (\ref{eq:excess_risk_1}) by replacing $\|\beta^*(X)\|_{\ell_1}$, $s_X'$, $s_X''$ and $s^f_X$ by their constant bounds $C_{\beta^*}$, $s'$, $s''$ and $s^f$ respectively, and then invoking the margin assumption~\ref{assumption:margin_condition}.
\qed


\subsection{Proof of Theorem~\ref{thm:exclusion_set_simple_Gaussian}}
\label{sec:proof_thm:exclusion_set_simple_Gaussian}

By (\ref{eq:wt_A_n_d_gamma}) and (\ref{eq:wt_A_F_n_d_gamma}), we have
\begin{align}
&\PP\left(X \notin A_{n,d,\gamma}\right) \nonumber \\
&\le \PP\left(X \notin A^{F,1}_{n,d,\gamma}\right) + \PP\left(X \notin A^{F,2}_{n,d,\gamma}\right) + \PP\left(X \notin A^F_{n,\beta^*,\gamma}\right) + \PP\left(X \notin A^{f,=}_{n,d,\gamma} \cap A^{f,\neq}_{n,d,\gamma} \right).
\label{eq:wt_A_n_d_gamma_c_prob}
\end{align}
We bound the four terms on the right hand side of (\ref{eq:wt_A_n_d_gamma_c_prob}) separately.


\subsubsection{The term $\PP\left( X \notin A^{F,1}_{n,d,\gamma} \right)$}

We have
\begin{align}
\PP\left(X \notin A^{F,1}_{n,d,\gamma}\right) &= \dfrac{1}{2}\PP\left( \left. X \notin A^{F,1}_{n,d,\gamma}\right\vert Y=0\right) + \dfrac{1}{2}\PP\left(\left. X \notin A^{F,1}_{n,d,\gamma}\right\vert Y=1 \right) \nonumber \\
&= \PP\left( \left. X \notin A^{F,1}_{n,d,\gamma}\right\vert Y=0\right).
\label{eq:P_X_notin_A_F_1}
\end{align}
Here the second equality follows by symmetry.  Then, by (\ref{eq:wt_A_F_1}), we have
\begin{align}
\PP\left(\left. X \notin A^{F,1}_{n,d,\gamma} \right\vert Y=0 \right) &\le \sum_{i\in S''} \sum_{y\in\{0,1\}} \PP\left( \left. X_i \notin B_{n,\gamma,i,y} \right\vert Y=0 \right).
\label{eq:P_X_notin_A_F_1_Y_0}
\end{align}
We fix an arbitrary $i\in S''$.  First note that, we have that $F_{i|0}(X_i|Y=0)=\Phi(X_i|Y=0)$ follows a uniform distribution on $(0,1)$.  Hence, by (\ref{eq:B_n_gamma_i_y}), we have
\begin{align}
\PP\left( \left. X_i \notin B_{n,\gamma,i,0} \right\vert Y=0 \right) &= \PP\left( \left. F_{i|0}(X_i) < 8 g(2n,\gamma) \right\vert Y=0 \right) + \PP\left( \left. F_{i|0}(X_i) > 1 - 8 g(2n,\gamma) \right\vert Y=0 \right) \nonumber \\
& = 16 g(2n,\gamma).
\label{eq:P_X_i_notin_B_00}
\end{align}
On the other hand, the distribution of $F_{i|1}(X_i|Y=0)$ is no longer a uniform distribution and a more involved analysis is necessary.  We have
\begin{align}
&\PP\left( \left. X_i \notin B_{n,\gamma,i,1} \right\vert Y=0 \right) \nonumber \\
&= \PP\left( \left. F_{i|1}(X_i) < 8 g(2n,\gamma) \right\vert Y=0 \right) + \PP\left( \left. F_{i|1}(X_i) > 1 - 8 g(2n,\gamma) \right\vert Y=0 \right) \nonumber \\
&= \PP\left( \left. \Phi_{\mu}(X_i) < 8 g(2n,\gamma) \right\vert Y=0 \right) + \PP\left( \left. \Phi_{\mu}(X_i) > 1 - 8 g(2n,\gamma) \right\vert Y=0 \right) \nonumber \\
&= \PP\left( \left. X_i < \Phi_{\mu}^{-1}(8 g(2n,\gamma)) \right\vert Y=0 \right) + \PP\left( \left. X_i > \Phi_{\mu}^{-1} (1 - 8 g(2n,\gamma)) \right\vert Y=0 \right).
\label{eq:Phi_mismatch}
\end{align}
For the second term in (\ref{eq:Phi_mismatch}), using $\Phi_{\mu}^{-1}(t)=\Phi^{-1}(t)+\mu$, we have
\begin{align}
&\PP\left( \left. X_i > \Phi_{\mu}^{-1} (1 - 8 g(2n,\gamma)) \right\vert Y=0 \right) = \PP\left( \left. X_i > \Phi^{-1} (1 - 8 g(2n,\gamma)) + \mu \right\vert Y=0 \right) \nonumber \\
&\le \PP\left( \left. X_i > \Phi^{-1} (1 - 8 g(2n,\gamma)) \right\vert Y=0 \right) = \PP\left( \left. \Phi(X_i) > 1 - 8 g(2n,\gamma) \right\vert Y=0 \right) \nonumber \\
&= 8 g(2n,\gamma).
\label{eq:Phi_mismatch_1_0}
\end{align}
The first term in (\ref{eq:Phi_mismatch}) is more complicated.  First, we note that, for $t\le\min\{-1,-\mu\}$, we have
\begin{align}
\dfrac{\Phi(t)}{\Phi_{\mu}(t)} &= \dfrac{\Phi(t)}{\Phi(t-\mu)} \le \dfrac{\dfrac{1}{-t}\phi(t)}{\dfrac{-(t-\mu)}{1+(-(t-\mu))^2}\phi(t-\mu)} = \dfrac{1+(t-\mu)^2}{t(t-\mu)} e^{\mu^2/2} e^{-\mu t} \nonumber \\
&\le \dfrac{1+(2t)^2}{t^2} e^{\mu^2/2} e^{-\mu t} = \left(\dfrac{1}{t^2} + 4\right) e^{\mu^2/2} e^{-\mu t} \le 5 e^{\mu^2/2} e^{-\mu t}.
\label{eq:Phi_mismatch_2_0}
\end{align}
Here in the first inequality we have used (\ref{eq:Phi_phi_sym}) for $t\le 0$, and in the second inequality we have used the assumption $t\le-\mu$.  Hence, for $n$ large enough such that $\Phi_{\mu}^{-1}(8 g(2n,\gamma))\le\min\{-1,-\mu\}$, by (\ref{eq:Phi_mismatch_2_0}) with $t=\Phi_{\mu}^{-1}(8 g(2n,\gamma))$, we have
\begin{align}
&\PP\left( \left. X_i < \Phi_{\mu}^{-1}(8 g(2n,\gamma)) \right\vert Y=0 \right) = \Phi\left(\Phi_{\mu}^{-1}(8 g(2n,\gamma)\right) \nonumber \\
&\le 5 e^{\mu^2/2} e^{-\mu \Phi_{\mu}^{-1}(8 g(2n,\gamma))} \Phi_{\mu}\left( \Phi_{\mu}^{-1}(8 g(2n,\gamma)) \right) \nonumber \\
&= 5 e^{-\mu^2/2} e^{-\mu \Phi^{-1}(8 g(2n,\gamma))} (8 g(2n,\gamma)).
\label{eq:Phi_mismatch_2_1}
\end{align}
Then, invoking (\ref{eq:Phi_inv_sym}), we further deduce from (\ref{eq:Phi_mismatch_2_1}) that
\begin{align}
\PP\left( \left. X_i < \Phi_{\mu}^{-1}(8 g(2n,\gamma)) \right\vert Y=0 \right) &\le 5 e^{-\mu^2/2} \exp\left\{ \mu\sqrt{2\log \left(\dfrac{1}{2\cdot 8g(2n,\gamma)}\right)} \right\} (8 g(2n,\gamma)) \nonumber \\
&\le 5 e^{-\mu^2/2} \exp\left\{ \mu \sqrt{C\log( n^{\gamma/2}}) \right\} (8 g(2n,\gamma)) \nonumber \\
&= 5 e^{-\mu^2/2} e^{C\mu\sqrt{\gamma\log(n)}} (8 g(2n,\gamma)).
\label{eq:Phi_mismatch_2_2}
\end{align}
Plugging (\ref{eq:Phi_mismatch_2_2}) and (\ref{eq:Phi_mismatch_1_0}) into (\ref{eq:Phi_mismatch}), we have, for $J_{\mu}$ some constant dependent only on $\mu$,
\begin{align}
\PP\left( \left. X_i \notin B_{n,\gamma,i,1} \right\vert Y=0 \right) \le J_{\mu} e^{C\mu\sqrt{\gamma\log(n)}} g(2n,\gamma).
\label{eq:P_X_i_notin_B_mismatch}
\end{align}
Plugging (\ref{eq:P_X_i_notin_B_00}) and (\ref{eq:P_X_i_notin_B_mismatch}) into (\ref{eq:P_X_notin_A_F_1_Y_0}) and then in turn into (\ref{eq:P_X_notin_A_F_1}), we conclude that
\begin{align}
\PP\left( X \notin A^{F,1}_{n,d,\gamma} \right) \le J_{\mu}' s'' e^{C\mu\sqrt{\gamma\log(n)}} g(2n,\gamma).
\label{eq:P_X_notin_A_F_1_final}
\end{align}
Here $J_{\mu}'$ is another constant dependent only on $\mu$.


\subsubsection{The term $\PP\left(X \notin A^{F,2}_{n,d,\gamma}\right)$}

We have
\begin{align}
&\PP\left(X \notin A^{F,2}_{n,d,\gamma}\right) \le \sum_{i\in S''} \PP\left( X_i \notin B^{\delta}_{n,\gamma,i} \right) \nonumber \\
&= \sum_{i\in S''} \PP\left( \text{None of}~(\ref{eq:F_ratio_1}), (\ref{eq:F_ratio_2}), (\ref{eq:F_ratio_3}), (\ref{eq:F_ratio_4})~\text{is satisfied with $t$ replaced by $X_i$} \right).
\label{eq:P_X_notin_A_F_2}
\end{align}
It is elementary to show that there exists some constant $J_{\mu}''>0$, which depends only on $\mu$, such that for all $i\in S''$ and for all $t\in\mathbb{R}$,
\begin{align}
\max\left\{ \dfrac{ F_{i|0}(t) }{ F_{i|1}(t) } , \dfrac{ 1-F_{i|1}(t) }{ 1-F_{i|0}(t) } \right\} \ge 1+J_{\mu}''. \nonumber
\end{align}
In addition, under Assumption~\ref{ass_d_n}, $\bar\delta_{n,d,\gamma}, \bar\delta_{n,1,\gamma}\rightarrow 0$ as $n\rightarrow\infty$.  Then, for all $n$ large enough, the probabilities in the last line of (\ref{eq:P_X_notin_A_F_2}) are identically zero, and so we have
\begin{align}
\PP\left(X \notin A^{F,2}_{n,d,\gamma}\right)=0.
\label{eq:P_X_notin_A_F_3}
\end{align}


\subsubsection{The term $\PP\left( X \notin A^F_{n,\beta^*,\gamma} \right)$}

By (\ref{eq:wt_A_F_beta_n_gamma}) and (\ref{eq:B_subset_A_n}), we have
\begin{align}
A^F_{n,\beta^*,\gamma} &\supset \left\{x\in\mathbb{R}^d:\forall i\in S',\forall y\in\{0,1\}, x_i \in B_{n,\gamma,i,y}\right\} \nonumber \\
&=\cap_{i\in S'} \cap_{y\in\{0,1\}} \left\{x\in\mathbb{R}^d: x_i \in B_{n,\gamma,i,y}\right\} \nonumber
\end{align}
and thus
\begin{align}
\PP(X\notin A^F_{n,\beta^*,\gamma}) &= \PP\left( \left. X\notin A^F_{n,\beta^*,\gamma} \right\vert Y=0\right) \le \sum_{i\in S'} \sum_{y\in\{0,1\}} \PP\left( \left. X_i \notin B_{n,\gamma,i,y} \right\vert Y=0 \right).
\label{eq:P_X_i_notin_B_beta_star_0}
\end{align}
Here the equality follows by the same argument in the derivation of (\ref{eq:P_X_notin_A_F_1}).  We fix an arbitrary $i\in S'$.  If $i\in S''$ as well, then (\ref{eq:P_X_i_notin_B_00}) and (\ref{eq:P_X_i_notin_B_mismatch}) continue to hold.  On the other hand, if $i\notin S''$, then our job is easier, because then $(X_i|Y=0)$ and $(X_i|Y=1)$ have the same $N(0,1)$ distribution, $F_{i|0}(X_i)$ and $F_{i|1}(X_i)$ are both uniformly distributed on $(0,1)$, so (\ref{eq:P_X_i_notin_B_00}), and (\ref{eq:P_X_i_notin_B_00}) with the replacement of $B_{n,\gamma,i,0}$ by $B_{n,\gamma,i,1}$ and $F_{i|0}(X_i)$ by $F_{i|1}(X_i)$ all hold.  Combining the two cases, from (\ref{eq:P_X_i_notin_B_beta_star_0}), we conclude that
\begin{align}
\PP\left( X \notin A^F_{n,\beta^*,\gamma} \right) \le J_{\mu}' s' e^{C\mu\sqrt{\gamma\log(n)}} g(2n,\gamma).
\label{eq:P_X_notin_A_F_beta_star_final}
\end{align}


\subsubsection{The terms $\PP\left(X \notin A^{f,=}_{n,d,\gamma} \cap A^{f,\neq}_{n,d,\gamma}\right)$}

Recall that $A^{f,=}_{n,d,\gamma}$ is as defined in (\ref{eq:wt_A_f_eq_n_d_gamma}) and $A^{f,\neq}_{n,d,\gamma}$ is as defined in (\ref{eq:wt_A_f_neq_n_d_gamma}).  Note that
\begin{align}
A^{f,=}_{n,d,\gamma} \cap A^{f,\neq}_{n,d,\gamma} = \Big\{ x\in\mathbb{R}^d: &\forall i\notin S^f_x, x_i\in\cap_{y\in\{0,1\}} B^f_{h_{n,i},i,y}, \nonumber \\
\text{and}~&\forall i\in S^f_x, x_i \in A^f_{n,i} \cap \left( \cap_{y\in\{0,1\}} B^f_{h_{n,i},i,y} \right) \Big\} \nonumber \\
= \Big\{ x\in\mathbb{R}^d: &\forall i\in\{s''+1,\dots,d\}, x_i\in\cap_{y\in\{0,1\}} B^f_{h_{n,i},i,y}, \nonumber \\
\text{and}~&\forall i\in\{1,\dots,s''\}~\text{such that}~x_i=\mu/2, x_i\in\cap_{y\in\{0,1\}} B^f_{h_{n,i},i,y}, \nonumber \\
\text{and}~&\forall i\in\{1,\dots,s''\}~\text{such that}~x_i\neq\mu/2, x_i\in A^f_{n,i} \cap \left( \cap_{y\in\{0,1\}} B^f_{h_{n,i},i,y} \right) \Big\} \nonumber
\end{align}
Here the second step follows because, under the simple $(d,s'',\mu,\Sigma)$ Gaussian classification model, for all $x\in\mathbb{R}^d$, $\{s''+1,\dots,d\}\subset (S^f_x)^c$, and for all $i\in\{1,\dots,s''\}$, $\Delta\log f_i(x_i)=0$ and so $i\in (S^f_x)^c$ if and only if $x_i=\mu/2$.  For $n$ large enough, for all $i\in\{1,\dots,s''\}$, we have that (\ref{eq:density_t_large_2}) holds with $t$ replaced by $\mu/2$, and so $\mu/2\in A^f_{n,i}$.  Hence, for $n$ large enough, we have a cleaner characterization of $A^{f,=}_{n,d,\gamma} \cap A^{f,\neq}_{n,d,\gamma}$ given by
\begin{align}
A^{f,=}_{n,d,\gamma} \cap A^{f,\neq}_{n,d,\gamma} = \Big\{x\in\mathbb{R}^d: &\forall i\in\{s''+1,\dots,d\}, x_i\in \cap_{y\in\{0,1\}} B^f_{h_{n,i},i,y}, \nonumber \\
\text{and}~&\forall i\in\{1,\dots,s''\}, x_i \in A^f_{n,i} \cap \left( \cap_{y\in\{0,1\}} B^f_{h_{n,i},i,y} \right) \Big\} \nonumber \\
= \Big\{x\in\mathbb{R}^d: &\forall i\in\{1,\dots,d\}, x_i\in \cap_{y\in\{0,1\}} B^f_{h_{n,i},i,y}, \nonumber \\
\text{and}~&\forall i\in\{1,\dots,s''\}, x_i \in A^f_{n,i} \Big\}. \nonumber
\end{align}
We will proceed with this characterization.

We first show that, for all $i\in\{1,\dots,d\}$ and for $y\in\{0,1\}$, we have $B^f_{h_{n,i},i,y}=\mathbb{R}$ (recall $B^f_{h_{n,i},i,y}$ as defined in (\ref{def_B_f_c})).  It suffices to show this for $y=0$.  In this case the density function $f_{i|0}=\phi$.  We assume that $n$ is large enough such that $\phi(h_{n,i})\ge\underline{f_{n,i}}$.  By symmetry of the density function $\phi$ around zero and the monotonicity of $\phi$ on $[0,\infty)$, it suffices to show that, if $t\ge h_{n,i}$ and $\phi(t)\ge\underline{f_{n,i}}$, then $\phi(t-h_{n,i}) \le 2 \phi(t)$.  We have
\begin{align}
\dfrac{\phi(t-h_{n,i})}{\phi(t)} &= e^{h_{n,i}t-h_{n,i}^2/2} < e^{h_{n,i}t}.
\label{eq:f_ratio}
\end{align}
It is easy to derive that, for an arbitrary constant $L$,
\begin{align}
\phi(t)\ge L \underline{f_{n,i}} \Longleftrightarrow |t| \le \left[ \gamma\log(n) + 2 \log\left(\dfrac{ H_i \log^{-1}(n) }{\sqrt{2\pi} L J_{\gamma,C_d}} \right) \right]^{1/2}  \equalscolon q(n,L).
\label{eq:A_f_t}
\end{align}
In the above, for brevity, we have suppressed the display of the dependence of the function $q$ on other parameters.  Then, the restriction $\phi(t)\ge\underline{f_{n,i}}$ enforces the bound $t\le q(n,1)$, which, when plugged into (\ref{eq:f_ratio}), yields that, for $n$ large enough,
\begin{align}
\dfrac{\phi(t-h_{n,i})}{\phi(t)} &< e^{h_{n,i}t} \le e^{\log(2)} = 2 \nonumber
\end{align}
as desired.  Here the second inequality follows by the choices (\ref{eq:h_n_i_concrete_super_smooth}) of $h_{n,i}$ and (\ref{eq:H_i_condition_super_smooth}) of $H_i$.

Hence, $A^{f,=}_{n,d,\gamma} \cap A^{f,\neq}_{n,d,\gamma} = \left\{ x\in\mathbb{R}^d: \forall i\in\{1,\dots,s''\}, x_i \in A^f_{n,i} \right\}$, and it remains to bound
\begin{align}
\PP\left(X \notin A^{f,=}_{n,d,\gamma} \cap A^{f,\neq}_{n,d,\gamma}\right) &= \PP\left( \exists i\in\{1,\dots,s''\}, X_i \notin A^f_{n,i} \right) \le \sum_{ i\in S'' } \PP\left( X_i \notin A^f_{n,i} \right) \nonumber \\
&= \sum_{ i\in S'' } \PP\left( \exists y\in\{0,1\}, f_{i|y}(X_i) < \dfrac{3}{1-\epsilon_n} \underline{f_{n,i}}\right).
\label{eq:P_A_f_decompose}
\end{align}
We fix an arbitrary $i\in S''$.  We have, for $n$ large enough such that $\epsilon_n\le 1/4$, that
\begin{align}
f_{i|y}(t) < \dfrac{3}{1-\epsilon_n} \underline{f_{n,i}} &\Longrightarrow f_{i|y}(t) < 4 \underline{f_{n,i}} \Longleftrightarrow |t-\mu_y| > q(n,4)
\label{eq:f_i_y_lower_bound_t}
\end{align}
for $\mu_0=0$ and $\mu_1=\mu$.  Here the second equivalence follows by (\ref{eq:A_f_t}).  Then, from (\ref{eq:f_i_y_lower_bound_t}), we further have
\begin{align}
\exists y\in\{0,1\}, f_{i|y}(t) < \dfrac{3}{1-\epsilon_n} \underline{f_{n,i}} \Longrightarrow t \notin \left[ -q(n,4) +\mu, q(n,4) \right].
\label{eq:f_i_y_lower_bound_t_2}
\end{align}
From (\ref{eq:f_i_y_lower_bound_t_2}), we then have, for $n$ large enough,
\begin{align}
&\PP\left( \exists y\in\{0,1\}, f_{i|y}(X_i) < \dfrac{3}{1-\epsilon_n} \underline{f_{n,i}} \right) \le \PP\left( X_i < - q(n,4) +\mu \right) + \PP\left( X_i > q(n,4) \right) \nonumber \\
&= \PP\left( \left. X_i < - q(n,4) +\mu \right| Y=0 \right) + \PP\left( \left. X_i > q(n,4) \right| Y=0 \right) \nonumber \\
&\le \dfrac{1}{q(n,4)-\mu} \phi\left(q(n,4)-\mu\right) + \dfrac{1}{q(n,4)} \phi\left(q(n,4)\right) \le \dfrac{2}{q(n,4)-\mu} \phi\left(q(n,4)-\mu\right) \nonumber \\
&\le \dfrac{4}{q(n,4)}\left[ \phi(q(n,4))e^{\mu q(n,4)} e^{-\mu^2} \right] = \dfrac{16 e^{-\mu^2}}{q(n,4)} \underline{f_{n,i}} e^{\mu q(n,4)} \nonumber \\
&\le J_{\gamma,C_d,\mu} \log(n) e^{\mu \sqrt{\gamma\log(n)}} g(2n,\gamma).
\label{eq:P_f_i_y_lower_bound}
\end{align}
Here the first equality follows by the symmetry given the cases $Y=0$ and $Y=1$, the second inequality follows from (\ref{eq:Phi_phi}) and (\ref{eq:Phi_phi_sym}), the second equality follows because $\phi(q(n,L))=L \underline{f_{n,i}}$ by (\ref{eq:f_i_y_lower_bound_t}), and in the last inequality $J_{\gamma,C_d,\mu}$ is some constant dependent only on $\gamma,C_d,\mu$.

Then, from (\ref{eq:P_A_f_decompose}) and (\ref{eq:P_f_i_y_lower_bound}), we conclude that, for $n$ large enough,
\begin{align}
\PP\left(X \notin A^{f,=}_{n,d,\gamma} \cap A^{f,\neq}_{n,d,\gamma}\right) \le J_{\gamma,C_d,\mu} s'' e^{C \mu \sqrt{\gamma\log(n)}}  g(2n,\gamma).
\label{eq:P_X_notin_A_f}
\end{align}

Therefore, by the overall bound (\ref{eq:wt_A_n_d_gamma_c_prob}) and the individual bounds (\ref{eq:P_X_notin_A_F_1_final}), (\ref{eq:P_X_notin_A_F_3}), (\ref{eq:P_X_notin_A_F_beta_star_final}), and (\ref{eq:P_X_notin_A_f}), we conclude that, for $C_{\gamma,C_d,\mu}$ some constant dependent only on $\gamma,C_d,\mu$,
\begin{align}
\PP\left(X \notin A_{n,d,\gamma}\right) \le C_{\gamma,C_d,\mu} (s'+s'') e^{C \mu \sqrt{\gamma\log(n)}} g(2n,\gamma), \nonumber
\end{align}
which is (\ref{eq:case_study_P_A}).
\qed


\section{Proofs for Section~\ref{sec_copula}}
\label{sec:proof_for_sec_copula}

\subsection{Proof of Lemma~\ref{thm:delta_alpha}}
\label{sec:proof_thm:delta_alpha}

We first prove some basic building blocks toward the proof of Lemma~\ref{thm:delta_alpha} and other results in the paper.
\begin{proposition}
\label{prop:basic_facts_Phi_Phi_inv}
For all $t\ge 0$, we have
\begin{align}
\dfrac{t}{1+t^2}\phi(t) \le 1-\Phi(t) \le \dfrac{1}{t}\phi(t),
\label{eq:Phi_phi}
\end{align}
and for all $0.5\le t\le 1$, we have
\begin{align}
\Phi^{-1}(t) \le \sqrt{2\log\dfrac{1}{2(1-t)}}.
\label{eq:Phi_inv}
\end{align}
Therefore, by symmetry, for all $t\le 0$, we have
\begin{align}
\dfrac{-t}{1+(-t)^2}\phi(t) \le \Phi(t) \le \dfrac{1}{-t}\phi(t),
\label{eq:Phi_phi_sym}
\end{align}
and for all $0\le t\le 0.5$, we have
\begin{align}
\Phi^{-1}(t) \ge -\sqrt{2\log\dfrac{1}{2t}}.
\label{eq:Phi_inv_sym}
\end{align}
\end{proposition}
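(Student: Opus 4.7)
My plan is to handle the two non-symmetric inequalities (\ref{eq:Phi_phi}) and (\ref{eq:Phi_inv}) directly from the definitions of $\Phi$ and $\phi$, and then obtain the symmetric versions (\ref{eq:Phi_phi_sym}) and (\ref{eq:Phi_inv_sym}) as immediate corollaries via the identities $\Phi(-t)=1-\Phi(t)$, $\phi(-t)=\phi(t)$, and $\Phi^{-1}(t)=-\Phi^{-1}(1-t)$. These identities reduce the two symmetric statements to substitutions, so essentially all the work lies in (\ref{eq:Phi_phi}) and (\ref{eq:Phi_inv}).

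For the Mills-ratio bound (\ref{eq:Phi_phi}), the upper bound is the standard one-line argument: for $u \ge t \ge 0$, the factor $u/t \ge 1$ and $\phi'(u)=-u\phi(u)$, so
\[
1-\Phi(t)=\int_t^\infty \phi(u)\,du \le \int_t^\infty \frac{u}{t}\phi(u)\,du = \frac{\phi(t)}{t}.
\]
For the lower bound, I will introduce the auxiliary function $g(t)=\frac{t}{1+t^2}\phi(t)-(1-\Phi(t))$ and show $g(t)\le 0$ for $t\ge 0$ by noting $g(t)\to 0$ as $t\to\infty$ and checking via direct differentiation that $g'(t)=\frac{2\phi(t)}{(1+t^2)^2}>0$, so $g$ is strictly increasing and therefore below its limit. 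The only slightly fiddly step is the algebraic simplification $\frac{1-t^2}{(1+t^2)^2}-\frac{t^2}{1+t^2}+1=\frac{2}{(1+t^2)^2}$, which is routine.

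For the quantile bound (\ref{eq:Phi_inv}), I will reduce it to the sub-Gaussian tail bound $1-\Phi(s)\le \tfrac12 e^{-s^2/2}$ for $s\ge 0$: writing $s=\Phi^{-1}(t)\ge 0$ for $t\in[1/2,1]$, the claimed inequality $s\le \sqrt{2\log\frac{1}{2(1-t)}}$ is exactly equivalent to $1-t=1-\Phi(s)\le \tfrac12 e^{-s^2/2}$ after taking logs and rearranging. To establish this tail bound, I will set $h(s)=\tfrac12 e^{-s^2/2}-(1-\Phi(s))$, observe that $h(0)=0$ and $h(s)\to 0$ as $s\to\infty$, and compute $h'(s)=\phi(s)(1-s\sqrt{\pi/2})$, which is positive on $[0,\sqrt{2/\pi})$ and negative afterwards; thus $h$ is unimodal with boundary values zero, so $h\ge 0$ everywhere on $[0,\infty)$, as required.

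None of the steps is really an obstacle: the main thing to be careful about is the constants in the unimodality argument for $h$ (making sure the sign change of $h'$ happens at the claimed point and that both endpoint limits are exactly zero), and the algebraic simplification inside $g'$. After assembling these two inequalities, the symmetric forms follow in a single line each by the substitution $t\mapsto -t$ (resp.\ $t\mapsto 1-t$).
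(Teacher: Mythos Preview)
Your proof is correct and follows essentially the same route as the paper: the paper cites Gordon (1941) for the Mills-ratio bounds and reduces the quantile inequality to the sub-Gaussian tail bound $1-\Phi(x)\le \tfrac12 e^{-x^2/2}$ (citing Chiani et al.), then takes logs and rearranges exactly as you do. The only difference is that you supply self-contained derivations of both cited facts (the $g'$ monotonicity argument for Gordon's lower bound and the unimodality argument for the sub-Gaussian tail), which is a nice bonus but not a departure in strategy.
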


\begin{proof}
The proof can be found in Appendix~\ref{sec:proof_prop:basic_facts_Phi_Phi_inv}.
\end{proof}

\begin{proposition}
\label{prop:A_n_prob_bound}
Recall that $n$ is large enough such that $a_n\ge 1$.  We have, for $t$ such that $\alpha_{i|y}(t)=a_n$, that
\begin{align}
g(n,\gamma)\le 1 - F_{i|y}(t)\le 2g(n,\gamma),
\label{eq:F_edge_bound}
\end{align}
and for all $t\in\mathbb{R}$ such that $\alpha_{i|y}(t)\in [-a_n,a_n]$, that
\begin{align}
g(n,\gamma) \le F_{i|y}(t) \le 1 - g(n,\gamma).
\label{eq:F_bound}
\end{align}
\end{proposition}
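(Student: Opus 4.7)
The plan is to derive both displays directly from the Mills-ratio bounds recorded in Proposition~\ref{prop:basic_facts_Phi_Phi_inv}, using the definitions $a_n = \sqrt{\gamma \log n}$ and $g(n,\gamma) = \phi(a_n)/(2a_n)$, together with the blanket assumption $a_n \ge 1$.

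First I would handle (\ref{eq:F_edge_bound}). The defining relation $\alpha_{i|y}(t) = \Phi^{-1}(F_{i|y}(t)) = a_n$ gives $F_{i|y}(t) = \Phi(a_n)$, so the task is to bound $1 - \Phi(a_n)$. Applying (\ref{eq:Phi_phi}) with $t = a_n \ge 0$ yields
\begin{align*}
\frac{a_n}{1+a_n^2}\phi(a_n) \le 1 - \Phi(a_n) \le \frac{1}{a_n}\phi(a_n).
\end{align*}
The upper bound is exactly $2 g(n,\gamma)$. For the lower bound, I need $a_n/(1+a_n^2) \ge 1/(2 a_n)$, which rearranges to $a_n^2 \ge 1$; this is guaranteed by the blanket assumption that $n$ is large enough so that $a_n \ge 1$. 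Hence $1 - \Phi(a_n) \ge g(n,\gamma)$, giving (\ref{eq:F_edge_bound}).

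For (\ref{eq:F_bound}), I would just translate the interval condition $\alpha_{i|y}(t) \in [-a_n, a_n]$ into $F_{i|y}(t) \in [\Phi(-a_n), \Phi(a_n)]$ using monotonicity of $\Phi$. The upper endpoint is $\Phi(a_n) = 1 - (1 - \Phi(a_n)) \le 1 - g(n,\gamma)$ by the lower bound just established in (\ref{eq:F_edge_bound}). The lower endpoint is $\Phi(-a_n) = 1 - \Phi(a_n) \ge g(n,\gamma)$ by the same lower bound, using the symmetry of the standard normal. Both inequalities of (\ref{eq:F_bound}) follow.

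There is no real obstacle here; the proposition is essentially an exercise in chasing definitions through the Mills-ratio estimates, with the only subtle point being that the condition $a_n \ge 1$ is precisely what is needed to convert the $a_n/(1+a_n^2)$ factor on the left of (\ref{eq:Phi_phi}) into the cleaner $1/(2 a_n)$ appearing in the definition of $g(n,\gamma)$. The whole argument is a few lines.
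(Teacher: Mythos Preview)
Your proof is correct and follows essentially the same route as the paper: both apply the Mills-ratio bounds (\ref{eq:Phi_phi}) at $a_n$, use $a_n\ge 1$ to convert $a_n/(1+a_n^2)$ into $1/(2a_n)$, and then obtain (\ref{eq:F_bound}) from the endpoint bounds via monotonicity and the symmetry $\Phi(-a_n)=1-\Phi(a_n)$.
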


\begin{proof}
The proof can be found in Appendix~\ref{sec:proof_prop:A_n_prob_bound}.
\end{proof}

\begin{proof}[Proof of Lemma~\ref{thm:delta_alpha}]

We will start by attempting to derive a version of Inequality~(\ref{eq:ineq_delta_master_1}) but for $t$ uniformly over the interval specified by $t:\alpha_{i|y}(t)\in [-a_n,a_n]$, then specialize to our pointwise case midway.  We focus on the case $\alpha_{i|y}(t)\ge 0$ and so $\Phi(\alpha_{i|y}(t))=F_{i|y}(t)\ge 1/2$.  The analysis for the symmetric case $\alpha_{i|y}(t)<0$ is similar and is thus omitted.

We let $\xi^1, \dots, \xi^n$ be independent Uniform $(0,1)$ random variables, and let $G_n$ be their empirical distribution function.  We define the event
\begin{align}
E_F = \left\{ \sup_{u\in\left[\frac{1}{2},1-g(n,\gamma)\right]}1- G_n(u) > \frac{1}{2}(1-u) \right\}.
\label{def_E_F}
\end{align}
In words, $E_F$ is the event on which $1-G_n(u)$ is not too small relative to $1-u$, uniformly for $u$ over the interval $[1/2,1-g(n,\gamma)]$.  We can replace the constant $1/2$ in front of $(1-u)$ in (\ref{def_E_F}) by $1-\frac{1}{2}\sqrt{g(n,\gamma)/(1-u)}$, a quantity bounded below by $1/2$ for $u$ over the same interval, and Lemma~\ref{lemma:P_E_F} will continue to hold.  However, such a choice at most affects some constant multiplicative factor later on.  For brevity of display, we do not pursue such a choice.

\begin{lemma}
\label{lemma:P_E_F}
The event $E_F$ satisfies
\begin{align}
\PP(E_F) \ge 1 - 6\log(g^{-1}(n,\gamma)/2) \exp\left( -\dfrac{1}{32}n\cdot g(n,\gamma) \right).
\label{eq:prob_E_F_c}
\end{align}
\end{lemma}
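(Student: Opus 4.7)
The event $E_F$ says that the empirical survival function $1-G_n(u)$ stays at least $\tfrac{1}{2}(1-u)$ uniformly on $[\tfrac12,1-g(n,\gamma)]$; this is a standard deviation statement that I would prove by the \emph{peeling} (dyadic chaining) method combined with a one-sided Chernoff bound.

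First, introduce a dyadic grid $u_k = 1 - 2^{-k}$ for $k = 1, 2, \ldots, K$ with $K = \lceil \log_2(1/g(n,\gamma)) \rceil$. Since $1-u_k = 2^{-k}$ ranges over $[g(n,\gamma), \tfrac12]$, these points cover the interval of interest, and $K$ is at most a constant multiple of $\log(g^{-1}(n,\gamma)/2)$, which matches the logarithmic factor on the right-hand side of~(\ref{eq:prob_E_F_c}) up to the change of base between $\log_2$ and $\log$. At each grid point, $n(1-G_n(u_k)) = \sum_{j=1}^{n} \1\{\xi^j > u_k\}$ is a sum of i.i.d.\ Bernoulli$(1-u_k)$ random variables with mean $n(1-u_k)$, so the multiplicative Chernoff bound
\[
\PP\bigl(1-G_n(u_k) \leq (1-\delta)(1-u_k)\bigr) \leq \exp\bigl(-\tfrac{\delta^2}{2}\, n(1-u_k)\bigr)
\]
applies for any $\delta \in (0,1)$. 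Combining this with $1-u_k \geq g(n,\gamma)$, and choosing $\delta$ (roughly $\delta = \tfrac{1}{4}$) so that $\delta^2/2 = 1/32$, gives an individual bound of $\exp(-\tfrac{1}{32}\, n\, g(n,\gamma))$ at each $u_k$.

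Next, a union bound over the at most $K$ grid points controls all pointwise events simultaneously at a cost of a factor $K$, producing the prefactor $\log(g^{-1}(n,\gamma)/2)$ (with an absolute constant that I would track to reach $6$). To pass from the grid to all of $[\tfrac12, 1-g(n,\gamma)]$, I would use the monotonicity of both $u\mapsto 1-G_n(u)$ and $u\mapsto 1-u$: for $u \in [u_k, u_{k+1}]$, $1-G_n(u) \geq 1-G_n(u_{k+1})$, while $1-u \leq 1-u_k = 2(1-u_{k+1})$. Hence a pointwise lower bound of $(1-\delta)(1-u_{k+1})$ on the grid translates, via this interpolation, to the uniform lower bound $\tfrac{1-\delta}{2}(1-u)$ in between.

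The main obstacle is purely bookkeeping: threading the mesh ratio (dyadic vs.\ a slightly finer $\rho \in (1,2)$), the Chernoff parameter $\delta$, and the union-bound prefactor so that the conclusion is \emph{exactly} $\tfrac12(1-u)$ uniformly with exponent $\tfrac{1}{32}\,n\,g(n,\gamma)$ and prefactor $6\log(g^{-1}(n,\gamma)/2)$. If the direct dyadic choice gives only $\tfrac{3}{8}(1-u)$ after interpolation, I would tighten by choosing a smaller mesh ratio $\rho$ (so that the interpolation loses only a factor $\rho$ instead of $2$) and a correspondingly larger $\delta$; the extra mesh points this produces are absorbed into the constant $6$. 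The structural argument is otherwise routine.
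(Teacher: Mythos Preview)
Your peeling-plus-Chernoff approach is correct and, with the parameter choice $\delta=\tfrac14$ and mesh ratio $\rho=\tfrac32$ (so that $(1-\delta)/\rho=\tfrac12$ and $\delta^2/2=\tfrac{1}{32}$), the grid has at most $\lceil\log(g^{-1}/2)/\log(3/2)\rceil+1\le 6\log(g^{-1}/2)$ points once $g(n,\gamma)$ is small (which the standing assumption $a_n\ge1$ guarantees). So the constants do close.

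The paper, however, takes a different and much shorter route. It does not peel at all. Instead it (i) uses the symmetry $\xi\mapsto 1-\xi$ to rewrite $E_F$ as the event that $G_n(u)>\tfrac12 u$ for all $u\in[g,\tfrac12]$, (ii) recasts this as $\bigl|\sqrt{n}(G_n(u)-u)/\sqrt{u}\bigr|^{-}<\tfrac12\sqrt{nu}$, (iii) shrinks the right-hand side to the uniform threshold $\tfrac12\sqrt{ng}$ using $u\ge g$, and (iv) invokes a ready-made one-sided weighted empirical-process inequality from Shorack's book (Chapter~11, Section~2, Corollary~1), which directly gives the bound $6\log(g^{-1}/2)\exp(-\tfrac{1}{32}ng)$ for $\PP\bigl(\sup_{u\in[g,1/2]}|\sqrt{n}(G_n(u)-u)/\sqrt{u}|^{-}\ge\tfrac12\sqrt{ng}\bigr)$. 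The proof of that Shorack--Wellner-type corollary is itself a dyadic chaining argument, so your proposal is essentially a from-scratch reproduction of the cited black box; what you gain is self-containment, what the paper gains is a three-line proof.
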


\begin{proof}
For brevity we write $g=g(n,\gamma)$.  We have
\begin{align}
E_F &= \left\{ \sup_{u\in[\frac{1}{2},1-g]} \dfrac{1}{n}\sum_{j=1}^n \left[1-\1\{\xi^j\le u\}\right] > \frac{1}{2}(1-u) \right\} = \left\{ \sup_{u\in[\frac{1}{2},1-g]} \dfrac{1}{n}\sum_{j=1}^n \1\{\xi^j> u\} > \frac{1}{2}(1-u) \right\} \nonumber \\
& = \left\{ \sup_{u\in[g,\frac{1}{2}]} \dfrac{1}{n}\sum_{j=1}^n \1\{1-\xi^j<u\} > \frac{1}{2}u \right\}.
\label{eq:E_F_der_2}
\end{align}
Note that $1-\xi^1,\dots,1-\xi^n$ are again independent Uniform $(0,1)$ random variables, with the same joint distribution as $\xi^1,\dots,\xi^n$.  Then, from (\ref{eq:E_F_der_2}), we have
\begin{align}
\PP(E_F) &= \PP\left( \sup_{u\in[g,\frac{1}{2}]} \dfrac{1}{n}\sum_{j=1}^n \1\{\xi^j<u\} > \frac{1}{2}u \right) = \PP\left( \sup_{u\in[g,\frac{1}{2}]} \dfrac{1}{n}\sum_{j=1}^n \1\{\xi^j\le u\} > \frac{1}{2}u \right) \nonumber \\
&= \PP\left( \sup_{u\in[g,\frac{1}{2}]} G_n(u) - u > -\frac{1}{2}u \right) = \PP\left( \sup_{u\in[g,\frac{1}{2}]} \left| G_n(u) - u \right|^- < \frac{1}{2} u \right) \nonumber \\
&\ge \PP\left( \sup_{u\in[g,\frac{1}{2}]} \left|\sqrt{n}\dfrac{ G_n(u) - u}{\sqrt{u}}\right|^- < \frac{1}{2} \sqrt{n g} \right) \ge 1- 6\log(g^{-1}/2)\exp\left(-\dfrac{1}{32}n\cdot g\right), \nonumber
\end{align}
which is what we would like to show.  Here the in the fourth equality $|f|^-=-\min\{f,0\}$ for a generic function $f$, and the last inequality follows from \cite[Chapter~11, Section~2, Corollary~1]{Shorack09}.
\end{proof}

We let $A_n'\subset\mathbb{R}$ be an arbitrary interval such that $A_n'\subset[0,a_n]$.  We let $A_n''\subset\mathbb{R}$ be such that $A_n''=\left\{\Phi(\alpha_{i|y}(t))=F_{i|y}(t):\alpha_{i|y}(t)\in A_n'\right\}$.  It is easy to see that $\{t\in\mathbb{R}:\alpha_{i|y}(t)\in A_n'\} = \{t\in\mathbb{R}:F_{i|y}(t)\in A_n''\}$, and because $\Phi$ is strictly increasing, $A_n''$ is an interval in $\mathbb{R}$ as well.  In addition, by the first half of Inequality~(\ref{eq:F_edge_bound}) in Proposition~\ref{prop:A_n_prob_bound}, we have
\begin{align}
A_n''\subset\left[\dfrac{1}{2},1-g(n,\gamma)\right].
\label{eq:A_n_pp_range}
\end{align}
Now, we have
\begin{align}
&\PP\left(\sup_{t\in\mathbb{R}:\alpha_{i|y}(t)\in A_n'}|\wh\alpha_{i|y}(t)-\alpha_{i|y}(t)|\ge\epsilon\right) = \PP\left(\sup_{t\in\mathbb{R}:F_{i|y}(t)\in A_n''} \left|\Phi^{-1}(\wh F_{i|y}(t))-\Phi^{-1}(F_{i|y}(t))\right|\ge\epsilon\right) \nonumber \\
&= \PP\left(\sup_{t\in\mathbb{R}:F_{i|y}(t)\in A_n''} \left|\Phi^{-1}(G_n(F_{i|y}(t)))-\Phi^{-1}(F_{i|y}(t))\right|\ge\epsilon\right) \nonumber \\
&= \PP\left(\sup_{u\in A_n''} \left|\Phi^{-1}(G_n(u))-\Phi^{-1}(u)\right|\ge\epsilon\right).
\label{eq:delta_alpha_ineq_0}
\end{align}
Here the second equality follows because the random functions $\wh F_{i|y}(\cdot)$ and $G_n(F_{i|y}(\cdot))$ have the same probabilistic behavior.

Using the mean value theorem and Inequality~(\ref{eq:Phi_inv}), we have
\begin{align}
|\Phi^{-1}(G_n(u))-\Phi^{-1}(u)| &= (\Phi^{-1})'(\eta(u)) |G_n(u)-u| = \sqrt{2\pi}\exp\left(\dfrac{\Phi^{-1}(\eta(u))^2}{2}\right) |G_n(u)-u| \nonumber \\
&\le \sqrt{\dfrac{\pi}{2}}\dfrac{1}{1-\eta(u)} |G_n(u)-u|, \quad \forall u\in A_n''.
\label{eq:alpha_Taylor}
\end{align}
Here
\begin{align}
\eta(u) \in \left[ \min\{G_n(u),u\}, \max\{G_n(u),u\} \right], \quad \forall u\in A_n''.
\label{def_xi}
\end{align}
We proceed to obtain a lower bound on $1-\eta(u)$ for $u\in A_n''$.  Starting from relationship~(\ref{def_xi}), on the event $E_F$ (as defined in (\ref{def_E_F})), we have
\begin{align}
1-\eta(u) &\ge 1 - \max\{G_n(u),u\} = \min\{ 1-G_n(u), 1-u \} \nonumber \\
&\ge \min\left\{ \dfrac{1}{2}(1-u), 1-u \right\} =\dfrac{1}{2}(1-u), \quad \forall u\in A_n''.
\label{eq:xi_upper_bound}
\end{align}
Here the second inequality follows because, for all $u\in A_n''$ and so $u\in[1/2, g(n,\gamma)]$ by (\ref{eq:A_n_pp_range}), $1-G_n(u)\ge\dfrac{1}{2}(1-u)$ on the event $E_F$.

Combining Inequalities~(\ref{eq:alpha_Taylor}) and (\ref{eq:xi_upper_bound}) yields, on the event $E_F$, that
\begin{align}
|\Phi^{-1}(G_n(u))-\Phi^{-1}(u)| &\le \sqrt{2\pi}\dfrac{1}{1-u} |G_n(u)-u|, \quad \forall u\in A_n''.
\label{eq:delta_alpha_ineq_1}
\end{align}
Hence we have, from (\ref{eq:delta_alpha_ineq_0}) and (\ref{eq:delta_alpha_ineq_1}),
\begin{align}
&\PP\left(\sup_{t\in\mathbb{R}:\alpha_{i|y}(t)\in A_n'}|\wh\alpha_{i|y}(t)-\alpha_{i|y}(t)|\ge\epsilon\right) \nonumber \\
&= \PP\left( \left\{\sup_{u\in A_n''} \left|\Phi^{-1}(G_n(u))-\Phi^{-1}(u)\right|\ge\epsilon \right\}\cap E_F\right) + \PP\left( \left\{\sup_{u\in A_n''} \left|\Phi^{-1}(G_n(u))-\Phi^{-1}(u)\right|\ge\epsilon\right\}\cap E_F^c\right) \nonumber \\
&\le \PP\left( \left\{ \sup_{u\in A_n''}\sqrt{2\pi}\dfrac{1}{1-u} |G_n(u)-u| \ge\epsilon \right\} \cap E_F \right) + \PP( E_F^c) \nonumber \\
&\le \PP\left( \sup_{u\in A_n''}|G_n(u)-u| \ge \frac{1}{\sqrt{2\pi}}(1-u)\epsilon  \right) + \PP( E_F^c).
\label{eq:delta_alpha_ineq_2}
\end{align}
Here in the first inequality we have invoked (\ref{eq:delta_alpha_ineq_1}) on the event $E_F$.

At this point we can invoke \cite[Chapter~11, Section~2, Corollary~1]{Shorack09}, and follow essentially the same reasoning as the proof of Lemma~\ref{lemma:P_E_F}, to continue deriving the uniform version of Inequality~(\ref{eq:ineq_delta_master_1}).  Because the pointwise version suffices for our purpose and offers a somewhat faster convergence rate, we switch to focus on this case instead.  We choose $A_n''$ to be the singleton set $A_n''=\{F_{i|y}(t)\}=\{u\}$.  Then, applying \cite[Chapter~11, Section~1, Inequality~(ii)]{Shorack09} to the first term in Inequality~(\ref{eq:delta_alpha_ineq_2}) yields
\begin{align}
\PP(|\wh\alpha_{i|y}(t)-\alpha_{i|y}(t)|\ge\epsilon) & \le \PP\left( \left. |G_n(u)-u| \ge \frac{1}{\sqrt{2\pi}}(1-u)\epsilon \right|_{u=F_{i|y}(t)} \right) + \PP( E_F^c) \nonumber \\
&\le 2 \exp\left( -\dfrac{1}{4\pi} (1-F_{i|y}(t)) n \epsilon^2 \Psi\left(\epsilon/\sqrt{2\pi}\right) \right) + \PP( E_F^c) \nonumber \\
&\le 2 \exp\left( -\dfrac{1}{6\pi} (1-F_{i|y}(t)) n \epsilon^2  \right) + \PP( E_F^c) .
\label{eq:ineq_delta_alpha}
\end{align}
Here in the second inequality the function $\Psi$ is as defined in \cite[Chapter~11, Section~1, (2)]{Shorack09}, and in the third inequality we have invoked \cite[Chapter~11, Section~1, Proposition~1(12)]{Shorack09} and the assumption that $\epsilon\le\sqrt{2\pi}$.  Then, Inequality~(\ref{eq:ineq_delta_alpha_1}) is obtained by incorporating Inequality~(\ref{eq:ineq_delta_alpha}) with the bound on $\PP( E_F^c)$ as in (\ref{eq:prob_E_F_c}), and with the symmetric case $\alpha_{i|y}(t)< 0$.  From Inequality~(\ref{eq:ineq_delta_alpha_1}), if we further lower bound $\min\left\{F_{i|y}(t),1-F_{i|y}(t)\right\}$ using Inequality~(\ref{eq:F_bound}), we obtain Inequality~(\ref{eq:ineq_delta_master_1}).
\end{proof}


\subsection{Proof of Theorem~\ref{thm:S_x_p_c_est}}
\label{sec:proof_thm:S_x_p_c_est}

We fix an arbitrary $x\in\mathbb{R}^d$ and an arbitrary $i\notin S_x''$.  We let
\begin{align}
t=x_i. \nonumber
\end{align}
By the construction of our test, we have
\begin{align}
\PP(\wt\Delta\alpha_i(t)=0) \ge \min\left\{ \PP\left(\text{test}~(\ref{eq:F_test_1})~\text{succeeds}\right), \PP\left(\text{test}~(\ref{eq:F_test_2})~\text{succeeds}\right) \right\}.
\label{eq:P_wt_Delta_alpha_equal_zero}
\end{align}

First, suppose that $t$ satisfies
\begin{align}
F_i(t) \le g(2n,\gamma) \quad \text{or} \quad F_i(t) \ge 1 - g(2n,\gamma).
\label{eq:F_t_small}
\end{align}
Then, either $F_i(t) \le g(2n,\gamma)$ or $1 - F_i(t) \le g(2n,\gamma)$ and so $F_i(t) ( 1 - F_i(t) ) \le g(2n,\gamma)$.  In this case, we focus on test~(\ref{eq:F_test_1}).  Note that we would like one of the Inequalities in (\ref{eq:F_test_1}) to hold so that we set $\wt\Delta\alpha_i(t)=0$.  We set $\epsilon = 3 g(2n,\gamma)$.  By Bernstein's inequality with $\VV\left[\wh F_i(t)\right] = \dfrac{1}{2n} F_i(t) (1-F_i(t))$, we have
\begin{align}
\PP\left( \wh F_i(t) - F_i(t) >\epsilon \right) & \le \exp\left( - \dfrac{ 4 n^2 \epsilon^2 }{ 4 n F_i(t) ( 1 - F_i(t) ) + 8 n \epsilon / 3 } \right) \nonumber \\
&\le \exp\left( -3 n \cdot g(2n,\gamma) \right). \nonumber
\end{align}
Hence, we conclude that, by (\ref{eq:P_wt_Delta_alpha_equal_zero}) and test~(\ref{eq:F_test_1}), for $\epsilon = 3 g(2n,\gamma)$ as chosen above, for all $t$ such that $F_i(t)\le g(2n,\gamma)$ (i.e., the first half of (\ref{eq:F_t_small})), we have
\begin{align}
\PP(\wt\Delta\alpha_i(t)=0) &\ge \PP\left( \wh F_i(t) \le 4 g(2n,\gamma) \right) \ge \PP\left( \wh F_i(t) - F_i(t) \le \epsilon \right) \nonumber \\
&\ge 1 - \exp\left( - 3 n\cdot g(2n,\gamma) \right) \ge 1 - 8\dfrac{1}{d}n^{-\gamma/2}. \nonumber
\end{align}
Here the last line follows by (\ref{eq:thm:S_x_p_c_est_n_condition}).  By similar reasoning, the same conclusion follows for all $t$ such that $F_i(t) \ge 1 - g(2n,\gamma)$ (i.e., the second half of (\ref{eq:F_t_small})).  Therefore we conclude that (\ref{eq:H_i_prob_S_x_p_c_individual}) holds for $t=x_i$ in the regime specified by (\ref{eq:F_t_small}).

Next suppose that, in contrast to~(\ref{eq:F_t_small}), $t$ is such that
\begin{align}
g(2n,\gamma)<F_i(t) < 1 - g(2n,\gamma).
\label{eq:F_t_large}
\end{align}
In this case, test~(\ref{eq:F_test_1}) is more likely to fail, so we switch to study test (\ref{eq:F_test_2}).  Note that we set $\wt\Delta\alpha_i(t)=0$ (the desirable case) if Inequality~(\ref{eq:F_test_2}) holds, and so we upper bound the probability that Inequality~(\ref{eq:F_test_2}) fails.

Note that when Inequality~(\ref{eq:F_test_2}) fails, at least one of the following four inequalities
\begin{align}
\max\{ \wh F_{i|0}(t), \wh F_{i|1}(t) \} &> (1+\bar\delta_{n,d,\gamma}) F_i(t), \nonumber \\
\min\{ \wh F_{i|0}(t), \wh F_{i|1}(t) \} &< (1-\bar\delta_{n,d,\gamma}) F_i(t), \nonumber \\
\max\{ 1 - \wh F_{i|0}(t), 1 - \wh F_{i|1}(t) \} &> (1+\bar\delta_{n,d,\gamma}) ( 1 - F_i(t) ), \nonumber \\
\min\{ 1 - \wh F_{i|0}(t), 1 - \wh F_{i|1}(t) \} &< (1-\bar\delta_{n,d,\gamma}) ( 1 - F_i(t) ) \nonumber
\end{align}
must hold.  (If none of the these inequalities holds, it is easy to see that Inequality~(\ref{eq:F_test_2}) must hold.)  Thus,
\begin{align}
&\left\{  \max\left\{ \dfrac{ \max\{ \wh F_{i|0}(t),\wh F_{i|1}(t) \} }{ \min\{ \wh F_{i|0}(t),\wh F_{i|1}(t) \} } ,  \dfrac{ \max\{ 1-\wh F_{i|0}(t),1-\wh F_{i|1}(t) \} }{ \min\{ 1-\wh F_{i|0}(t),1-\wh F_{i|1}(t) \} }  \right\} \le \dfrac{1+\bar\delta_{n,d,\gamma}}{1-\bar\delta_{n,d,\gamma}}  \right\}^c \nonumber \\
& \subset \left\{ \max\{ \wh F_{i|0}(t),\wh F_{i|1}(t) \} > (1+\bar\delta_{n,d,\gamma}) F_i(t) \right\} \nonumber \\
& \cup \left\{ \min\{ \wh F_{i|0}(t),\wh F_{i|1}(t) \} < (1-\bar\delta_{n,d,\gamma}) F_i(t) \right\} \nonumber \\
& \cup \left\{ \max\{ 1 - \wh F_{i|0}(t), 1 - \wh F_{i|1}(t) \} > (1+\bar\delta_{n,d,\gamma}) ( 1 - F_i(t) ) \right\} \nonumber \\
& \cup \left\{ \min\{ 1 - \wh F_{i|0}(t), 1 - \wh F_{i|1}(t) \} < (1-\bar\delta_{n,d,\gamma}) ( 1 - F_i(t) ) \right\} \nonumber \\
& = \left\{ \wh F_{i|0}(t) > (1+\bar\delta_{n,d,\gamma}) F_{i|0}(t) \right\} \cup \left\{ \wh F_{i|1}(t) > (1+\bar\delta_{n,d,\gamma}) F_{i|1}(t) \right\} \nonumber \\
& \cup \left\{ \wh F_{i|0}(t) < (1-\bar\delta_{n,d,\gamma}) F_{i|0}(t) \right\} \cup \left\{ \wh F_{i|1}(t) < (1-\bar\delta_{n,d,\gamma}) F_{i|1}(t) \right\} \nonumber \\
& \cup \left\{ 1 - \wh F_{i|0}(t) > (1+\bar\delta_{n,d,\gamma}) ( 1 - F_{i|0}(t) ) \right\} \cup \left\{ 1 - \wh F_{i|1}(t) > (1+\bar\delta_{n,d,\gamma}) ( 1 - F_{i|1}(t) ) \right\} \nonumber \\
& \cup \left\{ 1 - \wh F_{i|0}(t) < (1-\bar\delta_{n,d,\gamma}) ( 1 - F_{i|0}(t) ) \right\} \cup \left\{ 1 - \wh F_{i|1}(t) < (1-\bar\delta_{n,d,\gamma}) ( 1 - F_{i|1}(t) ) \right\}.
\label{eq:F_t_ratio_breakdown}
\end{align}
Here the last step holds because in the current case $F_i(t)=F_{i|0}(t)=F_{i|1}(t)$.  Hence it suffices to bound the individual probabilities of the eight events whose union constitute the last step of the set relationship~(\ref{eq:F_t_ratio_breakdown}).  Recall that for $t$ in the regime specified by (\ref{eq:F_t_large}), both $F_i(t)$ and $1-F_i(t)$ are lower bounded by $g(2n,\gamma)$, which allows us to apply appropriate Chernoff bounds for relative deviations.  For example, for the first of the eight events, by considering i.i.d. Bernoulli random variables $\1\{X^{0,j}_i\le t\}$, $j\in\{1,\dots,n\}$ with mean $F_{i|0}(t)>g(2n,\gamma)$, we have from \cite[Inequality~(6)]{Hagerup90} that
\begin{align}
&\PP\left( \wh F_{i|0}(t) > (1+\bar\delta_{n,d,\gamma}) F_{i|0}(t) \right) \le \exp\left( - \dfrac{1}{3} n\bar\delta_{n,d,\gamma}^2 F_{i|y}(t)\right) \nonumber \\
&\le \exp\left( - \dfrac{1}{3} n\bar\delta_{n,d,\gamma}^2 g(2n,\gamma) \right)  = \dfrac{1}{d} n^{-\gamma/2},
\label{eq:F_dev}
\end{align}
while for the last term, by considering i.i.d. Bernoulli random variables $1-\1\{X^{1,j}_i\le t\}$, $j\in\{1,\dots,n\}$ with mean $1-F_{i|1}(t)>g(2n,\gamma)$, we have from \cite[Inequality~(7)]{Hagerup90} that
\begin{align}
&\PP\left( 1 - \wh F_{i|1}(t) < (1-\bar\delta_{n,d,\gamma}) ( 1 - F_{i|1}(t) ) \right) \le \exp\left( - \dfrac{1}{2} n\bar\delta_{n,d,\gamma}^2 ( 1-F_{i|1}(t) ) \right) \nonumber \\
&\le \exp\left( - \dfrac{1}{3} n\bar\delta_{n,d,\gamma}^2 g(2n,\gamma) \right)  = \dfrac{1}{d} n^{-\gamma/2}.
\label{eq:F_dev_2}
\end{align}
Here the last step of Inequalities~(\ref{eq:F_dev}) and (\ref{eq:F_dev_2}) hold by the choice of $\bar\delta_{n,d,\gamma}$ in (\ref{eq:def_bar_delta}).  Identical bounds are obtained for the other terms in the last step of (\ref{eq:F_t_ratio_breakdown}).

Hence, we conclude that, by (\ref{eq:P_wt_Delta_alpha_equal_zero}) and test~(\ref{eq:F_test_2}), for all $t$ such that $g(2n,\gamma)< F_{i|y}(t)<1-g(2n,\gamma)$, we have
\begin{align}
\PP(\wt\Delta\alpha_i(t)=0) &\ge \PP  \left(  \max\left\{ \dfrac{ \max\{ \wh F_{i|0}(t),\wh F_{i|1}(t) \} }{ \min\{ \wh F_{i|0}(t),\wh F_{i|1}(t) \} } ,  \dfrac{ \max\{ 1-\wh F_{i|0}(t),1-\wh F_{i|1}(t) \} }{ \min\{ 1-\wh F_{i|0}(t),1-\wh F_{i|1}(t) \} }  \right\} \le \dfrac{1+\bar\delta_{n,d,\gamma}}{1-\bar\delta_{n,d,\gamma}} \right) \nonumber \\
&\ge 1 - 8 \dfrac{1}{d} n^{-\gamma/2}. \nonumber
\end{align}
Therefore we conclude that (\ref{eq:H_i_prob_S_x_p_c_individual}) holds for $t=x_i$ in the regime specified by (\ref{eq:F_t_large}).  Combining with our earlier display, we conclude that (\ref{eq:H_i_prob_S_x_p_c_individual}) holds for all $x\in\mathbb{R}^d$ and $i\notin S_x''$.

Finally, as stated in the theorem, (\ref{eq:H_i_prob_S_x_p_c_union}) follows from (\ref{eq:H_i_prob_S_x_p_c_individual}) by a union bound argument.
\qed


\subsection{Proof of Theorem~\ref{thm:S_x_p_est_1}}
\label{sec:proof_thm:S_x_p_est_1}

We fix an arbitrary $x\in\mathbb{R}^d$ satisfying Assumption~\ref{ass_x}, and an arbitrary $i\in S_x''$.  We let
$$t = x_i.$$
We first show that test~(\ref{eq:F_test_1}) fails with overwhelming probability.  Assumption~\ref{ass_x}, in particular (\ref{eq:F_t_large_2}), implies that
\begin{align}
8 g(2n,\gamma) \le F_i(t) \le 1 - 8 g(2n,\gamma).
\label{eq:F_t_large_3}
\end{align}
Then, on the one hand, we have $4 g(2n,\gamma)/F_i(t)\le 1/2$ by $(\ref{eq:F_t_large_3})$.  Thus,
\begin{align}
&\PP\left( \wh F_i(t) \le 4 g(2n,\gamma) \right) = \PP\left( \wh F_i(t) \le \dfrac{4 g(2n,\gamma)}{F_i(t)} F_i(t) \right) \le \PP\left( \wh F_i(t) \le \dfrac{1}{2} F_i(t) \right) \nonumber \\
& = \PP\left( \dfrac{1}{2} \left[ \wh F_{i|0}(t) + \wh F_{i|1}(t) \right] \le \dfrac{1}{2} \cdot \dfrac{1}{2} \left[ F_{i|0}(t) + F_{i|1}(t) \right] \right) \le \sum_{ y\in\{0,1\} } \PP\left( \wh F_{i|y}(t) \le \dfrac{1}{2} F_{i|y}(t) \right) \nonumber \\
& \le 2 \exp\left( -\dfrac{1}{8} n F_{i|y}(t) \right) \le 2 \exp\left( - n\cdot g(2n,\gamma) \right). \nonumber
\end{align}
Here, in the third inequality we have used Chernoff bound for relative deviations, and in the last inequality we have used $F_{i|y}(t)\ge 8 g(2n,\gamma)$ as in (\ref{eq:F_t_large_2}).  On the other hand, we also have $1-F_{i|y}(t)\ge 8 g(2n,\gamma)$ by $(\ref{eq:F_t_large_3})$ and so $4 g(2n,\gamma)/(1-F_{i|y}(t))\le 1/2$.  Thus,
\begin{align}
&\PP\left( \wh F_i(t) \ge 1 - 4 g(2n,\gamma) \right) = \PP\left( 1 - \wh F_i(t) \le 4 g(2n,\gamma) \right) = \PP\left( 1 - \wh F_i(t) \le \dfrac{ 4 g(2n,\gamma)}{1 - F_i(t)} ( 1 - F_i(t) ) \right) \nonumber \\
&\le \PP\left( 1 - \wh F_i(t) \le \dfrac{1}{2} ( 1 - F_i(t) ) \right) \le \sum_{ y\in\{0,1\} } \PP\left( 1 - \wh F_{i|y}(t) \le \dfrac{1}{2} ( 1 - F_{i|y}(t) ) \right) \nonumber \\
& \le 2 \exp\left( -\dfrac{1}{8} n (1 -  F_{i|y}(t) ) \right) \le 2 \exp\left( - n\cdot g(2n,\gamma) \right). \nonumber
\end{align}
Here, in the third inequality we have again used Chernoff bound for relative deviations, and in the last inequality we have used $1-F_{i|y}(t)\ge 8 g(2n,\gamma)$ as in (\ref{eq:F_t_large_2}).  Combining the above displays, we conclude that
\begin{align}
\PP\left(\text{test}~(\ref{eq:F_test_1})~\text{fails}\right) \ge 1 - 4 \exp\left( - n\cdot g(2n,\gamma) \right) \ge 1 - n^{-\gamma/2}.
\label{eq:F_test_1_prob_S_x_p}
\end{align}
Here the second inequality follows by Assumption~\ref{ass_n}.

Next we discuss test~(\ref{eq:F_test_2}).  By Assumption~\ref{ass_x}, one of the inequalities (\ref{eq:F_ratio_1}), (\ref{eq:F_ratio_2}), (\ref{eq:F_ratio_3}), (\ref{eq:F_ratio_4}) hold.  First, let's assume that Inequality~(\ref{eq:F_ratio_1}) holds.  For test~(\ref{eq:F_test_2}) to fail, it suffices to have that both
\begin{align}
\wh F_{i|0}(t) \ge (1-\bar\delta_{n,1,\gamma})F_{i|0}(t)
\label{eq:F_dev_3}
\end{align}
and
\begin{align}
\wh F_{i|1}(t) \le (1+\bar\delta_{n,1,\gamma})F_{i|1}(t)
\label{eq:F_dev_4}
\end{align}
hold, because then we have
\begin{align}
\dfrac{ 1+\bar\delta_{n,d,\gamma} }{ 1-\bar\delta_{n,d,\gamma} } &< \dfrac{ (1-\bar\delta_{n,1,\gamma}) F_{i|0}(t) }{ (1+\bar\delta_{n,1,\gamma}) F_{i|1}(t) } \le \dfrac{ \wh F_{i|0}(t) }{ \wh F_{i|1}(t) } = \dfrac{ \max\{ \wh F_{i|0}(t),\wh F_{i|1}(t) \} }{ \min\{ \wh F_{i|0}(t),\wh F_{i|1}(t) \} }. \nonumber
\end{align}
Here the first inequality follows by (\ref{eq:F_ratio_1}), and the second inequality follows by (\ref{eq:F_dev_3}) and (\ref{eq:F_dev_4}).  By similar derivation as Inequalities~(\ref{eq:F_dev}) and (\ref{eq:F_dev_2}) with $\bar\delta_{n,d,\gamma}$ replaced by $\bar\delta_{n,1,\gamma}$, both Inequalities~(\ref{eq:F_dev_3}) and (\ref{eq:F_dev_4}) hold with probabilities at least $1- n^{-\gamma/2}$.  Hence
\begin{align}
\PP \left(  \dfrac{ \max\{ \wh F_{i|0}(t),\wh F_{i|1}(t) \} }{ \min\{ \wh F_{i|0}(t),\wh F_{i|1}(t) \} } > \dfrac{ 1+\bar\delta_{n,d,\gamma} }{ 1-\bar\delta_{n,d,\gamma} } \right) \ge 1- 2 n^{-\gamma/2},
\label{eq:max_ratio_bound_1}.
\end{align}
By a similar derivation, Inequality~(\ref{eq:F_ratio_2}) implies (\ref{eq:max_ratio_bound_1}) as well.  Now, let's assume that Inequality~(\ref{eq:F_ratio_4}) holds.  For test~(\ref{eq:F_test_2}) to fail, it suffices to have that both
\begin{align}
1 - \wh F_{i|1}(t) \ge (1-\bar\delta_{n,1,\gamma}) ( 1 - F_{i|1}(t) )
\label{eq:F_dev_5}
\end{align}
and
\begin{align}
1 - \wh F_{i|0}(t) \le (1+\bar\delta_{n,1,\gamma}) ( 1 - F_{i|0}(t) )
\label{eq:F_dev_6}
\end{align}
hold, because then we have
\begin{align}
\dfrac{ 1+\bar\delta_{n,d,\gamma} }{ 1-\bar\delta_{n,d,\gamma} } & < \dfrac{ (1-\bar\delta_{n,1,\gamma}) (1 - F_{i|1}(t) ) }{ (1+\bar\delta_{n,1,\gamma}) ( 1 - F_{i|0}(t) ) } \le \dfrac{ 1 - \wh F_{i|1}(t) }{ 1 - \wh F_{i|0}(t) } = \dfrac{ \max\{ 1 - \wh F_{i|0}(t), 1 - \wh F_{i|1}(t) \} }{ \min\{ 1 - \wh F_{i|0}(t), 1 - \wh F_{i|1}(t) \} } . \nonumber
\end{align}
By similar derivation as Inequalities~(\ref{eq:F_dev}) and (\ref{eq:F_dev_2}) with $\bar\delta_{n,d,\gamma}$ replaced by $\bar\delta_{n,1,\gamma}$, both Inequalities~(\ref{eq:F_dev_5}) and (\ref{eq:F_dev_6}) hold with probabilities at least $1- n^{-\gamma/2}$.  Hence,
\begin{align}
\PP \left(  \dfrac{ \max\{ 1 - \wh F_{i|0}(t), 1 - \wh F_{i|1}(t) \} }{ \min\{ 1 - \wh F_{i|0}(t), 1 - \wh F_{i|1}(t) \} } > \dfrac{ 1+\bar\delta_{n,d,\gamma} }{ 1-\bar\delta_{n,d,\gamma} } \right) \ge 1- 2 n^{-\gamma/2}.
\label{eq:max_ratio_bound_2}
\end{align}
By a similar derivation, Inequality~(\ref{eq:F_ratio_3}) implies (\ref{eq:max_ratio_bound_2}) as well.

Hence, we conclude that
\begin{align}
\PP\left(\text{test}~(\ref{eq:F_test_2})~\text{fails}\right) \ge 1- 2 n^{-\gamma/2}.
\label{eq:F_test_2_prob_S_x_p}
\end{align}

By (\ref{eq:F_test_1_prob_S_x_p}) and (\ref{eq:F_test_2_prob_S_x_p}), and the fact that if (\ref{eq:F_test_2}) is violated then necessarily $\wh F_{i|0}(t) \neq \wh F_{i|1}(t)$ and so $\wt\Delta\alpha_i(t)\neq 0$ (recall the definition of $\wt\Delta\alpha_i(t)$ as in (\ref{eq:def_wt_Delta_alpha_i_t})), we conclude that
\begin{align}
\PP\left(\wt\Delta\alpha_i(t)\neq 0\right) \ge 1- 3 n^{-\gamma/2}. \nonumber
\end{align}
Therefore we conclude that Inequality~(\ref{eq:H_i_prob_S_x_p_individual}) holds for $t=x_i$.  Then, as stated in the theorem, (\ref{eq:S_x_p_consistency}) follows from (\ref{eq:H_i_prob_S_x_p_individual}) by a union bound argument, and Theorem~\ref{thm:S_x_p_c_est}, in particular (\ref{eq:H_i_prob_S_x_p_c_union}).

Next we prove (\ref{eq:H_x_p_prob}).  Note that
\begin{align}
&\left\{ |\wt\Delta\alpha_i(t) - \Delta\alpha_i(t) | \ge 2\epsilon \right\} \nonumber \\
&= \left( \left\{ |\wt\Delta\alpha_i(t) - \Delta\alpha_i(t) | \ge 2\epsilon \right\}\cap \left\{\wt\Delta\alpha_i(t)=0\right\}\right) \cup \left( \left\{ |\wt\Delta\alpha_i(t) - \Delta\alpha_i(t) | \ge 2\epsilon \right\}\cap \left\{\wt\Delta\alpha_i(t)\neq0\right\}\right) \nonumber \\
&= \left( \left\{ |\wt\Delta\alpha_i(t) - \Delta\alpha_i(t) | \ge 2\epsilon \right\}\cap \left\{\wt\Delta\alpha_i(t)=0\right\}\right) \nonumber \\
& \cup \left( \left\{ | \wh\alpha_{i|0}(t) - \wh\alpha_{i|1}(t) - (\alpha_{i|0}(t) - \alpha_{i|1}(t) ) | \ge 2\epsilon \right\}\cap \left\{\wt\Delta\alpha_i(t)\neq0\right\}\right) \nonumber \\
&\subset \left\{\wt\Delta\alpha_i(t)=0\right\} \cup \left\{ |(\wh\alpha_{i|0}(t) - \wh\alpha_{i|1}(t) ) - (\alpha_{i|0}(t) - \alpha_{i|1}(t) ) | \ge 2\epsilon \right\} \nonumber \\
&\subset \left\{\wt\Delta\alpha_i(t)=0\right\} \cup \left\{ |\wh\alpha_{i|0}(t) - \alpha_{i|0}(t)| \ge \epsilon \right\} \cup \left\{ |\wh\alpha_{i|1}(t) - \alpha_{i|1}(t) | \ge \epsilon \right\}. \nonumber
\end{align}
Hence, by De Morgan's law,
\begin{align}
\left\{ |\wt\Delta\alpha_i(t) - \Delta\alpha_i(t) | < 2\epsilon \right\} \supset \left\{\wt\Delta\alpha_i(t)\neq 0\right\} \cap \left\{ |\wh\alpha_{i|0}(t) - \alpha_{i|0}(t)| < \epsilon \right\} \cap \left\{ |\wh\alpha_{i|1}(t) - \alpha_{i|1}(t) | < \epsilon \right\}, \nonumber
\end{align}
and thus, after taking intersections over $i\in S_x''$, we have
\begin{align}
&\cap_{i\in S_x''} \left( \left\{ |\wh\alpha_{i|0}(x_i)-\alpha_{i|0}(x_i)|<\epsilon \right\} \cap \left\{ |\wh\alpha_{i|1}(x_i)-\alpha_{i|1}(x_i)|<\epsilon \right\} \cap \{ |\wt\Delta\alpha_i(x_i) - \Delta\alpha_i(x_i) | < 2\epsilon \}\right) \nonumber \\
&\supset \cap_{i\in S_x''} \left( \left\{\wt\Delta\alpha_i(x_i)\neq 0\right\} \cap \left\{ |\wh\alpha_{i|0}(x_i) - \alpha_{i|1}(x_i)| < \epsilon \right\} \cap \left\{ |\wh\alpha_{i|1}(x_i) - \alpha_{i|1}(x_i) | < \epsilon \right\} \right).
\label{eq:ineq_intermediate_1}
\end{align}
Set relationship (\ref{eq:ineq_intermediate_1}) further implies that
\begin{align}
H'_{x,\epsilon} \supset \{\wt S_x''=S_x''\} \cap \left( \cap_{i\in S_x''} \cap_{y\in\{0,1\}}  \left\{ |\wh\alpha_{i|y}(x_i) - \alpha_{i|y}(x_i)| < \epsilon \right\}   \right).
\label{eq:H_x_p_intermediate}
\end{align}
Then, Inequality~(\ref{eq:H_x_p_prob}) follows from set relationship~(\ref{eq:H_x_p_intermediate}), Inequality~(\ref{eq:S_x_p_consistency}), Assumption~\ref{ass_x} and the observation following (\ref{eq:B_subset_A_n}) for $i\in S_x''$, $y\in\{0,1\}$, and Assumption~\ref{ass_n}.
\qed


\subsection{Proof of Theorem~\ref{thm_L}}
\label{sec:proof_thm_L}
We fix an arbitrary $i\in s_x'$.  We have
\begin{align}
&\left\{ |\wh\beta_i(x)-\beta^*_i(x)| \le 2 (M-1) \epsilon + 2 J_2 \kappa M s \sqrt{\gamma \log(n)} \lambda_n + 2 J_2 \kappa M s \lambda_n \epsilon \right\} \cap H'_{x,\epsilon} \cap E_n \nonumber \\
&= \left\{ \left| \left[ \wt\Omega - I_d \right]_{i\cdot} \wt\Delta\alpha(x) - \left[ \Omega - I_d \right]_{i\cdot} \Delta\alpha(x) \right| \le  2 (M-1) \epsilon + 2 J_2 \kappa M s \sqrt{\gamma \log(n)} \lambda_n + 2 J_2 \kappa M s \lambda_n \epsilon \right\} \nonumber \\
&\cap H'_{x,\epsilon} \cap E_n \nonumber \\
&\supset \left\{ \left|  \left[ \Omega - I_d \right]_{i\cdot} (\wt\Delta\alpha(x) - \Delta\alpha(x)) \right| \le 2 (M-1) \epsilon \right\} \cap H'_{x,\epsilon} \nonumber \\
&\cap \left\{ \left| \left[ \wt\Omega - \Omega \right]_{i\cdot} \Delta\alpha(x) \right| \le 2 J_2 \kappa M s \sqrt{\gamma \log(n)} \lambda_n \right\} \cap E_n \nonumber \\
&\cap \left\{ \left| \left[ \wt\Omega - \Omega \right]_{i\cdot} (\wt\Delta\alpha(x)-\Delta\alpha(x)) \right| \le 2 J_2 \kappa M s \lambda_n \epsilon \right\} \cap H'_{x,\epsilon} \cap E_n.
\label{eq:proof_thm_L_der_1}
\end{align}
As mentioned earlier, the diagonal elements of $\Omega$ are bounded below by one, and we are assuming $\Omega\in\calU(s,M,\kappa)$; hence, $\|\left[ \Omega - I_d \right]_{i\cdot}\|_{\ell_1}=\|\left[ \Omega \right]_{i\cdot}\|_{\ell_1} -1\le M-1$.  Also note that, for two vectors $u,v\in\mathbb{R}^d$, we have $|u^T v|\le \|u\|_{\ell_1} \|v\|_{\max}$.  Finally, on the event $H'_{x,\epsilon}\subset\{ \wt S_x'' = S_x'' \}$, $|\wt\Delta\alpha_i(x_i) - \Delta\alpha_i(x_i)|$ can be nonzero only if $i\in S_x''$.  Then, from (\ref{eq:proof_thm_L_der_1}), for $n$ large enough,
\begin{align}
&\left\{ |\wh\beta_i(x)-\beta^*_i(x)| \le 2 (M-1) \epsilon + 2 J_2 \kappa M s \sqrt{\gamma \log(n)} \lambda_n + 2 J_2 \kappa M s \lambda_n \epsilon \right\} \cap H'_{x,\epsilon} \cap E_n \nonumber \\
&\supset  \left\{ \left(M-1\right) \max_{i\in S_x''} |\wt\Delta\alpha_i(x_i) - \Delta\alpha_i(x_i)| \le 2 (M-1) \epsilon \right\} \cap H'_{x,\epsilon} \nonumber \\
&\cap \left\{ \| \wt\Omega - \Omega\|_{\infty} \max_{i\in S_x''}|\Delta\alpha_i(x_i)| \le 2 J_2 \kappa M s \sqrt{\gamma \log(n)} \lambda_n \right\} \cap E_n \nonumber \\
&\cap \left\{ \| \wt\Omega - \Omega\|_{\infty} \max_{i\in S_x''} |\wt\Delta\alpha_i(x_i)-\Delta\alpha_i(x_i) | \le 2 J_2 \kappa M s \lambda_n \epsilon \right\} \cap H'_{x,\epsilon} \cap E_n \nonumber \\
&\supset H'_{x,\epsilon} \cap E_n . \nonumber
\end{align}
Here the last set step follows by the definition of $H'_{x,\epsilon}$ as in (\ref{eq:H_x_p}), Proposition~\ref{thm_Omega} regarding $\| \wt\Omega - \Omega\|_{\infty}$ on $E_n$, and the fact that $\max_{i\in S_x''}|\Delta\alpha_i(x_i)|\le\max_{i\in S_x''}(|\alpha_{i|0}(x_i)|+|\alpha_{i|1}(x_i)|)\le 2a_n=2\sqrt{\gamma\log(n)}$, which follows by Assumptions~\ref{ass_x}, in particular (\ref{eq:B_subset_A_n}).  In addition, by the choices of $H'_{x,\epsilon}$, $E_n$ and Proposition~\ref{thm_Omega}, we have, for $n$ large enough,
\begin{align}
H'_{x,\epsilon} \cap E_n &\subset \{ \wt S_x'' = S_x'' \}\cap\{ \sgn(\wt\Omega-I_d) =\sgn(\Omega-I_d) \} = \{\wt S_x' = S_x'\} . \nonumber
\end{align}
Thus, we conclude that
$$ H'_{x,\epsilon} \cap E_n \subset L_{x,\epsilon} $$
and hence
$$\PP(L_{x,\epsilon})\ge\PP(H'_{x,\epsilon} \cap E_n).$$
Then, (\ref{eq:L_prob}) follows from Inequality~(\ref{eq:H_x_p_prob}) in Theorem~\ref{thm:S_x_p_est_1} (which applies because Assumption~\ref{ass_n} holds under Assumption~\ref{ass_d_n} for $n$ large enough) for $H'_{x,\epsilon}$ and Inequality~(\ref{eq:P_Omega_estimation}) in Proposition~\ref{prop:Zhao14_Thm_IV.5} for $E_n$.
\qed


\subsection{Proof of Theorem~\ref{thm_est_copula_part}}
\label{sec:proof_thm_est_copula_part}
We assume that $n$ is large enough.  We have
\begin{align}
& \left| \left[(\wh\alpha_0 + \wh\alpha_1)^T \wh\beta - (\alpha_0 + \alpha_1)^T \beta^*\right](x) \right| \nonumber \\
&\le |(\wh\alpha_0(x) + \wh\alpha_1(x) - \alpha_0(x) - \alpha_1(x))^T \beta^*(x)| +  |(\alpha_0(x) + \alpha_1(x))^T(\wh\beta(x) - \beta^*(x))| \nonumber \\
&+ |(\wh\alpha_0(x) + \wh\alpha_1(x) - \alpha_0(x) - \alpha_1(x))^T(\wh\beta(x) - \beta^*(x))| \nonumber \\
& \le \max_{i\in S_x}\left(|\wh\alpha_{i|0}(x_i) - \alpha_{i|0}(x_i)| + |\wh\alpha_{i|1}(x_i) - \alpha_{i|1}(x_i)|\right) \|\beta^*(x)\|_{\ell_1} \nonumber \\
& + |S_x'\cup \wt S_x'| \max_{i \in S_x'\cup\wt S_x'} | \alpha_{i|0}(x_i) + \alpha_{i|1}(x_i) |  \max_{i \in S_x'\cup\wt S_x'} |\wh\beta_i(x) - \beta^*_i(x)| \nonumber \\
& + |S_x'\cup \wt S_x'| \max_{i \in S_x'\cup\wt S_x'} \left(|\wh\alpha_{i|0}(x_i) - \alpha_{i|0}(x_i)| + |\wh\alpha_{i|1}(x_i) - \alpha_{i|1}(x_i)|\right) \max_{i \in S_x'\cup\wt S_x'} |\wh\beta_i(x) - \beta^*_i(x)| . \nonumber
\end{align}
Here in the second inequality we have invoked (\ref{eq:wh_beta_support}).  Thus, by Theorem~\ref{thm_L}, on the event $L'_{x,\epsilon}$ (on which $\{\wt S_x'=S_x'\}$ through the event $L_{x,\epsilon}$ as defined in (\ref{eq:def_L_x_gamma})), we have from the above
\begin{align}
&\left| \left[(\wh\alpha_0 + \wh\alpha_1)^T \wh\beta - (\alpha_0 + \alpha_1)^T \beta^*\right](x) \right| \nonumber \\
&\le \max_{i\in S_x}\left(|\wh\alpha_{i|0}(x_i) - \alpha_{i|0}(x_i)| + |\wh\alpha_{i|1}(x_i) - \alpha_{i|1}(x_i)|\right) \|\beta^*(x)\|_{\ell_1} \nonumber \\
&+ s_x' \max_{i \in S_x'} | \alpha_{i|0}(x_i) + \alpha_{i|1}(x_i) | \max_{i \in S_x'} |\wh\beta_i(x) - \beta^*_i(x)| \nonumber \\
&+ s_x' \max_{i \in S_x'}\left(|\wh\alpha_{i|0}(x_i) - \alpha_{i|0}(x_i)| + |\wh\alpha_{i|1}(x_i) - \alpha_{i|1}(x_i)|\right) \max_{i \in S_x'} |\wh\beta_i(x) - \beta^*_i(x)| \nonumber \\
&\le 2\epsilon\|\beta^*(x)\|_{\ell_1} + 2 s_x' \sqrt{\gamma\log(n)} \left[ 2 (M-1) \epsilon + 2 J_2 \kappa M s \sqrt{\gamma \log(n)} \lambda_n + 2 J_2 \kappa M s \lambda_n \epsilon \right] \nonumber \\
& + 2 s_x' \epsilon \left[ 2 (M-1) \epsilon + 2 J_2 \kappa M s \sqrt{\gamma \log(n)} \lambda_n + 2 J_2 \kappa M s \lambda_n \epsilon \right]. \nonumber
\end{align}
Here in the second inequality we have invoked Assumption~\ref{ass_x_beta_star}.  Hence, we have shown (\ref{eq:L_p_x_error_bound}).

It remains to establish (\ref{eq:L_p_x_gamma_prob}).  Note that $L'_{x,\epsilon}$ differs from $L_{x,\epsilon}$ by at most a set in the parenthesis on the right hand side of (\ref{eq:def_L_p_x}), which has probability at least
\begin{align}
1 - 4 s_x' \exp\left( - J_1 \dfrac{n^{1-\gamma/2} \epsilon^2}{\sqrt{\gamma\log n}} \right) - 2 s_x' n^{-\gamma/2} \nonumber
\end{align}
by Lemma~\ref{thm:delta_alpha} and Assumptions~\ref{ass_d_n} and \ref{ass_x_beta_star}.  Combining this result with (\ref{eq:L_prob}), Inequality~(\ref{eq:L_p_x_gamma_prob}) then follows.
\qed


\section{Proofs for Section~\ref{sec_naive_bayes}}
\label{sec:proof_for_sec_naive_bayes}

\subsection{Proof of Proposition~\ref{prop:f_rel_dev_variance}}
\label{sec:proof_prop:f_rel_dev_variance}
We have
\begin{align}
\wh f_{i|y}(t) - \EE \wh f_{i|y}(t) = \dfrac{1}{n} \sum_{j=1}^{n} \left\{ \dfrac{1}{h_{n,i}} K_i\left(\dfrac{X^{y,j}_i-t}{h_{n,i}}\right) - \EE \left[ \dfrac{1}{h_{n,i}} K_i\left(\dfrac{X^{y,j}_i-t}{h_{n,i}}\right) \right] \right\}. \nonumber
\end{align}
Note that
\begin{align}
&\VV\left\{ K_i\left(\dfrac{X^{y,j}_i-t}{h_{n,i}}\right) - \EE \left[ K_i\left(\dfrac{X^{y,j}_i-t}{h_{n,i}}\right) \right] \right\} \le \EE\left[ K_i^2\left(\dfrac{X^{y,j}_i-t}{h_{n,i}}\right) \right] \nonumber \\
& = \int K_i^2\left(\dfrac{z-t}{h_{n,i}}\right) f_{i|y}(z) dz = \int_{t-h_{n,i}}^{t+h_{n,i}} K_i^2\left(\dfrac{z-t}{h_{n,i}}\right) f_{i|y}(z) dz \nonumber \\
& \le \int_{t-h_{n,i}}^{t+h_{n,i}} K_i^2\left(\dfrac{z-t}{h_{n,i}}\right) \left[\sup_{z'\in[t-h_{n,i},t+h_{n,i}]} f_{i|y}(z')\right] dz \le 2 \int K_i^2\left(\dfrac{z-t}{h_{n,i}}\right) f_{i|y}(t) dz \nonumber \\
& = 2 f_{i|y}(t) \int K_i^2\left(\dfrac{z-t}{h_{n,i}}\right) dz = 2 \|K_i\|_{L^2}^2 f_{i|y}(t) h_{n,i}. \nonumber
\end{align}
Here the second equality follows from the fact that $K_i$ is supported on $[-1,1]$, and in the third inequality we have invoked (\ref{eq:t_in_B_f}).  Hence, we conclude that
\begin{align}
\VV\left\{ \dfrac{1}{h_{n,i}} K_i\left(\dfrac{X^{y,j}_i-t}{h_{n,i}}\right) - \EE \left[\dfrac{1}{h_{n,i}} K_i\left(\dfrac{X^{y,j}_i-t}{h_{n,i}}\right) \right] \right\} \le 2 \|K_i\|_{L^2}^2 \dfrac{f_{i|y}(t)}{h_{n,i}}.
\label{eq:Bernstein_variance_term}
\end{align}
We also have
\begin{align}
\left| \dfrac{1}{h_{n,i}} K_i\left(\dfrac{X^{y,j}_i-t}{h_{n,i}}\right) - \EE \dfrac{1}{h_{n,i}} K_i\left(\dfrac{X^{y,j}_i-t}{h_{n,i}}\right) \right| \le \dfrac{2 \|K_i\|_{L^{\infty}} }{h_{n,i}}.
\label{eq:Bernstein_bounded_term}
\end{align}
Then, by Bernstein's inequality,
\begin{align}
&\PP\left\{ \dfrac{ |\wh f_{i|y}(t) - \EE \wh f_{i|y}(t)| }{ f_{i|y}(t) } \ge \epsilon' \right\} = \PP\left\{ |\wh f_{i|y}(t) - \EE \wh f_{i|y}(t)| \ge \epsilon' f_{i|y}(t) \right\} \nonumber \\
&\le 2\exp\left(- \dfrac{ n^2 \epsilon'^2 f^2_{i|y}(t) }{ 2 n \VV \left\{ \dfrac{1}{h_{n,i}} K_i\left(\dfrac{X^{y,j}_i-t}{h_{n,i}}\right) - \EE \left[ \dfrac{1}{h_{n,i}} K_i\left(\dfrac{X^{y,j}_i-t}{h_{n,i}}\right) \right] \right\} + \dfrac{4}{3} \|K_i\|_{L^{\infty}} \dfrac{ n \epsilon' f_{i|y}(t) }{h_{n,i}} }\right) \nonumber \\
&\le 2\exp\left(- \dfrac{ n \epsilon'^2 f^2_{i|y}(t) }{ 4 \|K_i\|_{L^2}^2 \dfrac{f_{i|y}(t)}{h_{n,i}}  + \dfrac{4}{3} \|K_i\|_{L^{\infty}} \epsilon' \dfrac{ f_{i|y}(t) }{h_{n,i}}  }\right) \nonumber \\
&\le 2\exp\left(- \dfrac{ 3 }{8\max\left\{ \|K_i\|_{L^{\infty}} \epsilon', 3\|K_i\|_{L^2}^2 \right\}} n \epsilon'^2 f_{i|y}(t) h_{n,i} \right), \nonumber
\end{align}
which is the conclusion of the proposition.
\qed


\subsection{Proof of Theorem~\ref{thm:f_rel_dev}}
\label{sec:proof_thm:f_rel_dev}

We first prove case for the H\"{o}lder class.  We use the decomposition
\begin{align}
\wh f_{i|y}(t) - f_{i|y}(t) = \left[ \wh f_{i|y}(t) - \EE \wh f_{i|y}(t) \right] + \left[ \EE \wh f_{i|y}(t) - f_{i|y}(t) \right]. \nonumber
\end{align}
By standard derivation (e.g., \cite[Proposition~1.2]{TsybakovBook09}), for the bias part, we have
\begin{align}
| \EE \wh f_{i|y}(t) - f_{i|y}(t) | &\le \dfrac{L_i}{l!} h_{n,i}^{\beta_i} \int_{-1}^{1} |K_i(u)| |u^{\beta_i}| du \le 2 \dfrac{L_i}{l!} \|K_i\|_{L^{\infty}} h_{n,i}^{\beta_i} = \dfrac{1}{2^+ C_i^{\beta_i}} h_{n,i}^{\beta_i}.
\label{eq:f_rel_dev_bias}
\end{align}

Next, because $t$ satisfies (\ref{eq:t_in_B_f}), by Proposition~\ref{prop:f_rel_dev_variance}, Inequality~(\ref{eq:f_rel_dev_variance}) holds.  Combining (\ref{eq:f_rel_dev_variance}) and (\ref{eq:f_rel_dev_bias}), we have
\begin{align}
&\PP\left\{ \dfrac{ | \wh f_{i|y}(t) - f_{i|y}(t) | }{ f_{i|y}(t) } \ge \epsilon_n \right\} \nonumber \\
&\le \PP\left\{ \dfrac{|\wh f_{i|y}(t) - \EE \wh f_{i|y}(t)|}{ f_{i|y}(t) } \ge \dfrac{ \epsilon_n }{2} \right\} + \1\left\{ \dfrac{|\EE \wh f_{i|y}(t) - f_{i|y}(t)|}{f_{i|y}(t)} \ge \dfrac{ \epsilon_n }{2} \right\} \nonumber \\
&\le 2 \exp\left(- \dfrac{3}{32\max\left\{ \|K_i\|_{L^{\infty}} \epsilon_n, 3\|K_i\|_{L^2}^2 \right\}} n \epsilon_n^2 f_{i|y}(t) h_{n,i} \right) + \1\left\{ \dfrac{1}{ 2^+ C_i^{\beta_i}} h_{n,i}^{\beta_i} \ge \dfrac{1}{2}\epsilon_n f_{i|y}(t) \right\} \nonumber \\
&\le 2 \exp\left( - \dfrac{3}{32\max\left\{ \|K_i\|_{L^{\infty}} \epsilon_n, 3\|K_i\|_{L^2}^2 \right\}} n \epsilon_n^2 \underline{f_{n,i}} h_{n,i} \right) + \1\left\{ \left(\dfrac{2}{2^+}\right)^{\frac{1}{\beta_i}} h_{n,i} \ge C_i \left(\epsilon_n \underline{f_{n,i}} \right)^{\frac{1}{\beta_i}} \right\} \nonumber \\
&= 2 \exp\left( - \dfrac{3}{32\max\left\{ \|K_i\|_{L^{\infty}} \epsilon_n, 3\|K_i\|_{L^2}^2 \right\}} n \epsilon_n^2 \underline{f_{n,i}} h_{n,i} \right).
\label{eq:f_rel_dev_concrete_1}
\end{align}
Here the last equality follows from (\ref{eq:h_n_i_concrete}).  Then, Inequality~(\ref{eq:f_rel_dev_concrete}) follows from Inequality~(\ref{eq:f_rel_dev_concrete_1}) by the choices (\ref{eq:choice_epsilon}) of $\epsilon_n$, (\ref{eq:f_i_lower_bound}) of $\underline{f_{n,i}}$ with a large enough constant $J_{\beta_i,\gamma,C_d}$, and (\ref{eq:h_n_i_concrete}) of $h_{n,i}$.

Next we prove the case for the super-smooth densities.  By Definition~\ref{def:super_smooth_densities} and the choice (\ref{eq:h_n_i_concrete_super_smooth}) of the bandwidth $h_{n,i}$, we have
\begin{align}
\left| \EE \wh f_{i|y}(t) - f_{i|y}(t) \right| \le c_{\lceil\log(n)\rceil} h_{n,i}^{\lceil\log(n)\rceil} = c_{\lceil\log(n)\rceil} H_i^{\lceil\log(n)\rceil} \log^{-\frac{1}{2}\lceil\log(n)\rceil}(n).
\label{eq:f_rel_dev_bias_super_smooth_1}
\end{align}
Then, from (\ref{eq:f_rel_dev_bias_super_smooth_1}), the assumption $c_{\lceil\log(n)\rceil}\rightarrow 0$ as $n\rightarrow\infty$, and the fact that $\log^{-\frac{1}{2}\lceil\log(n)\rceil}(n)=o(n^{-\epsilon})$ for all $\epsilon>0$, we have
\begin{align}
\1\left\{ \dfrac{|\EE \wh f_{i|y}(t) - f_{i|y}(t)|}{f_{i|y}(t)} \ge \dfrac{ \epsilon_n }{2} \right\} &\le \1\left\{ c_{\lceil\log(n)\rceil} H_i^{\lceil\log(n)\rceil} \log^{-\frac{1}{2}\lceil\log(n)\rceil}(n) \ge \dfrac{1}{2} \epsilon_n \underline{f_{n,i}} \right\} = 0
\label{eq:f_rel_dev_bias_super_smooth_2}
\end{align}
for all $n$ large enough.  Then, replacing the second term in the second line of (\ref{eq:f_rel_dev_concrete_1}) by (\ref{eq:f_rel_dev_bias_super_smooth_2}), and upper bounding the term $\max\left\{ \|K_i\|_{L^{\infty}} \epsilon_n, 3\|K_i\|_{L^2}^2 \right\}$ in the third line of (\ref{eq:f_rel_dev_concrete_1}) by $3\|K^{\lceil\log(n)\rceil}\|_{L^2}^2\le3\lceil\log(n)\rceil$ for $n$ large enough, again yield Inequality~(\ref{eq:f_rel_dev_concrete}).
\qed


\subsection{Proof of Theorem~\ref{thm:S_f_x_c_est}}
\label{sec:proof_thm:S_f_x_c_est}
We fix an arbitrary $x\in\mathbb{R}^d$ satisfying Assumption~\ref{ass_density_x_S_f_x_c}, and an arbitrary $i\notin S^f_x$.  We let
\begin{align}
t=x_i. \nonumber
\end{align}

By the construction of our test, we have
\begin{align}
\PP(\wt\Delta\log f_i(t)=0) \ge \min\left\{ \PP\left(\text{test}~(\ref{eq:density_test_1})~\text{succeeds}\right), \PP\left(\text{test}~(\ref{eq:density_test_2})~\text{succeeds}\right) \right\}.
\label{eq:P_wt_Delta_log_f_equal_zero}
\end{align}

First, suppose that $t$ satisfies
\begin{align}
f_i(t) = f_{i|0}(t) = f_{i|1}(t) <\underline{f_{n,i}}.
\label{eq:density_t_small}
\end{align}
In this case, we focus on test (\ref{eq:density_test_1}).  Note that we would like Inequality~(\ref{eq:density_test_1}) to hold that so that we set $\wt\Delta\log f_i(x_i) = 0$.

For the case of the H\"{o}lder class, as in (\ref{eq:f_rel_dev_bias}), the bias term $\EE \wh f_{i}(t) - f_{i}(t)$ satisfies
\begin{align}
| \EE \wh f_{i}(t) - f_{i}(t) | &\le \sum_{y\in\{0,1\}} \frac{1}{2}| \EE \wh f_{i|y}(t) - f_{i|y}(t) | \le \dfrac{1}{2^+ C_i^{\beta_i}} h_{n,i}^{\beta_i} = \dfrac{1}{2^+} \epsilon_n\underline{f_{n,i}} = o(\underline{f_{n,i}}).
\label{eq:sparsity_density_equal_bias}
\end{align}
Here the first equality follows from our choice (\ref{eq:h_n_i_concrete}) of $h_{n,i}$, and the second equality follows by (\ref{eq:choice_epsilon}).  For the case of super-smooth densities, the conclusion of Inequality~(\ref{eq:sparsity_density_equal_bias}) holds as well by a derivation similar to that of (\ref{eq:f_rel_dev_bias_super_smooth_1}) and (\ref{eq:f_rel_dev_bias_super_smooth_2}).


Next we discuss the variance part.  By Assumption~\ref{ass_density_x_S_f_x_c} and the restriction on $t$ by (\ref{eq:density_t_small}), we have $\sup_{z'\in[t-h_{n,i},t+h_{n,i}]} f_{i|y}(z')\le 2\underline{f_{n,i}}$.  Then, by the derivation of (\ref{eq:Bernstein_variance_term}), we have
\begin{align}
\VV\left[ \dfrac{1}{h_{n,i}} K_i\left(\dfrac{X^{y,j}_i-t}{h_{n,i}}\right) - \EE \dfrac{1}{h_{n,i}} K_i\left(\dfrac{X^{y,j}_i-t}{h_{n,i}}\right) \right] \le 2 \|K_i\|_{L^2}^2 \dfrac{\underline{f_{n,i}}}{h_{n,i}}, \nonumber
\end{align}
and we also recall (\ref{eq:Bernstein_bounded_term}).  Then, by Bernstein's inequality,
\begin{align}
&\PP\left\{ \wh f_{i}(t) - \EE \wh f_{i}(t) > \underline{f_{n,i}} \right\} \le \sum_{y\in\{0,1\}} \PP\left\{ \wh f_{i|y}(t) - \EE \wh f_{i|y}(t) \ge \underline{f_{n,i}} \right\} \nonumber \\
&\le 2 \exp\left(-\dfrac{ C n^2 \underline{f_{n,i}^2} }{ n \VV \left\{ \dfrac{1}{h_{n,i}} K_i\left(\dfrac{X^{y,j}_i-t}{h_{n,i}}\right) - \EE \left[\dfrac{1}{h_{n,i}} K_i\left(\dfrac{X^{y,j}_i-t}{h_{n,i}}\right) \right] \right\}  + \dfrac{\|K_i\|_{L^{\infty}} }{h_{n,i}} n \underline{f_{n,i}} }\right) \nonumber \\
&\le 2 \exp\left(-\dfrac{C }{ \max\{ \|K_i\|_{L^2}^2, \|K_i\|_{L^{\infty}}\} } n \underline{f_{n,i}} h_{n,i} \right) \le \exp\left( - c_i' n^{c_i} \right).
\label{eq:sparsity_density_equal_variance}
\end{align}
for some constants $c_i,c_i'>0$ dependent only on the parameters $\beta_i, \gamma, C_d, L_i, \|K_i\|_{L^2}, \|K_i\|_{L^{\infty}}$.

Hence, we conclude that, for $n$ large enough,
\begin{align}
\PP(\wt\Delta\log f_i(t)=0) &\ge \PP\left( \wh f_{i}(t) \le 3 \underline{f_{n,i}} \right) \ge \PP\left( \wh f_{i}(t) - f_{i}(t) \le 2 \underline{f_{n,i}} \right) \nonumber \\
&\ge \PP\left( \left\{ \wh f_{i}(t) - \EE \wh f_{i}(t) \le \underline{f_{n,i}} \right\} \cap \left\{ | \EE \wh f_{i}(t) - f_{i}(t) | \le \underline{f_{n,i}} \right\} \right) \nonumber \\
&\ge \PP\left( \wh f_{i}(t) - \EE \wh f_{i}(t) \le \underline{f_{n,i}} \right) - \1\left\{ | \EE \wh f_{i}(t) - f_{i}(t) | > \underline{f_{n,i}} \right\} \nonumber \\
&\ge 1 - \exp\left(-c_i' n^{c_i} \right). \nonumber
\end{align}
Here, the first Inequality follows from (\ref{eq:P_wt_Delta_log_f_equal_zero}) and (\ref{eq:density_test_1}), the second inequality follows from (\ref{eq:density_t_small}), and the last inequality follows from (\ref{eq:sparsity_density_equal_bias}) and its counterpart for super-smooth densities, and (\ref{eq:sparsity_density_equal_variance}).  Therefore we conclude that (\ref{eq:G_i_prob_S_f_x_c_individual}) holds for $t=x_i$ specified by the regime~(\ref{eq:density_t_small}).

Next suppose that, in contrast to (\ref{eq:density_t_small}), $t$ is such that
\begin{align}
f_i(t) = f_{i|0}(t) = f_{i|1}(t) \ge \underline{f_{n,i}}.
\label{eq:density_t_large}
\end{align}
In this case, test~(\ref{eq:density_test_1}) is more likely to fail, so we switch to study test~(\ref{eq:density_test_2}).  Note that we set $\wt\Delta\log f_i(t)=0$ (the desirable case) if Inequality~(\ref{eq:density_test_2}) holds.  For brevity we let, for $y\in\{0,1\}$,
$$\delta_y = \wh f_{i|y}(t) - f_{i|y}(t).$$
Using the mean value theorem, we have
\begin{align}
\left| \log \wh f_{i|0}(t) - \log \wh f_{i|1}(t) - \left(\log f_{i|0}(t) - \log f_{i|1}(t) \right) \right| \le \left| \dfrac{\delta_0}{\wt f_{i|0}(t)} \right| + \left| \dfrac{\delta_1}{\wt f_{i|1}(t)} \right|.
\label{eq:log_density_ineq_pre}
\end{align}
Here $\wt f_{i|y}(t)$ is some number sandwiched between $f_{i|y}(t)$ and $\wh f_{i|y}(t)$.  We define the event
\begin{align}
L_{n,i,\gamma,t} = \cap_{y\in\{0,1\}}\left\{ \dfrac{ | \wh f_{i|y}(t) - f_{i|y}(t) | }{ f_{i|y}(t) } < \epsilon_n \right\}.
\label{eq:L_n_i_gamma_t}
\end{align}
Then, we further deduce from (\ref{eq:log_density_ineq_pre}) that, on the event $L_{n,i,\gamma,t}$,
\begin{align}
&\left| \log \wh f_{i|0}(t) - \log \wh f_{i|1}(t) - \left(\log f_{i|0}(t) - \log f_{i|1}(t) \right) \right| \le \dfrac{ |\delta_0| }{ f_{i|0}(t)-|\delta_0| } + \dfrac{ |\delta_1| }{ f_{i|1}(t)-|\delta_1|} \nonumber \\
&= \dfrac{ \dfrac{|\delta_0|}{f_{i|0}(t)} }{ 1-\dfrac{|\delta_0|}{f_{i|0}(t)} } + \dfrac{ \dfrac{|\delta_1|}{f_{i|1}(t)} }{ 1-\dfrac{|\delta_1|}{f_{i|1}(t)} } < 2 \dfrac{\epsilon_n}{1-\epsilon_n} = \tilde\delta_{n,\gamma}
\label{eq:log_f_diff_wt_delta}
\end{align}
(we recall $\tilde\delta_{n,\gamma}$ as defined in (\ref{eq:def_density_delta})).  Therefore, from (\ref{eq:P_wt_Delta_log_f_equal_zero}), (\ref{eq:density_test_2}) and (\ref{eq:log_f_diff_wt_delta}), we have
\begin{align}
&\PP(\wt\Delta\log f_i(t)=0) \ge \PP\left( \left| \log \wh f_{i|0}(t) - \log \wh f_{i|1}(t) \right| < \tilde\delta_{n,\gamma} \right) \nonumber \\
&= \PP\left( \left| \log \wh f_{i|0}(t) - \log \wh f_{i|1}(t) - \left(\log f_{i|0}(t) - \log f_{i|1}(t) \right) \right| < \tilde\delta_{n,\gamma} \right) \ge \PP(L_{n,i,\gamma,t}).
\label{eq:density_test_2_prob_1}
\end{align}
Here the equality follows because in the current case $f_{i|0}(t)= f_{i|1}(t)$.

Because in the current case specified by (\ref{eq:density_t_large}), condition~(\ref{eq:t_in_B_f}) holds for $y\in\{0,1\}$, we can apply Inequality~(\ref{eq:f_rel_dev_concrete}) in Theorem~\ref{thm:f_rel_dev} to conclude that
\begin{align}
\PP(L_{n,i,\gamma,t}) \ge 1 - \dfrac{2}{d} n^{-\gamma/2}.
\label{eq:P_L_n_i_gamma_t}
\end{align}
Therefore, from (\ref{eq:density_test_2_prob_1}) and (\ref{eq:P_L_n_i_gamma_t}), we conclude that (\ref{eq:G_i_prob_S_f_x_c_individual}) holds for $t=x_i$ specified by the regime~(\ref{eq:density_t_large}).  Combining with our earlier display, we conclude that (\ref{eq:G_i_prob_S_f_x_c_individual}) holds.

Finally, as stated in the theorem, (\ref{eq:G_i_prob_S_f_x_c_union}) follows from (\ref{eq:G_i_prob_S_f_x_c_individual}) by a union bound argument.
\qed


\subsection{Proof of Theorem~\ref{thm:est_naive_bayes_part}}
\label{sec:proof_thm:est_naive_bayes_part}

We fix an arbitrary $x\in\mathbb{R}^d$ satisfying Assumptions~\ref{ass_density_x_S_f_x_c} and \ref{ass_density_x_S_f_x}, and an arbitrary $i\in S^f_x$.  We let
$$t = x_i.$$

We have
\begin{align}
&\left\{ \left| \wt\Delta\log f_i(t) - \Delta\log f_i(t) \right| < 2\tilde\delta_{n,\gamma} \right\} \nonumber \\
&\supset\left( \left\{ \left| \wt\Delta\log f_i(t) - \Delta\log f_i(t) \right| < \tilde\delta_{n,\gamma} \right\} \cap \left\{ \wt\Delta\log f_i(t)\neq 0 \right\} \right) \nonumber \\
&\cup \left( \left\{ \left| \wt\Delta\log f_i(t) - \Delta\log f_i(t) \right| < 2\tilde\delta_{n,\gamma} \right\} \cap \left\{ \wt\Delta\log f_i(t)=0 \right\} \right) \nonumber \\
&=\left( \left\{ \left| \log \wh f_{i|0}(t) - \log \wh f_{i|1}(t) - \left(\log f_{i|0}(t) - \log f_{i|1}(t) \right) \right| < \tilde\delta_{n,\gamma} \right\} \cap \left\{ \wt\Delta\log f_i(t)\neq 0 \right\} \right) \nonumber \\
&\cup\left( \left\{ \left| \Delta\log f_i(t) \right| < 2\tilde\delta_{n,\gamma} \right\} \cap \left\{ \wt\Delta\log f_i(t)=0 \right\} \right).
\label{eq:Delta_log_f_decomposition}
\end{align}
We discuss separately the cases
\begin{align}
\left| \Delta\log f_i(t) \right| < 2\tilde\delta_{n,\gamma}
\label{eq:Delta_log_f_small}
\end{align}
and
\begin{align}
\left| \Delta\log f_i(t) \right| \ge 2\tilde\delta_{n,\gamma}.
\label{eq:Delta_log_f_large}
\end{align}
First, we suppose that (\ref{eq:Delta_log_f_small}) holds.  Then, from (\ref{eq:Delta_log_f_decomposition}), we have
\begin{align}
&\left\{ \left| \wt\Delta\log f_i(t) - \Delta\log f_i(t) \right| < 2\tilde\delta_{n,\gamma} \right\} \nonumber \\
&\supset\left( \left\{ \left| \log \wh f_{i|0}(t) - \log \wh f_{i|1}(t) - \left(\log f_{i|0}(t) - \log f_{i|1}(t) \right) \right| < \tilde\delta_{n,\gamma} \right\} \cap \left\{ \wt\Delta\log f_i(t)\neq 0 \right\} \right) \nonumber \\
&\cup \left\{ \wt\Delta\log f_i(t)=0 \right\} \nonumber \\
&\supset \left( \left\{ \left| \log \wh f_{i|0}(t) - \log \wh f_{i|1}(t) - \left(\log f_{i|0}(t) - \log f_{i|1}(t) \right) \right| < \tilde\delta_{n,\gamma} \right\} \cap \left\{ \wt\Delta\log f_i(t)\neq 0 \right\} \right) \nonumber \\
&\cup \left( \left\{ \left| \log \wh f_{i|0}(t) - \log \wh f_{i|1}(t) - \left(\log f_{i|0}(t) - \log f_{i|1}(t) \right) \right| < \tilde\delta_{n,\gamma} \right\} \cap \left\{ \wt\Delta\log f_i(t)= 0 \right\} \right) \nonumber \\
&= \left\{ \left| \log \wh f_{i|0}(t) - \log \wh f_{i|1}(t) - \left(\log f_{i|0}(t) - \log f_{i|1}(t) \right) \right| < \tilde\delta_{n,\gamma} \right\} \supset L_{n,i,\gamma,t}.
\label{eq:Delta_log_f_event_small}
\end{align}
Here the last step follows because (\ref{eq:log_f_diff_wt_delta}) holds on the event $L_{n,i,\gamma,t}$ introduced in (\ref{eq:L_n_i_gamma_t}).

Next, suppose that (\ref{eq:Delta_log_f_large}) holds instead of (\ref{eq:Delta_log_f_small}).  Then, from (\ref{eq:Delta_log_f_decomposition}), we have
\begin{align}
&\left\{ \left| \wt\Delta\log f_i(t) - \Delta\log f_i(t) \right| < 2 \tilde\delta_{n,\gamma} \right\} \nonumber \\
&\supset \left\{ \left| \log \wh f_{i|0}(t) - \log \wh f_{i|1}(t) - \left(\log f_{i|0}(t) - \log f_{i|1}(t) \right) \right| < \tilde\delta_{n,\gamma} \right\} \cap \left\{ \wt\Delta\log f_i(t)\neq 0 \right\} \nonumber \\
&\supset \left\{ \left| \log \wh f_{i|0}(t) - \log \wh f_{i|1}(t) - \left(\log f_{i|0}(t) - \log f_{i|1}(t) \right) \right| < \tilde\delta_{n,\gamma} \right\} \cap \left\{ \wh f_i(t) > 3 \underline{f_{n,i}} \right\} \nonumber \\
&\supset L_{n,i,\gamma,t} \cap \left\{ \wh f_i(t) > 3 \underline{f_{n,i}} \right\} = L_{n,i,\gamma,t} \cap L_{n,i,\gamma,t}'.
\label{eq:Delta_log_f_event_large}
\end{align}
Here the second step follows because, by tests (\ref{eq:density_test_1}) and (\ref{eq:density_test_2}), the dual conditions $\wh f_i(t) > 3 \underline{f_{n,i}}$, and $\left| \log \wh f_{i|0}(t) - \log \wh f_{i|1}(t) - \left(\log f_{i|0}(t) - \log f_{i|1}(t) \right) \right| < \tilde\delta_{n,\gamma}$ when (\ref{eq:Delta_log_f_large}) holds, together implies $\wt\Delta\log f_i(t)\neq 0$, and in the last step we have introduced the event
\begin{align}
L_{n,i,\gamma,t}' = \left\{\wh f_i(t) > 3 \underline{f_{n,i}} \right\}. \nonumber
\end{align}
We have
\begin{align}
&L_{n,i,\gamma,t}'^c \subset \cup_{ y\in\{0,1\} } \left\{ \wh f_{i|y}(t) \le  3 \underline{f_{n,i}} \right\} \subset \cup_{ y\in\{0,1\} } \left\{ \wh f_{i|y}(t) \le  ( 1-\epsilon_n ) f_{i|y}(t) \right\} \nonumber \\
&= \cup_{ y\in\{0,1\} } \left\{ \wh f_{i|y}(t) - f_{i|y}(t) \le  -\epsilon_n f_{i|y}(t) \right\} \subset \cup_{y\in\{0,1\}} \left\{ | \wh f_{i|y}(t) - f_{i|y}(t) | \ge  \epsilon_n f_{i|y}(t) \right\}. \nonumber
\end{align}
Here the second inequality follows from (\ref{eq:density_t_large_2}).  The above implies that
\begin{align}
&L_{n,i,\gamma,t}' \supset \cap_{ y\in\{0,1\} } \left\{ | \wh f_{i|y}(t) - f_{i|y}(t) | <  \epsilon_n f_{i|y}(t) \right\} = L_{n,i,\gamma,t}, \nonumber
\end{align}
which, together with (\ref{eq:Delta_log_f_event_large}), further implies that, for the case (\ref{eq:Delta_log_f_large}),
\begin{align}
\left\{ \left| \wt\Delta\log f_i(t) - \Delta\log f_i(t) \right| < 2 \tilde\delta_{n,\gamma} \right\} \supset L_{n,i,\gamma,t}.
\label{eq:Delta_log_f_event_large_2}
\end{align}

Combining (\ref{eq:Delta_log_f_event_small}) and (\ref{eq:Delta_log_f_event_large_2}) for the cases (\ref{eq:Delta_log_f_small}) and (\ref{eq:Delta_log_f_large}) respectively, and taking the intersection over $i\in S_x^f$, we conclude that
\begin{align}
\cap_{i\in S_x^f} \left\{ \left| \wt\Delta\log f_i(x_i) - \Delta\log f_i(x_i) \right| \le 2 \tilde\delta_{n,\gamma} \right\} \supset \cap_{i\in S_x^f} L_{n,i,\gamma,x_i} \nonumber
\end{align}
which further implies
\begin{align}
L^{\text{bayes}}_{x,n} \supset \left\{\wh S^f_x \subset S^f_x \right\} \cap \left( \cap_{i\in S_x^f} L_{n,i,\gamma,x_i} \right).
\label{eq:G_x_p_intermediate}
\end{align}
Then, Inequality~(\ref{eq:L_n_b_x_n_gamma_prob}) follows from set relationship~(\ref{eq:G_x_p_intermediate}), Inequality~(\ref{eq:G_i_prob_S_f_x_c_union}) in Theorem~\ref{thm:S_f_x_c_est}, and Inequality~(\ref{eq:P_L_n_i_gamma_t}) for $t=x_i$ with $i\in S_x^f$ (which holds because (\ref{eq:t_in_B_f}) holds for $t=x_i$ with $i\in S_x^f$ and for $y\in\{0,1\}$ by Assumption~\ref{ass_density_x_S_f_x}).


Finally we prove (\ref{eq:error_bound_bayes}) on the event $L^{\text{bayes}}_{x,n}$.  On this event we have
\begin{align}
\left\| \left[\wt \Delta\log f - \Delta\log f\right](x) \right\|_{\ell_1} &= \sum_{i\in S^f_x} \left|\wt \Delta\log f_i(x_i) - \Delta\log f_i(x_i) \right| \le 2 s^f_x \tilde\delta_{n,\gamma}. \nonumber
\end{align}
Here the equality follows because $\wh S^f_x \subset S^f_x$, and the inequality follows because for all $i\in S^f_x$ we have $\left|\wt \Delta\log f_i(x_i) - \Delta\log f_i(x_i) \right| \le 2 \tilde\delta_{n,\gamma}$, all by the definition of $L^{\text{bayes}}_{x,n}$ as in (\ref{eq:L_n_b_x_n_gamma}).
\qed


\section*{Acknowledgements}

This research is supported in part by NSF Grant DMS~1310119.

\bigskip

\appendix

\section{Auxiliary proofs}

\subsection{Proof of Proposition~\ref{prop:basic_facts_Phi_Phi_inv}}
\label{sec:proof_prop:basic_facts_Phi_Phi_inv}

(\ref{eq:Phi_phi}) is well known, see for instance Inequality~(9) in \cite{Gordon41}.  Result analogous to (\ref{eq:Phi_inv}) in terms of the closely related complementary error function is well known too; here for completeness we give the derivation of (\ref{eq:Phi_inv}).  Starting from Equation~(2) and Inequality~(5) in \cite{Chi03}, we have
\begin{align}
1-\Phi(x)\le\dfrac{1}{2}e^{-x^2/2}, \nonumber
\end{align}
which further implies that
\begin{align}
\log(2(1-\Phi(x)))\le-x^2/2 \Rightarrow x\le\sqrt{2\log\dfrac{1}{2(1-\Phi(x))}} \Rightarrow \Phi^{-1}(x)\le\sqrt{2\log\dfrac{1}{2(1-x)}}. \nonumber
\end{align}
\qed

\subsection{Proof of Proposition~\ref{prop:A_n_prob_bound}}
\label{sec:proof_prop:A_n_prob_bound}
We let $t$ be such that $\alpha_{i|y}(t)=a_n$.  We have
\begin{align}
g(n,\gamma) = \dfrac{1}{2 a_n}\phi(a_n)\le \dfrac{a_n}{1+a_n^2}\phi(a_n) & \le 1 - \Phi(a_n) \le \dfrac{1}{a_n}\phi(a_n) = 2 g(n,\gamma)
\label{eq:prop:A_n_prob_bound_1}
\end{align}
Here the first inequality follows because $a_n\ge 1$ by assumption, the second and third inequalities follow by (\ref{eq:Phi_phi}).  Then, substituting $\Phi(a_n) = \Phi(\alpha_{i|y}(t)) = F_{i|y}(t)$ into (\ref{eq:prop:A_n_prob_bound_1}) yields (\ref{eq:F_edge_bound}).

By symmetry and (\ref{eq:F_edge_bound}), we have that, for $t$ be such that $\alpha_{i|y}(t)=-a_n$,
\begin{align}
g(n,\gamma)\le F_{i|y}(t) \le 2g(n,\gamma).
\label{eq:prop:A_n_prob_bound_2}
\end{align}
Then, (\ref{eq:F_bound}) follows from the first halfs of (\ref{eq:F_edge_bound}) and (\ref{eq:prop:A_n_prob_bound_2}), and the monotonicity of $\alpha_{i|y}$ and $F_{i|y}$.
\qed


\subsection{The margin assumption for Gaussian classification}
\label{sec:margin_assumption_Gaussian}
In this section we consider the margin assumption for classifying two Gaussian distributions with the same covariance matrix $\Sigma\in\mathbb{R}^{d\times d}$.  Without loss of generality we assume that $(X|Y=0)\sim N(0,\Sigma)$, $(X|Y=1)\sim N(\mu,\Sigma)$ for some $\mu\in\mathbb{R}^d$.  It is straightforward to derive that, for $x\in\mathbb{R}^d$,
\begin{align}
|\log(f^0/f^1)(x)| =|\mu^T \Sigma^{-1} x - \mu^T \Sigma^{-1}\mu/2|.
\label{eq:margin_assumption_Gaussian_1}
\end{align}
Note that $( \mu^T \Sigma^{-1} X |Y=0)\sim N(0, \mu^T \Sigma^{-1} \mu)$.  Hence, we have
\begin{align}
\PP(0<|\log(f^0/f^1)(X)|\le t) &= \PP(0<|\log(f^0/f^1)(X)|\le t |Y=0) \nonumber \\
&= \PP(0<|\mu^T \Sigma^{-1} X - \mu^T \Sigma^{-1}\mu/2)|\le t |Y=0) \nonumber \\
&\le \dfrac{2}{\sqrt{2\pi \mu^T \Sigma^{-1} \mu} } t. \nonumber
\end{align}
Here the first equality follows by symmetry, the second equality follows by (\ref{eq:margin_assumption_Gaussian_1}), and the inequality follows because the density of the $N(0, \mu^T \Sigma^{-1} \mu)$ distribution is bounded above by $1/\sqrt{2\pi \mu^T \Sigma^{-1} \mu}$.  Hence we conclude from the above that in this case the margin assumption, i.e., Assumption~\ref{assumption:margin_condition}, is fulfilled with $\alpha=1$.

\bibliographystyle{plain}
\bibliography{sigma_short_journal_title}

\begin{thebibliography}{10}

\bibitem{Audibert07}
Jean-Yves Audibert and Alexandre~B. Tsybakov.
\newblock Fast learning rates for plug-in classifiers.
\newblock {\em Ann. Statist.}, 35(2):608--633, 2007.

\bibitem{Bickel04}
Peter~J. Bickel and Elizaveta Levina.
\newblock Some theory for {F}isher's linear discriminant function, `naive
  bayes', and some alternatives when there are many more variables than
  observations.
\newblock {\em Bernoulli}, 10(6):989–1010, 2004.

\bibitem{Cai11}
T.~Tony Cai and Weidong Liu.
\newblock A direct estimation approach to sparse linear discriminant analysis.
\newblock {\em J. Amer. Statist. Assoc.}, 106(496):1566--1577, 2011.

\bibitem{Cai14}
T.~Tony Cai, Zhao Ren, and Harrison~H. Zhou.
\newblock Estimating structured high-dimensional covariance and precision
  matrices: Optimal rates and adaptive estimation.
\newblock Technical report, University of Pennsylvania, University of
  Pittsburgh and Yale University, 2014.

\bibitem{Chi03}
Marco Chiani, Davide Dardari, and Marvin~K. Simon.
\newblock New exponential bounds and approximations for the computation of
  error probability in fading channels.
\newblock {\em Trans. Wireless. Comm.}, 2(4):840--845, 2003.

\bibitem{Clemmensen11}
Line Clemmensen, Trevor Hastie, Daniela Witten, and Bjarne Ersb{\o}ll.
\newblock Sparse discriminant analysis.
\newblock {\em Technometrics}, 53(4):406--413, 2011.

\bibitem{Devroye92}
Luc Devroye.
\newblock A note on the usefulness of superkernels in density estimation.
\newblock {\em Ann. Statist.}, 20(4):2037--2056, 1992.

\bibitem{Devroye96}
Luc Devroye, L\'{a}szl\'{o} Gy\"{o}rfi, and G\'{a}bor Lugosi.
\newblock {\em A Probabilistic Theory of Pattern Recognition}.
\newblock Springer, 1996.

\bibitem{Erdelyi53}
Arthur Erd\'{e}lyi.
\newblock {\em Higher Transcendental Functions}.
\newblock McGraw-Hill Book Company, Inc., 1953.

\bibitem{Fan12}
Jianqing Fan, Yang Feng, and Xin Tong.
\newblock A road to classification in high dimensional space: the regularized
  optimal affine discriminant.
\newblock {\em J. R. Stat. Soc. Ser. B Stat. Methodol.}, 74(4):745–771, 2012.

\bibitem{Gordon41}
Robert~D. Gordon.
\newblock Values of mills' ratio of area to bounding ordinate and of the normal
  probability integral for large values of the argument.
\newblock {\em Ann. Math. Statist.}, 12(3):364--366, 1941.

\bibitem{Hagerup90}
Torben Hagerup and Christine R\"{u}b.
\newblock A guided tour of {C}hernoff bounds.
\newblock {\em Information Processing Letters}, 33(6):305--308, 1990.

\bibitem{HCL13}
Fang Han and Han Liu.
\newblock Optimal rates of convergence for latent generalized correlation
  matrix estimation in transelliptical distribution.
\newblock {\em CoRR}, abs/1305.6916, 2013.

\bibitem{Han13}
Fang Han, Tuo Zhao, and Han Liu.
\newblock {CODA}: high dimensional copula discriminant analysis.
\newblock {\em J. Mach. Learn. Res.}, 14:629−671, 2013.

\bibitem{Abramowitz72}
Urs~W. Hochstrasser.
\newblock Orthogonal polynomials.
\newblock In Milton Abramowitz and Irene~A. Stegun, editors, {\em Handbook of
  Mathematical Functions: with Formulas, Graphs, and Mathematical Tables},
  pages 771--792. National Bureau of Standards, 1972.

\bibitem{KolarLiu13}
Mladen Kolar and Han Liu.
\newblock Optimal feature selection in high-dimensional discriminant analysis.
\newblock {\em CoRR}, abs/1306.6557, 2013.

\bibitem{Lin03}
Y.~Lin and Y.~Jeon.
\newblock Discriminant analysis through a semiparametric model.
\newblock {\em Biometrika}, 90(2):379--392, 2003.

\bibitem{Liu12}
Han Liu, Fang Han, Ming Yuan, John Lafferty, and Larry Wasserman.
\newblock High dimensional semiparametric {G}aussian copula graphical models.
\newblock {\em Ann. Statist.}, 40(4):2293--2326, 2012.

\bibitem{Mai12}
Qing Mai and Hui Zou.
\newblock A direct approach to sparse discriminant analysis in ultra-high
  dimensions.
\newblock {\em Biometrika}, 99:29--42, 2012.

\bibitem{Mai13}
Qing Mai and Hui Zou.
\newblock Semiparametric sparse discriminant analysis in ultra-high dimensions.
\newblock {\em CoRR}, abs/1304.4983, 2013.

\bibitem{McLachlan92}
Geoffrey~J. McLachlan.
\newblock {\em Discriminant Analysis and Statistical Pattern Recognition}.
\newblock John Wiley \& Sons, 1992.

\bibitem{Mitra14}
Ritwik Mitra and Cun-Hui Zhang.
\newblock Multivariate analysis of nonparametric estimates of large correlation
  matrices.
\newblock {\em CoRR}, abs/1403.6195, 2014.

\bibitem{Ravikumar11}
Pradeep Ravikumar, Martin~J. Wainwright, Garvesh Raskutti, and Bin Yu.
\newblock High-dimensional covariance estimation by minimizing
  $\ell_1$-penalized log-determinant divergence.
\newblock {\em Electron. J. Statist.}, 5:935--980, 2011.

\bibitem{Ren13}
Zhao Ren, Tingni Sun, Cun-Hui Zhang, and Harrison~H. Zhou.
\newblock Asymptotic normality and optimalities in estimation of large gaussian
  graphical model.
\newblock {\em CoRR}, abs/1309.6024, 2013.

\bibitem{Shao11}
Jun Shao, Yazhen Wang, Xinwei Deng, and Sijian Wang.
\newblock Sparse linear discriminant analysis by thresholding for high
  dimensional data.
\newblock {\em Ann. Statist.}, 39(2):1241--1265, 2011.

\bibitem{Shorack09}
Galen~R. Shorack and Jon~A. Wellner.
\newblock {\em Empirical Processes with Applications to Statistics}.
\newblock Society for Industrial and Applied Mathematics, 2001.

\bibitem{TsybakovBook09}
Alexandre~B. Tsybakov.
\newblock {\em Introduction to Nonparametric Estimation}.
\newblock Springer, 2009.

\bibitem{WZ14}
Marten~H. Wegkamp and Yue Zhao.
\newblock Adaptive estimation of the copula correlation matrix for
  semiparametric elliptical copulas.
\newblock {\em CoRR}, abs/1305.6526, 2014.

\bibitem{Xue12b}
Lingzhou Xue and Hui Zou.
\newblock Regularized rank-based estimation of high-dimensional nonparanormal
  graphical models.
\newblock {\em Ann. Statist.}, 40(5):2541--2571, 2012.

\bibitem{Zhao14}
Tuo Zhao and Han Liu.
\newblock Calibrated precision matrix estimation for high dimensional
  elliptical distributions.
\newblock {\em {IEEE} Trans. Inf. Theory}, PP(99):1, 2014.

\end{thebibliography}

\end{document}